\newcommand{\klockan}{\the\hours:{\ifnum\minutes<10 0\fi}\the\minutes}
\newcommand{\tid}{\today\ \klockan}
\newcommand{\prtid}{\smash{\raise 10mm \hbox{\LaTeX ed \tid}}}
\renewcommand{\prtid}{}
\def\sectionmark#1{} 
\def\subsectionmark#1{}
\newcommand{\sectnr}{\ifnum \c@secnumdepth >\z@
                 \thesection.\hskip 1em\relax \fi}
\def\@evenhead{\footnotesize\rm\thepage\hfil\leftmark\hfil\llap{\prtid}}
\def\@oddhead{\footnotesize\rm\rlap{\prtid}\hfil\rightmark\hfil\thepage}
\def\tableofcontents{\section*{Contents} 
 \@starttoc{toc}}
\def\@biblabel#1{#1.}
\let\Thebibliography=\thebibliography
\renewcommand{\thebibliography}[1]{\def\@mkboth##1##2{}\Thebibliography{#1}
\addcontentsline{toc}{section}{References}
\frenchspacing 
\setlength{\@topsep}{0pt}
\setlength{\itemsep}{0pt}%
\setlength{\parskip}{0pt plus 2pt}%
}
\def\mdots@{\mathinner.\nonscript\!.%
 \ifx\next,.\else\ifx\next;.\else\ifx\next..\else
 \nonscript\!\mathinner.\fi\fi\fi}
\let\ldots\mdots@
\let\cdots\mdots@
\let\dotso\mdots@
\let\dotsb\mdots@
\let\dotsm\mdots@
\let\dotsc\mdots@
\def\vdots{\vbox{\baselineskip2.8\p@ \lineskiplimit\z@
    \kern6\p@\hbox{.}\hbox{.}\hbox{.}\kern3\p@}}
\def\ddots{\mathinner{\mkern1mu\raise8.6\p@\vbox{\kern7\p@\hbox{.}}%
    \raise5.8\p@\hbox{.}\raise3\p@\hbox{.}\mkern1mu}}
\let\Enumerate=\enumerate
\renewcommand{\enumerate}{\Enumerate%
\setlength{\@topsep}{0pt}
\setlength{\itemsep}{0pt}%
\setlength{\parskip}{0pt plus 1pt}%
\renewcommand{\theenumi}{\textup{(\alph{enumi})}}%
\renewcommand{\labelenumi}{\theenumi}%
}
\let\endEnumerate=\endenumerate
\renewcommand{\endenumerate}{\endEnumerate\unskip}
\def\@seccntformat#1{\csname the#1\endcsname.\quad}
\newcommand{\authortitle}[2]{\author{#1}\title{#2}\markboth{#1}{#2}}
\newcommand{\auth}[2]{{#1, #2.}}
\newcommand{\art}[6]{{\sc #1, \rm #2, \it #3\/ \bf #4 \rm (#5), \mbox{#6}.}}
\newcommand{\artin}[3]{{\sc #1, \rm #2, in #3.}}
\newcommand{\artprep}[3]{{\sc #1, \rm #2, \rm #3.}}
\newcommand{\book}[3]{{\sc #1, \it #2, \rm #3.}}
\newcommand{\AND}{{\rm and }}
\newtheoremstyle{descriptive}%
  {\topsep}   
  {\topsep}   
  {\rmfamily} 
  {}          
  {\bfseries} 
  {.}         
  { }         
  {}          
\newtheoremstyle{propositional}%
  {\topsep}   
  {\topsep}   
  {\itshape}  
  {}          
  {\bfseries} 
  {.}         
  { }         
  {}          
\theoremstyle{propositional}
\newtheorem{thm}{Theorem}[section]
\newtheorem{prop}[thm]{Proposition}
\newtheorem{lem}[thm]{Lemma}
\newtheorem{cor}[thm]{Corollary}
\theoremstyle{descriptive}
\newtheorem{deff}[thm]{Definition}
\newtheorem{example}[thm]{Example}
\newtheorem{remark}[thm]{Remark}
\renewenvironment{proof}[1][\proofname]{\par
  \pushQED{\qed}%
  \normalfont
  \trivlist
  \item[\hskip\labelsep
        \itshape
    #1\@addpunct{.}]\ignorespaces
}{%
  \popQED\endtrivlist\@endpefalse
}
\newdimen\pl@left
\newdimen\pl@down
\newdimen\pl@right
\newdimen\pl@temp
\def\sob#1#2#3#4#5{
  \setbox0\hbox{#1}\setbox1\hbox{$_\mathchar'454$}\setbox2\hbox{p}%
  \pl@right=#2\wd0 \advance\pl@right by-#3\wd1
  \pl@down=#5\ht1 \advance\pl@down by-#4\ht0
  \pl@left=\pl@right \advance\pl@left by\wd1
  \pl@temp=-\pl@down \advance\pl@temp by\dp2 \dp1=\pl@temp
  \leavevmode
  \kern\pl@right\lower\pl@down\box1\kern-\pl@left #1}
\def\aob{\sob a{.66}{.20}{0}{.90}}
\newcommand{\setm}{\setminus}
\renewcommand{\subsetneq}{\varsubsetneq}
\renewcommand{\emptyset}{\varnothing}
\def\vint{\mathop{\mathchoice%
          {\setbox0\hbox{$\displaystyle\intop$}\kern 0.22\wd0%
           \vcenter{\hrule width 0.6\wd0}\kern -0.82\wd0}%
          {\setbox0\hbox{$\textstyle\intop$}\kern 0.2\wd0%
           \vcenter{\hrule width 0.6\wd0}\kern -0.8\wd0}%
          {\setbox0\hbox{$\scriptstyle\intop$}\kern 0.2\wd0%
           \vcenter{\hrule width 0.6\wd0}\kern -0.8\wd0}%
          {\setbox0\hbox{$\scriptscriptstyle\intop$}\kern 0.2\wd0%
           \vcenter{\hrule width 0.6\wd0}\kern -0.8\wd0}}%
          \mathopen{}\int}
\newcommand{\ConeX}{{C_1^X}}
\newcommand{\ConeXhat}{{C_1^{\Xhat}}}
\newcommand{\ConeXt}{{C_1^{\Xt}}}
\newcommand{\ConeGx}{{C_1^{G_x}}}
\newcommand{\ConeOm}{{C_1^\Om}}
\newcommand{\CpG}{{C_p^G}}
\newcommand{\CpRtwo}{{C_p^{\R^2}}}
\newcommand{\CpRn}{{C_p^{\R^n}}}
\newcommand{\CpclGj}{{\displaystyle C_p^{\clGj}}}
\DeclareMathOperator{\capp}{cap}
\newcommand{\cpX}{\capp_p^X}
\newcommand{\cpclGj}{\mathop{\displaystyle \capp_p^{\clG_j}}}
\newcommand{\coneY}{\capp_1^Y}
\newcommand{\cpRn}{\capp_p^{\R^n}}
\DeclareMathOperator{\dist}{dist}
\newcommand{\bdy}{\partial}
\newcommand{\loc}{_{\rm loc}}
\DeclareMathOperator{\Mod}{Mod}
\newcommand{\Modp}{{\Mod_p}}
\DeclareMathOperator{\ACL}{ACL}
\DeclareMathOperator{\length}{length}
\DeclareMathOperator{\fineint}{fine-int}
\DeclareMathOperator{\interior}{int}
{\catcode`p =12 \catcode`t =12 \gdef\eeaa#1pt{#1}}      
\def\accentadjtext#1{\setbox0\hbox{$#1$}\kern   
                \expandafter\eeaa\the\fontdimen1\textfont1 \ht0 }
\def\accentadjscript#1{\setbox0\hbox{$#1$}\kern 
                \expandafter\eeaa\the\fontdimen1\scriptfont1 \ht0 }
\def\accentadjscriptscript#1{\setbox0\hbox{$#1$}\kern   
                \expandafter\eeaa\the\fontdimen1\scriptscriptfont1 \ht0 }
\def\accentadjtextback#1{\setbox0\hbox{$#1$}\kern       
                -\expandafter\eeaa\the\fontdimen1\textfont1 \ht0 }
\def\accentadjscriptback#1{\setbox0\hbox{$#1$}\kern     
                -\expandafter\eeaa\the\fontdimen1\scriptfont1 \ht0 }
\def\accentadjscriptscriptback#1{\setbox0\hbox{$#1$}\kern 
                -\expandafter\eeaa\the\fontdimen1\scriptscriptfont1 \ht0 }
\def\itoverline#1{{\mathsurround0pt\mathchoice
        {\rlap{$\accentadjtext{\displaystyle #1}
                \accentadjtext{\vrule height1.593pt}
                \overline{\phantom{\displaystyle #1}
                \accentadjtextback{\displaystyle #1}}$}{#1}}
        {\rlap{$\accentadjtext{\textstyle #1}
                \accentadjtext{\vrule height1.593pt}
                \overline{\phantom{\textstyle #1}
                \accentadjtextback{\textstyle #1}}$}{#1}}
        {\rlap{$\accentadjscript{\scriptstyle #1}
                \accentadjscript{\vrule height1.593pt}
                \overline{\phantom{\scriptstyle #1}
                \accentadjscriptback{\scriptstyle #1}}$}{#1}}
        {\rlap{$\accentadjscriptscript{\scriptscriptstyle #1}
                \accentadjscriptscript{\vrule height1.593pt}
                \overline{\phantom{\scriptscriptstyle #1}
                \accentadjscriptscriptback{\scriptscriptstyle #1}}$}{#1}}}}
\newdimen\extrawidth
\def\iintlim#1#2{\setbox0\hbox{$\scriptstyle#1$}%
	\setbox1\hbox{$\scriptstyle#2$}%
	\extrawidth=\wd1 \advance\extrawidth-\wd0
	\ifdim\extrawidth<0pt \extrawidth=0pt\fi%
	\int_{#1\kern\extrawidth \kern .5em}^{#2\kern -\wd1} \kern -.5em%
}
\newcommand{\al}{\alpha}
\newcommand{\be}{\beta}
\newcommand{\ga}{\gamma}
\newcommand{\gah}{\widehat{\gamma}}
\newcommand{\eps}{\varepsilon}
\newcommand{\la}{\lambda}
\newcommand{\de}{\delta}
\newcommand{\Om}{\Omega}
\renewcommand{\phi}{\varphi}
\newcommand{\p}{{$p\mspace{1mu}$}}
\newcommand{\R}{\mathbf{R}}
\newcommand{\eR}{{\overline{\R}}}
\newcommand{\Q}{\mathbf{Q}}
\newcommand{\limplus}{{\mathchoice{\vcenter{\hbox{$\scriptstyle +$}}}
  {\vcenter{\hbox{$\scriptstyle +$}}}
  {\vcenter{\hbox{$\scriptscriptstyle +$}}}
  {\vcenter{\hbox{$\scriptscriptstyle +$}}}
}}
\newcommand{\limminus}{{\mathchoice{\vcenter{\hbox{$\scriptstyle -$}}}
  {\vcenter{\hbox{$\scriptstyle -$}}}
  {\vcenter{\hbox{$\scriptscriptstyle -$}}}
  {\vcenter{\hbox{$\scriptscriptstyle -$}}}
}}
\newcommand{\None}{N^{1,1}}
\newcommand{\Noneloc}{N^{1,1}\loc}
\newcommand{\hNp}{\widehat{N}^{1,p}}
\newcommand{\CpXhat}{{C_p^{\Xhat}}}
\newcommand{\Dp}{D^p}
\newcommand{\Nploc}{N^{1,p}\loc}
\newcommand{\Dploc}{D^{p}\loc}
\newcommand{\Np}{N^{1,p}}
\newcommand{\CpX}{{C_p^X}}
\newcommand{\CpY}{{C_p^Y}}
\newcommand{\ConeY}{{C_1^Y}}
\newcommand{\hDp}{\widehat{D}^{p}}
\newcommand{\Done}{D^1}
\newcommand{\Doneloc}{D^{1}\loc}
\newcommand{\Lploc}{L^p\loc}
\newcommand{\Ga}{\Gamma}
\newcommand{\Xhat}{{\widehat{X}}}
\newcommand{\Yhat}{{\widehat{Y}}}
\newcommand{\Omhat}{\Om^\wedge}
\newcommand{\Bhat}{{\widehat{B}}}
\newcommand{\Ghat}{{\widehat{G}}}
\newcommand{\ghat}{{\hat{g}}}
\newcommand{\gt}{{\tilde{g}}}
\newcommand{\uhat}{{\hat{u}}}
\newcommand{\ut}{{\tilde{u}}}
\newcommand{\vt}{{\tilde{v}}}
\newcommand{\clG}{\itoverline{G}}
\newcommand{\clGx}{\itoverline{G}_x}
\newcommand{\clGj}{\itoverline{G}_{j}}
\newcommand{\Bik}{B_{ik}}
\newcommand{\rik}{r_{ik}}
\newcommand{\phiik}{\phi_{ik}}
\newcommand{\phijk}{\phi_{jk}}
\newcommand{\Bjk}{B_{jk}}
\newcommand{\rjk}{r_{jk}}
\newcommand{\CPI}{C_{\rm PI}}
\newcommand{\Cmu}{C_{\mu}}
\newcommand{\Leb}{\mathcal{L}}
\newcommand{\simae}{\stackrel{\textup{ae}}{\sim}}
\newcommand{\Hone}{\mathcal{H}^1}
\newcommand{\Hnone}{\mathcal{H}^{n-1}}
\newcommand{\dinK}{d_{{\rm in},\itoverline{\La B}_1}}
\newcommand{\Bin}{B_{\rm in}}
\newcommand{\La}{\Lambda}
\newcommand{\muhat}{\hat{\mu}}
\newcommand{\mut}{\tilde{\mu}}
\newcommand{\rhot}{\tilde{\rho}}
\newcommand{\muhatin}{\hat{\mu}_{\rm in}}
\newcommand{\muX}{\mu_{X}}
\newcommand{\muY}{\mu_{Y}}
\newcommand{\muYin}{\mu_{Y,\rm in}}
\newcommand{\clE}{\itoverline{E}}
\newcommand{\Et}{\widetilde{E}}
\newcommand{\Xt}{\widetilde{X}}
\numberwithin{equation}{section}
\newcommand{\imp}{\mathchoice{\quad \Longrightarrow \quad}{\Rightarrow}
                {\Rightarrow}{\Rightarrow}}
\newcommand{\eqv}{\mathchoice{\quad \Longleftrightarrow \quad}{\Leftrightarrow}
                {\Leftrightarrow}{\Leftrightarrow}}
\newcommand{\revimp}{\mathchoice{\quad \Longleftarrow \quad}{\Leftarrow}
                {\Leftarrow}{\Leftarrow}}
\newenvironment{ack}{\medskip{\it Acknowledgement.}}{}
\begin{document}

\authortitle{Anders Bj\"orn, Jana Bj\"orn and Panu Lahti}
{Removable sets for Newtonian Sobolev spaces $\ldots$}
\title{Removable sets for Newtonian Sobolev spaces and a characterization
of \p-path almost open sets}

\author{
Anders Bj\"orn \\
\it\small Department of Mathematics, Link\"oping University, SE-581 83 Link\"oping, Sweden\\
\it \small anders.bjorn@liu.se, ORCID\/\textup{:} 0000-0002-9677-8321
\\
\\
Jana Bj\"orn \\
\it\small Department of Mathematics, Link\"oping University, SE-581 83 Link\"oping, Sweden\\
\it \small jana.bjorn@liu.se, ORCID\/\textup{:} 0000-0002-1238-6751
\\
\\
Panu Lahti \\
\it\small Academy of Mathematics and Systems Science, Chinese Academy of Sciences, \\
\it\small Beijing 100190, PR China\/{\rm ;}
\it \small panulahti@amss.ac.cn, ORCID\/\textup{:} 0000-0002-1058-1625
}

\date{Preliminary version, \today}
\date{{To appear in} \emph{Rev. Mat. Iberoam.}}
\maketitle

\noindent{\small {\bf Abstract}.
We study removable sets for Newtonian Sobolev functions in metric measure spaces
  satisfying the usual (local) assumptions of a doubling measure and a Poincar\'e inequality.
In particular, when restricted to Euclidean spaces,
 a closed set $E \subset  \R^n$ with zero Lebesgue measure
is shown to be removable for $W^{1,p}(\R^n \setm E)$ if and only if $\R^n \setm E$
supports
a \p-Poincar\'e inequality as a metric space.
When $p>1$,  this recovers Koskela's result
(\emph{Ark.\ Mat.\ }{\bf 37} (1999), {291--304}),
but for $p=1$, as well as for metric spaces, it seems to be new.
We also obtain the corresponding characterization for the Dirichlet
  spaces $L^{1,p}$.
To be able to include $p=1$,
we first study extensions of Newtonian Sobolev functions in the case $p=1$
	from a noncomplete space $X$ to its completion $\Xhat$.

In these results, \p-path almost open sets play an important role,
and we  provide a characterization of them by means of \p-path
open, \p-quasiopen and \p-finely open sets.
We also show that there are nonmeasurable \p-path almost open subsets
of $\R^n$, $n \ge 2$, provided that the continuum hypothesis is assumed to be true.

Furthermore, we extend earlier results about
measurability of functions with $L^p$-integrable upper gradients,
about \p-quasiopen, \p-path and \p-finely open sets,
and about Lebesgue points for $\None$-functions,
to spaces that only satisfy local assumptions.

\medskip
\noindent
{\small \emph{Key words and phrases}:
Dirichlet space,
Lebesgue point,
local doubling measure,
noncomplete metric space, 
Newtonian space,
Poincar\'e inequality,
\p-finely open set,
\p-path almost open set,
\p-path open set,
removable set,
Sobolev space.
}

\medskip
\noindent
{\small Mathematics Subject Classification (2020):
Primary:
31E05; 
Secondary:  
26D10, 
30L15, 
30L99, 
31C45, 
46E35, 
46E36.} 
}

\section{Introduction}

The recent development in analysis on metric spaces has made it possible
to define Sobolev type spaces also on nonopen subsets of $\R^n$. 
This, in particular, leads to questions about extensions and
restrictions of Sobolev functions, as well as about 
the gradients of such restrictions in arbitrary (possibly nonmeasurable) sets.
In this paper, we address some of these questions in rather general
metric spaces and sets.

Standard assumptions in the area 
are that the metric space
is complete and equipped with a globally doubling measure supporting a
global \p-Poincar\'e inequality. 
The integrability exponent for Sobolev functions and their
gradients is often assumed to be $p>1$, since this gives
reflexive spaces and provides useful tools. 
At the same time, in many concrete situations, it is desirable to
consider noncomplete spaces and to relax the global assumptions to
local ones.
Last, but not least, the case $p=1$ is also attracting a lot of interest.

It was shown by Koskela~\cite[Theorem~C]{Koskela} that a
closed set
$E\subset\R^n$ of zero Lebesgue measure is removable for the Sobolev
space $W^{1,p}(\R^n\setm E)$, with $p>1$, if and only if 
$\R^n\setm E$ supports a \p-Poincar\'e inequality.
One of our results is that a similar equivalence holds also for $p=1$
and for metric spaces, even noncomplete ones and with only local Poincar\'e inequalities.
Moreover, we do not require $E$ to be closed, only that its
complement $\Om$ is \p-path almost open, i.e.\ 
for \p-almost every curve $\ga$, 
the preimage $\ga^{-1}(\Om)$ is a union of an open set and a set of zero 
1-dimensional Lebesgue measure.

When specialized to weighted Euclidean spaces, as in
Heinonen--Kilpel\"ainen--Martio~\cite{HeKiMa}, these results (obtained 
in Theorems~\ref{thm-hN1-rem-global} and~\ref{thm-hN1-rem-local}) can
be formulated as follows. 
Here we follow the notation of~\cite{HeKiMa} and denote
the weighted Sobolev and Dirichlet spaces by $H^{1,p}(\Om,\mu)$ and
$L^{1,p}(\Om,\mu)$, respectively. 
These spaces coincide with $\hNp(\Om)$ and $\hDp(\Om)$,
with respect to $\mu$, as defined in Section~\ref{sect-rem},
see the discussion after Theorem~\ref{thm-hN1-rem-global}.

\begin{thm} \label{thm-A-intro}
Let $1 \le p <\infty$ and $d\mu=w\,dx$, where $w$ is a
\p-admissible weight on $\R^n$ in the sense of~\textup{\cite{HeKiMa}}. 
Let $\Om=\R^n \setm E$, where $\mu(E)=0$.
Assume that $\Om$ is \p-path almost open,
which in particular holds if $\Om$ is open.

Then the following statements are equivalent\/\textup{:}
\begin{enumerate}
\item \label{A-None}
$E$ is removable for the Sobolev space $H^{1,p}(\Om,\mu)$.
\item 
$E$ is removable for the Dirichlet space $L^{1,p}(\Om,\mu)$.
\item \label{A-X}
$(\Om,\mu)$ supports a
  global \p-Poincar\'e inequality.
\end{enumerate}
\end{thm}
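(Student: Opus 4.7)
The plan is to obtain this statement as a specialization of the general metric-space removability results, Theorems~\ref{thm-hN1-rem-global} and~\ref{thm-hN1-rem-local}, applied with $X=\R^n$ equipped with the weighted measure $\mu=w\,dx$. No further analysis should be required beyond matching definitions and checking hypotheses.

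The first step is to identify the spaces. Since $w$ is $p$-admissible in the sense of~\cite{HeKiMa}, the measure $\mu$ is globally doubling on $\R^n$ and $(\R^n,\mu)$ supports a global $p$-Poincar\'e inequality. Under these assumptions, the weighted Heinonen--Kilpel\"ainen--Martio spaces $H^{1,p}(\Om,\mu)$ and $L^{1,p}(\Om,\mu)$ coincide with the Newtonian spaces $\hNp(\Om)$ and $\hDp(\Om)$ taken with respect to $\mu$, as recorded in the discussion immediately following Theorem~\ref{thm-hN1-rem-global}. This turns removability in the HKM sense into removability in the Newtonian sense, and in particular a set is null for one space if and only if it is null for the other.

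The second step is to invoke the general theorems. With $X=\R^n$, the standing hypotheses hold: $X$ is complete, $\mu$ is globally doubling, $(X,\mu)$ supports a global $p$-Poincar\'e inequality, $\mu(E)=0$, and by assumption $\Om=X\setm E$ is $p$-path almost open (which is automatic when $\Om$ is open). Theorem~\ref{thm-hN1-rem-global} then yields the equivalence of (\ref{A-None}) with (\ref{A-X}), and the same theorem (together with its Dirichlet counterpart if needed) provides the corresponding equivalence for $L^{1,p}(\Om,\mu)$, completing the proof.

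The main obstacle I anticipate is ensuring the identification $H^{1,p}(\Om,\mu)=\hNp(\Om)$ (and $L^{1,p}(\Om,\mu)=\hDp(\Om)$) on the possibly nonopen set $\Om$, particularly in the borderline case $p=1$. For $p>1$ and open $\Om$ the identification is classical, but at $p=1$ one must relate the absolute-continuity-on-lines definition used in \cite{HeKiMa} to absolute continuity along $1$-almost every curve required for Newtonian spaces. The $p$-path almost openness hypothesis on $\Om$ is precisely what enables the transfer of curve families between $\R^n$ and $\Om$, and once this identification is established, the theorem reduces to a direct citation of the general metric-space results.
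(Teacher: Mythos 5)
Your proposal is correct and follows essentially the same route as the paper: identify $H^{1,p}(\Om,\mu)$ and $L^{1,p}(\Om,\mu)$ with $\hNp(\Om)$ and $\hDp(\Om)$ (the paper does this in the discussion after Theorem~\ref{thm-hN1-rem-global}, citing Haj\l asz, Cheeger and \cite{BBbook} for open $\Om$, and \emph{by definition} for nonopen $\Om$), and then specialize Theorem~\ref{thm-hN1-rem-global} to $Y=\R^n$, $\mu_Y=\mu$, $X=\Om=\R^n\setm E$, for which all six statements there become equivalent since $\Om$ is assumed \p-path almost open. Two small slips worth noting: in Section~\ref{sect-rem} the ambient space is denoted $Y$ (with $X=Y\setm E$), so your instantiation ``$X=\R^n$'' should read ``$Y=\R^n$''; and the \p-path almost openness is used inside Theorem~\ref{thm-hN1-rem-global} to close the implication cycle, not for the identification of the function spaces, which for nonopen $\Om$ is simply a matter of definition.
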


Note that every set $\Om$ with $\mu(\Om \cap \bdy \Om)=0$ is \p-path almost open,
by Theorem~\ref{thm-path-almost-open-char-intro}.
When $\Om$ is not open and in the metric setting, the Sobolev
and Dirichlet spaces have to be interpreted by means of upper
gradients as in Section~\ref{sect-ug}.

Removable sets for Sobolev spaces is a classical topic, also related
to sets of capacity zero and to singularities of quasiconformal mappings.
We refer to Koskela~\cite{Koskela}
for further references and a much more extensive discussion.
Among other results in \cite{Koskela},
\p-porous sets contained in a hyperplane
  were shown to be removable for $H^{1,p}$
  (and equivalently for the \p-Poincar\'e inequality).

Removable sets for Poincar\'e inequalities in metric spaces were
studied in Koskela--Shanmugalingam--Tuominen~\cite{KoShTu00}.
Their results on porous sets, together with our
Theorems~\ref{thm-A-intro}, \ref{thm-hN1-rem-global}
and~\ref{thm-hN1-rem-local}, therefore provide examples of removable
sets for Sobolev and Dirichlet spaces,
see \cite[Theorems~A,~B and Proposition~3.3]{KoShTu00}.
Removability for Dirichlet spaces was not
discussed in \cite{Koskela} or \cite{KoShTu00}.

As mentioned in \cite[p.~335]{KoShTu00},
Koskela's proof can be generalized to metric spaces with global
assumptions, provided that $E$ is compact, its complement is connected
and $p>1$.
We approach the problem from a different angle, though similar methods 
lie behind some of our arguments as well.
Namely, we
rely on extensions of Newtonian (Sobolev) functions from
a noncomplete metric space $X$ to its completion $\Xhat$, recently
considered in \cite{BBnoncomp} for $p>1$.

To be able to handle also $p=1$, we therefore first prove the following extension result.
In addition,
as in \cite{BBnoncomp},
we replace the global assumptions of a doubling measure and a 
$1$-Poincar\'e inequality by weaker local conditions.
These local assumptions, as well as the
Newtonian and Dirichlet spaces $N^{1,p}(X)$ and $D^p(X)$,  
will be defined in Section~\ref{sect-ug}.

\begin{thm}\label{thm-intro}
	Assume that the doubling property
	and the $1$-Poincar\'e inequality hold within 
	an open set $\Om \subset X$
	in the sense of Definition~\ref{def-local-intro}.
	Let $u \in \Done(\Om)$.
Let $\Omhat=\Xhat \setm \itoverline{X \setm \Om}$,
where the closure is taken in the completion $\Xhat$ of $X$.
Then there is 
	$\uhat\in\Done(\Omhat)$ such that $\uhat=u$ $\ConeX$-q.e.\ in 
	$\Om$ and
	the
	minimal $1$-weak upper gradients $g_{u}:=g_{u,\Om}$ of $u$ and
	$g_{\uhat}:=g_{\uhat,\Omhat}$ of $\uhat$, with
          respect to $\Om$ and $\Omhat$, respectively, satisfy
	\[ 
	g_{\uhat} \le A_{0} g_u \quad \text{a.e.\ in }\Om,
	\] 
	where $A_{0}$ is a constant depending only on the doubling 
	constant and both constants in the $1$-Poincar\'e inequality within 
	$\Om$.
	In particular, the function $\uhat$ can be taken to be
		\begin{equation}\label{eq-Leb-pt-1}
		\uhat(x)=\limsup_{r \to 0} \vint_{\Bhat(x,r) \cap \Om} u \, d\mu, \quad
		x \in \Omhat.
		\end{equation}

If $\Om$ is also $1$-path open in $\Xhat$ 
then we can, in the above conclusion\/
\textup{(}except for~\eqref{eq-Leb-pt-1}\textup{)},        
        take
	$\uhat\equiv u$ 
	and $g_{\uhat}\equiv g_u$
        in $\Om$.
\end{thm}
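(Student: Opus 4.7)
The strategy is to adapt the scheme of \cite{BBnoncomp}, proved there for $p>1$, to the case $p=1$ (where reflexivity of Newtonian spaces is unavailable) and to the local assumptions within $\Om$. The decisive ingredient is the Lebesgue point property for $\Doneloc(\Om)$-functions, which the paper separately extends from the global to the local setting. Granted that property, the pointwise formula~\eqref{eq-Leb-pt-1} automatically produces $\uhat=u$ $\ConeX$-q.e.\ in $\Om$, and the task reduces to exhibiting a $1$-weak upper gradient of $\uhat$ on $\Omhat$.

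Extend $g_u$ by zero to $\Omhat\setm\Om$ and take $\ghat:=A_{0}g_u$ as the candidate upper gradient. Along a rectifiable curve $\gah:[0,L]\to\Omhat$ parametrized by arc length, I would run the standard telescoping argument at scale $r>0$: cover $\gah$ by a chain of balls $\Bhat_k:=\Bhat(\gah(t_k),r)$ centered on the curve. Since $\Omhat\cap X=\Om$ is dense in $\Omhat$ and doubling holds within $\Om$, one obtains a uniform density bound $\mu(\Bhat_k\cap\Om)\ge c\mu(\Bhat_k)$, so the averages $u_k:=\vint_{\Bhat_k\cap\Om}u\,d\mu$ are well defined. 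The $1$-Poincar\'e inequality within $\Om$ then yields
\[
|u_{k+1}-u_k|\le C r\vint_{\la\Bhat_k\cap\Om}g_u\,d\mu,
\]
and summing over $k$, while letting $r\to 0$, should produce
\[
|\uhat(\gah(L))-\uhat(\gah(0))|\le \int_{\gah}\ghat\,ds
\]
along $1$-a.e.\ $\gah$, which is the desired upper-gradient inequality.

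The main obstacle I expect is the handling of curves that spend significant length in $\Omhat\setm\Om\subset\Xhat\setm X$, where $\ghat$ vanishes. A Fubini/modulus argument analogous to the one in \cite{BBnoncomp}, exploiting $\mu(\Omhat\setm\Om)=0$, should show that the family of curves whose intersection with $\Omhat\setm\Om$ has positive length is $1$-exceptional; outside this family, the telescoping goes through. The replacement of reflexivity by the explicit Lebesgue representative~\eqref{eq-Leb-pt-1} is what allows the argument to close at $p=1$: convergence of the averages $u_k$ to $\uhat$ at the Lebesgue points of $\uhat$ on $\Omhat$ substitutes for norm-compactness. For the final clause, if $\Om$ is $1$-path open in $\Xhat$, then for $1$-a.e.\ curve $\gah$ in $\Xhat$ the set $\gah^{-1}(\Om)$ is open, so $u\circ\gah$ is absolutely continuous on it with derivative bounded by $g_u\circ\gah$, while the complement already has length zero along $1$-a.e.\ curve; setting $\uhat\equiv u$ and $g_{\uhat}\equiv g_u$ on $\Om$ and extending by~\eqref{eq-Leb-pt-1} on $\Omhat\setm\Om$ therefore suffices, with no factor $A_{0}$ inside $\Om$.
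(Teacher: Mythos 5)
The heart of the matter at $p=1$ is precisely the step you leave implicit: ``summing over $k$, while letting $r\to 0$, should produce $|\uhat(\gah(L))-\uhat(\gah(0))|\le\int_{\gah}\ghat\,ds$''. The telescoping sum $\sum_k Cr\vint_{\la\Bhat_k\cap\Om}g_u\,d\mu$ is a Riemann-type sum whose mesh size is coupled to the averaging scale, so passing to the limit requires controlling the averaged gradient simultaneously over all small scales. The natural dominating object is a restricted Hardy--Littlewood maximal function of $g_u$, which is \emph{not} bounded on $L^1$; this is exactly why a chaining/Haj\l{}asz-type argument works for $p>1$ but stalls at $p=1$. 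The paper resolves this by a genuinely different mechanism: it builds discrete convolutions $u_k=\sum_j u_{\Bjk}\phijk$ at a \emph{fixed} Whitney scale, so that the associated upper gradients $g_k=C_0\sum_i\chi_{\Bhat_{ik}}\vint_{10\la\Bik}g_u\,d\mu$ are \emph{averages} (not suprema) over a bounded-overlap covering, hence equi-integrable (Lemma~\ref{lem:generalized equiintegrability}); Dunford--Pettis then gives weak $L^1$-compactness and Mazur's lemma closes the argument. Your proposal has no substitute for the equi-integrability/Dunford--Pettis step, and an attempted fix along your lines (say, $L^1$-convergence of the $r$-averages plus Fuglede's lemma) runs into further trouble aligning the subsequence from Fuglede with the $\limsup$ defining $\uhat$ at the curve's endpoints.

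Two further points. First, invoking the $\ConeX$-q.e.\ Lebesgue-point property for $\Doneloc(\Om)$-functions at the outset is circular: Proposition~\ref{prop-Leb-pt} is proved \emph{via} Proposition~\ref{prop-ext-Nploc}, which itself rests on Theorem~\ref{thm-intro}. The paper only uses Lebesgue points a.e.\ in $\Om$ (from the doubling property) to get $\uhat=u$ a.e., and then derives the $\ConeX$-q.e.\ equality from the already established membership $\uhat\in\Done(\Omhat)$ and the fact that two $D^1(\Om)$-functions agreeing a.e.\ agree $\ConeOm$-q.e. Second, the $1$-Poincar\'e inequality is assumed only within $\Om$, i.e.\ for $X$-balls $B\subset\Om$ with center in $\Om$; it does not directly apply to balls $\Bhat(\gah(t_k),r)$ centered at points $\gah(t_k)\in\Omhat\setm\Om$. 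This is precisely why the paper's Whitney covering uses balls $\Bik$ centered in $\Om$ with $80\la\Bik\subset\Om$, and then extends $u_k$ and $g_k$ continuously to $\Omhat$; your direct chaining would need an analogous reduction. Your treatment of the $1$-path-open case is also somewhat informal — the paper gets $\uhat\equiv u$ and $g_{\uhat}\equiv g_u$ cleanly from Lemmas~\ref{lem-same-zero-cap} and~\ref{lem-gu-on-p-path-open-Xhat} applied \emph{after} the extension exists — but the idea there is broadly aligned.
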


The idea of the proof is to approximate $u$
by discrete convolutions that immediately extend to $\Omhat$.
This goes back to
	the aforementioned paper by Koskela~\cite[Theorem~C]{Koskela}
and is similar to \cite{BBnoncomp} and
	Heikkinen--Koskela--Tuominen~\cite{HKT}.
When $1<p<\infty$,
one can use the reflexivity of $L^p$
to extract a weakly converging subsequence from the \p-weak
upper gradients of these discrete convolutions. 
In the case $p=1$, we instead  show that the sequence of
$1$-weak upper gradients
is \emph{equi-integrable}, and then apply the Dunford--Pettis theorem to obtain a weakly converging subsequence.
In this way, at the limit we obtain the desired function $\uhat\in \Done(\Omhat)$.
Just as in the case $p>1$ considered in~\cite{BBnoncomp}, we do not know whether
it is ever necessary to have $A_0>1$.

To replace the usual global assumptions by similar local ones in our results, we
apply a recent result of Rajala~\cite{rajala} about approximations by
uniform domains.
In particular, we extend results  about
measurability of functions with $L^p$-integrable upper gradients 
(from \cite{JJRRS}),
about \p-quasiopen, 
  \p-path and \p-finely open sets
(from \cite{BBLat2}, \cite{BBMaly} and~\cite{LahJMPA}),
and about Lebesgue points for $\None$-functions
(from \cite{KKST}),
to spaces that only satisfy local assumptions, see
Section~\ref{sect-rajala} and Proposition~\ref{prop-Leb-pt}.
These localized results are useful later on in the paper.

Observe that in Theorem~\ref{thm-intro} we do not
  require $\Om$ to be measurable in $\Xhat$, see Section~\ref{sect-Xhat}
  for details.
 It is not known if $\Om$ can
  satisfy the assumptions in Theorem~\ref{thm-intro}
  and at the same time 
  be nonmeasurable
in $\Xhat$.
Nevertheless, in
Section~\ref{sec:counterexample} we construct a
measurable
set in $\R^2$,
with full measure and satisfying the
conclusions in Theorems~\ref{thm-A-intro}
and~\ref{thm-intro}
(except for the last part), but which
is not even \p-path almost open in $\R^2$.

The role of \p-path (almost) open sets in our arguments is that
they preserve minimal \p-weak upper gradients and  
sets with zero capacity, see
Lemmas~\ref{lem-gu-on-p-path-open-Xhat}, \ref{lem-same-zero-cap} and 
Bj\"orn--Bj\"orn~\cite[Proposition~3.5]{BBnonopen}.
In Section~\ref{sec:p-path-almost-open}, we study these sets in more
detail and prove the following characterization, which 
combines Theorems~\ref{thm-BBM-gen}
and~\ref{thm-p-path-almost-open-char}.

\begin{thm}        \label{thm-path-almost-open-char-intro}
Assume that $X$ is locally compact and that $\mu$ is locally doubling
and supports a local \p-Poincar\'e inequality, $1 \le p<\infty$.
Let $U \subset X$ be measurable.
Then the following are equivalent\/\textup{:}
\begin{enumerate}
\item \label{h-1}
$U$ is \p-path almost open.
\item  \label{h-2}
$U=V\cup N$, where $V$ is
  \p-path open and $\mu(N)=0$.
\item \label{h-3}
$U=V\cup N$, where $V$ is
  \p-quasiopen and $\mu(N)=0$.
\item \label{h-4}
$U=V\cup N$, where $V$ is
  \p-finely open and $\mu(N)=0$.
\end{enumerate}
\end{thm}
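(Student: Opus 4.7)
The plan is to prove the cyclic chain $\textup{(d)}\Rightarrow\textup{(c)}\Rightarrow\textup{(b)}\Rightarrow\textup{(a)}\Rightarrow\textup{(d)}$, so that the genuinely new content resides in the last arrow. This matches the structure announced in the introduction: Theorem \ref{thm-BBM-gen} is expected to deliver the equivalences $\textup{(b)}\Leftrightarrow\textup{(c)}\Leftrightarrow\textup{(d)}$, while Theorem \ref{thm-p-path-almost-open-char} supplies the new link to~\textup{(a)}.

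For $\textup{(d)}\Rightarrow\textup{(c)}\Rightarrow\textup{(b)}$ I would invoke the standard chain ``\p-finely open $\Rightarrow$ \p-quasiopen $\Rightarrow$ \p-path open'', known in the global doubling/Poincar\'e setting from Bj\"orn--Bj\"orn--Latvala and Bj\"orn--Bj\"orn--Mal\'y, and now available in the local setting thanks to Section \ref{sect-rajala}; enlarging any of these sets by a $\mu$-null set preserves each of the three properties. For $\textup{(b)}\Rightarrow\textup{(a)}$, write $U=V\cup N$ with $V$ \p-path open and $\mu(N)=0$. The exceptional family of curves along which $\gamma^{-1}(V)$ fails to be open is \p-exceptional by definition, and the family for which $\mathcal{L}^1(\gamma^{-1}(N))>0$ is \p-exceptional as well (the density $\infty\cdot\chi_N$ is admissible for it and has $L^p$-norm zero). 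For every curve outside both exceptional families, $\gamma^{-1}(U)$ is the union of an open set and an $\mathcal{L}^1$-null set, which is exactly~\textup{(a)}.

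The substantive step is $\textup{(a)}\Rightarrow\textup{(d)}$. Given $U$ measurable and \p-path almost open, set $V:=\fineint(U)$ and $N:=U\setm V$; by construction $V$ is \p-finely open, so it remains to show $\mu(N)=0$. Proposition \ref{prop-Leb-pt} applied to $\chi_U$ gives that $\mu$-a.e.\ point of $U$ is a point of density $1$ for $U$, so it suffices to verify that every density point of $U$ lies in $\fineint(U)$. Here the \p-path almost openness is used decisively: along \p-a.e.\ curve through a density point $x$, $\gamma^{-1}(U)$ agrees modulo $\mathcal{L}^1$-null sets with an open set, and combining this fact with the density of $U$ at $x$ through a Wiener-type capacity criterion forces $X\setm U$ to be \p-thin at $x$, i.e.\ $x\in\fineint(U)$. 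Making this capacity-versus-curves comparison quantitative while relying only on the local doubling and Poincar\'e assumptions is the main obstacle; that the path-almost-open hypothesis is genuinely needed is confirmed by the measurable set constructed in Section \ref{sec:counterexample}, which has density $1$ at every point of full measure yet fails~\textup{(a)}.
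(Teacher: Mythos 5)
Your decomposition of the theorem into $\textup{(d)}\Rightarrow\textup{(c)}\Rightarrow\textup{(b)}\Rightarrow\textup{(a)}$ is fine and is essentially the paper's route (via Theorem~\ref{thm-BBM-gen} and \cite[Lemma~3.2]{BBnonopen}), but your step $\textup{(a)}\Rightarrow\textup{(d)}$ has a genuine gap, and the reduction you propose is in fact false.

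First, a technical misstep: Proposition~\ref{prop-Leb-pt} is a Lebesgue-point result for $N^{1,1}\loc$-functions and cannot be applied to $\chi_U$, which is generally not Newtonian. The fact that $\mu$-a.e.\ point of a measurable set $U$ has density one is the ordinary Lebesgue density theorem for doubling measures (Heinonen~\cite[Theorem~1.8]{heinonen}, also used in Proposition~\ref{prop-path-negligible}), not a Sobolev-theoretic statement. More seriously, the reduction to ``every density point of $U$ lies in $\fineint U$'' is untrue even under the hypothesis \ref{h-1}. Example~\ref{ex-hyperplane-1} gives a measurable, \p-path almost open set $X=\R^n\setm E$ with $\mu(E)=0$, hence every point of $X$ has density one, yet it is shown there that $\fineint X=\R^n\setm H$ while $X\cap H=H\setm E$ has positive $\CpRn$-capacity. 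Thus density-one points can lie outside the fine interior, and no ``Wiener-type capacity criterion'' will force $X\setm U$ to be \p-thin at every density point of $U$. You acknowledge the quantitative comparison as ``the main obstacle,'' but the obstacle is not a missing estimate — the claimed sufficient condition is simply not what needs to be proved.

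The paper's route through \ref{h-1}$\Rightarrow$\ref{h-2} is qualitatively different. Theorem~\ref{thm-path-almost-open-char} decomposes any \p-path almost open $U$ as $V\cup N$ with $V$ \p-path open and $N$ \emph{\p-path negligible}, and this is a nontrivial construction: one defines $u(x)=\min\{1,\inf_\gamma\int_\gamma(\rho+\chi_B)\,ds\}$ for a gauge $\rho$ that blows up on the exceptional curve family, and one shows $N:=\{u=0\}\cap U$ is \p-path negligible by an explicit ``zigzagging loop'' curve-grafting argument that would otherwise produce a bad curve of finite $\rho$-length. The measurability of $U$ is then used only once, in Proposition~\ref{prop-path-negligible}, where the local Poincar\'e inequality rules out a measurable \p-path negligible set of positive measure via a density-point contradiction. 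Passing from $V$ \p-path open to $V'$ \p-finely open modulo a zero-capacity set (hence $\mu$-null set) is then Theorem~\ref{thm-BBM-gen}. None of this machinery appears in your outline, and the decomposition $U=\fineint(U)\cup(U\setm\fineint U)$ you propose is not the decomposition the theorem produces: the correct $V$ need not equal $\fineint U$, it is only \p-finely open and equal to $U$ up to a null set.
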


The \p-path almost open sets were introduced in \cite{BBnonopen} and
the implication~\ref{h-2}$\imp$\ref{h-1} was proved therein
\cite[Lemma~3.2]{BBnonopen}. 
Since \p-quasiopen, \p-path and \p-finely open sets are
  measurable (under the above assumptions), the characterization in
  Theorem~\ref{thm-path-almost-open-char-intro} is not possible for
  nonmeasurable \p-path almost open sets.
At the same time, we show that there are nonmeasurable \p-path almost open subsets
of $\R^n$, $n \ge 2$, provided that the continuum hypothesis is
assumed.
Together with Theorem~\ref{thm-path-almost-open-char-intro}
and Example~\ref{ex-Leb+Dirac}, this
answers Open problem~3.4 in~\cite{BBnonopen}.

Quasiopen and finely open sets have earlier been  used in various 
areas of mathematics.
For example, quasiopen sets appear naturally as minimizing sets in shape
optimization problems, see e.g.\ Buttazzo--Dal Maso~\cite{buttazzo-dalMaso},
Buttazzo--Shrivastava~\cite[Examples~4.3 and~4.4]{buttazzo-shr}
and Fusco--Mukherjee--Zhang~\cite{FuscoMZ}.
They are also level sets of Sobolev functions and are thus 
(together with \p-finely open sets)
suitable for
the theory of Sobolev spaces,
see Kilpel\"ainen--Mal\'y~\cite{KiMa92}, Mal\'y--Ziemer~\cite{MZ} and
Fuglede~\cite{Fuglede71}, \cite{Fug}.
In this context, our Theorems~\ref{thm-A-intro},
\ref{thm-hN1-rem-global} and~\ref{thm-hN1-rem-local}
fully characterize removable singularities
with zero measure for Sobolev (and Dirichlet) spaces on \p-quasiopen
(and thus also \p-finely open) sets.
Finely open sets define the fine topology and are closely related to
superharmonic functions. 
Fine potential theory on finely open sets has been studied since the
1940s, see Cartan~\cite{cartan46} (the linear case, $p=2$).

\begin{ack}
The authors wish to thank 
two anonymous referees for a very careful reading of the paper
and for suggesting several improvements.
The first two authors were supported by the Swedish Research Council, 
grants 2016-03424 and 2020-04011 resp.\ 621-2014-3974 and 2018-04106.
This research began
while the third author visited Link\"oping University in 2017 and
2018; he thanks the Department of Mathematics for its warm hospitality.
\end{ack}

\section{Upper gradients and Newtonian spaces}
\label{sect-ug}

\emph{We assume throughout the paper, except for Section~\ref{sect-rem}, 
that $1\le p<\infty$ and that
$X=(X,d,\mu)$ is a metric space equipped
with a metric $d$ and a positive complete  Borel  measure $\mu$
such that $0<\mu(B)<\infty$ for all 
balls $B \subset X$.}

\medskip

It follows that $X$ is  separable and Lindel\"of.
To avoid pathological situations we assume that $X$ contains
at least two points.
Proofs of the results in 
this section can be found in the monographs
Bj\"orn--Bj\"orn~\cite{BBbook} and
Heinonen--Koskela--Shanmugalingam--Tyson~\cite{HKSTbook}.

A \emph{curve} is a continuous mapping from an interval,
and a \emph{rectifiable} curve is a curve with finite length.
Unless said otherwise, we will only consider curves that
are nonconstant, compact and 
rectifiable, and thus each curve can 
be parameterized by its arc length $ds$. 
A property is said to hold for \emph{\p-almost every curve}
if it fails only for a curve family $\Ga$ with zero \p-modulus. 
Here the \emph{\p-modulus} of $\Gamma$ is 
\[
\Mod_{p,X}(\Gamma):=\inf_\rho \int_X\rho^p\, d\mu
\]
with the infimum taken over all nonnegative 
Borel functions $\rho$ on $X$ such that 
$\int_\ga\rho\, ds\ge 1$ for each $\ga\in\Ga$.

Following Heinonen--Kos\-ke\-la~\cite{HeKo98},
we next introduce upper gradients 
(called very weak gradients in~\cite{HeKo98}).

\begin{deff} \label{deff-ug}
A Borel function
$g\colon X \to [0,\infty]$
is an \emph{upper gradient} 
of a function
$u\colon X \to \eR:=[-\infty,\infty]$
if for all  curves  
$\gamma \colon [0,l_{\gamma}] \to X$,

\begin{equation} \label{ug-cond}
        |u(\gamma(0)) - u(\gamma(l_{\gamma}))| \le \int_{\gamma} g\,ds,
\end{equation}
where the left-hand side is considered to be $\infty$ 
whenever at least one of the 
terms therein is infinite.
If
$g\colon X \to [0,\infty]$
is measurable 
and \eqref{ug-cond} holds for \p-almost every curve,
then $g$ is a \emph{\p-weak upper gradient} of~$u$. 
\end{deff}

The \p-weak upper gradients were introduced in
Koskela--MacManus~\cite{KoMc}. 
It was also shown therein
that if $g \in \Lploc(X)$ is a \p-weak upper gradient of $u$,
then one can find a sequence $\{g_j\}_{j=1}^\infty$
of upper gradients of $u$ such that $\|g_j-g\|_{L^p(X)} \to 0$.
If $u$ has an upper gradient in $\Lploc(X)$, then
it has an a.e.\ unique \emph{minimal \p-weak upper gradient} $g_u \in \Lploc(X)$
in the sense that $g_u \le g$ a.e.\ for every
\p-weak upper gradient $g \in \Lploc(X)$ of $u$,
see Shan\-mu\-ga\-lin\-gam~\cite{Sh-harm}
and Haj\l asz~\cite{Haj03}.
Following Shanmugalingam~\cite{Sh-rev}, 
we define a version of Sobolev spaces on the metric space $X$.

\begin{deff} \label{deff-Np}
For a measurable function
$u\colon X\to \eR$,
let 
\[
        \|u\|_{\Np(X)} = \biggl( \int_X |u|^p \, d\mu 
                + \inf_g  \int_X g^p \, d\mu \biggr)^{1/p},
\]
where the infimum is taken over all upper gradients $g$ of $u$.
The \emph{Newtonian space} on $X$ is 
\[
        \Np (X) = \{u :  \|u\|_{\Np(X)} <\infty \}.
\]
\end{deff}

The quotient
space $\Np(X)/{\sim}$, where  $u \sim v$ if and only if $\|u-v\|_{\Np(X)}=0$,
is a Banach space and a lattice, see Shan\-mu\-ga\-lin\-gam~\cite{Sh-rev}.
We also define
\[
   \Dp(X)=\{u : u \text{ is measurable, finite
a.e.\ and  has an upper gradient
     in }   L^p(X)\}.
\]
This definition deviates from the definition
in \cite[Definition~1.54]{BBbook} in that it requires the functions
to be finite a.e., which   
will be useful in e.g.~Theorem~\ref{thm-isom-rem}, 
see Remark~\ref{rmk-Dp}.
The two definitions coincide whenever $X$ supports a local \p-Poincar\'e
inequality, since any measurable function with an upper gradient in $L^p(X)$ then
belongs to $L^1\loc(X)$,
 see \cite[Proposition~4.13]{BBbook} and \cite[p.~50]{BBnoncomp}.

In this paper we assume that functions in $\Np(X)$
and $\Dp(X)$
 are defined everywhere (with values in $\eR$),
not just up to an equivalence class in the corresponding function space.
This is important for upper gradients to make sense.

For a measurable set $E\subset X$, the Newtonian space $\Np(E)$ is defined by
considering $(E,d|_E,\mu|_E)$ as a metric space in its own right.
We say  that $u \in \Nploc(E)$ if
for every $x \in E$ there exists a ball $B_x\ni x$ such that
$u \in \Np(B_x \cap E)$.
The spaces $L^p(E)$, $\Lploc(E)$,
$\Dp(E)$ and $\Dploc(E)$ are defined similarly.
If $u,v \in \Dploc(X)$, then $g_u=g_v$ a.e.\ in the set $\{x \in X :  u(x)=v(x)\}$,
in particular for $c \in \R$ we have
$g_{\min\{u,c\}}=g_u \chi_{\{u < c\}}$ a.e.
Moreover, if $u,v \in \Dp(X)$, then
  $|u|g_v+|v|g_u$ is a \p-weak upper gradient of $uv$.

It is easily verified by gluing curves together that if $g_1$
  and $g_2$ are upper gradients for a function $u$ in the open sets
  $G_1$ and $G_2$, respectively, then $g_1\chi_{G_1}+g_2\chi_{G_2}$ is
  an upper gradient for $u$ in $G_1\cup G_2$.
  From this it
  follows that if $u\in \Np(G_j)$, $j=1,2$, then $u\in\Np(G_1\cup G_2)$.
A similar sheaf property holds for $D^p$.

\begin{deff} \label{deff-Sob-cap}
The (Sobolev) \emph{capacity}  of a set $E\subset X$  is the number 
\begin{equation*} 
   \CpX(E)=\inf_u    \|u\|_{\Np(X)}^p,
\end{equation*}
where the infimum is taken over all $u\in \Np (X)$ such that $u=1$ on $E$.
\end{deff}

We say that a property holds \emph{$\CpX$-quasieverywhere} ($\CpX$-q.e.)\ 
if the set of points  for which the property fails 
has zero $\CpX$-capacity.
The capacity is the correct gauge 
for distinguishing between two Newtonian functions. 
Namely, if $v \in \Np(X)$ then $u \sim v$ if and only if $u=v$ $\CpX$-q.e.
Moreover, if $u,v \in \Dploc(X)$ and $u=v$ a.e., then $u=v$ $\CpX$-q.e.
In this paper we will use many different $\CpX$-capacities with respect
to different metric spaces $X$; this will always be carefully denoted
in the superscript.

\begin{deff}   \label{deff-q-cont}
An $\eR$-valued function $u$,
defined on a set  $E \subset X$, is \emph{$\CpX$-quasi\-con\-tin\-u\-ous}
if for every $\eps>0$ there is an open set $G\subset X$
such that $\CpX(G)<\varepsilon$ and $u|_{E \setm G}$ is
$\R$-valued
and continuous.
\end{deff}

For a ball $B=B(x,r)$ with centre $x$ and radius $r$, we let
$\lambda B = B(x, \lambda r)$. In metric spaces
it can happen that balls with different centres and/or radii denote the same set.
We will, however,
make the convention that a ball
comes with a predetermined centre and radius.
In this paper,
balls are assumed to be open.

\begin{deff} \label{def-local-intro}
The measure $\mu$ is \emph{doubling within  an open set $\Om\subset X$}
if there is $C>0$ (depending on $\Om$) 
such that $\mu(2B)\le C \mu(B)$ for all balls $B \subset \Om$.

Similarly, the
\emph{\p-Poincar\'e inequality holds within an open set $\Om$}
if there are constants $C>0$ and $\lambda \ge 1$
(depending on $\Om$)
such that for all balls $B\subset \Om$,
all integrable functions $u$ on $\la B$, and all upper gradients $g$ of $u$
in $\la B$,
\begin{equation}  \label{eq-PI-on-B}
        \vint_{B} |u-u_B| \,d\mu
        \le C r_B \biggl( \vint_{\lambda B} g^{p} \,d\mu \biggr)^{1/p},
\end{equation}
where $u_B:=\vint_B u \,d\mu := \int_B u\, d\mu/\mu(B)$
and $r_B$ is the radius of $B$.

Each of these properties is
called \emph{local} if 
for every $x \in X$ there is $r>0$
(depending on $x$) such that the
property holds within $B(x,r)$.
The property is
called \emph{semilocal} if it holds
within every ball
$B(x_0,r_0)$ in $X$.
If moreover $C$ and $\la$ are independent of $x_0$ and $r_0$, then
it is called \emph{global}.
\end{deff}

Note that there is a difference between a property holding
\emph{within} $\Om\subset X$ (i.e.\ for balls taken in the 
underlying space $X$) and \emph{on} 
$\Om$, seen as a metric space in its own right, where balls are 
taken with respect to 
$\Om$.

The \p-Poincar\'e inequality can equivalently be defined using
  \p-weak upper gradients.
We will need the following characterization of
the \p-Poincar\'e inequality showing that it is enough to test
with bounded $u \in \Np(X)$.

\begin{lem} \label{lem-PI-char}
Let $\Om \subset X$ be open.
Assume that
there are constants $C>0$ and $\lambda \ge 1$ such that~\eqref{eq-PI-on-B}
holds for all balls $B\subset \Om$ and all bounded $u \in \Np(X)$.
Then the \p-Poincar\'e inequality holds within $\Om$
with the same constants $C$ and $\la$.
\end{lem}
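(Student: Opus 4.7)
The plan is to reduce the general $p$-Poincar\'e inequality to its hypothesized bounded-$\Np(X)$ variant via a truncation-plus-extension argument.

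First I would truncate. Given $u\in L^1(\lambda B)$ with upper gradient $g\in L^p(\lambda B)$, set $u_k:=\max(-k,\min(u,k))$. Truncation is $1$-Lipschitz, hence $g$ is still an upper gradient of $u_k$ on $\lambda B$. Combining $|u_k|\le|u|\in L^1(B)$ with $u_k\to u$ almost everywhere, dominated convergence yields
\[
\vint_B|u_k-(u_k)_B|\,d\mu\longrightarrow\vint_B|u-u_B|\,d\mu\qquad\text{as }k\to\infty,
\]
while the right-hand side of~\eqref{eq-PI-on-B} is unchanged. Hence it suffices to prove~\eqref{eq-PI-on-B} for bounded $u$ on $\lambda B$.

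Next I would globally extend. For such a bounded $u$, pick a Lipschitz cutoff $\eta\colon X\to[0,1]$ with $\eta\equiv 1$ on $B$, $\eta\equiv 0$ outside $\lambda B$, and $\mathrm{Lip}\,\eta\le 1/((\lambda-1)r_B)$ (the case $\lambda=1$ being vacuous, since then the right-hand side of~\eqref{eq-PI-on-B} already integrates only over $B$ and the cutoff can be taken $\equiv 1$ on $B$). Set $v:=u\eta$ on $\lambda B$, extended by $0$ to $X\setminus\lambda B$. Since $\eta$ is continuous and vanishes outside $\lambda B$, the function $v$ has no jumps and, by the Leibniz rule for upper gradients, admits the upper gradient $g_v$ on $X$ with $g_v=g$ on $B$ (where $\eta$ is constant) and $g_v\le g\eta+|u|\,\mathrm{Lip}\,\eta$ on the annulus $\lambda B\setminus B$. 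Thus $v\in\Np(X)$ is bounded, so the hypothesis applies to $v$ on the ball $B$ and, using $v\equiv u$ on $B$,
\[
\vint_B|u-u_B|\,d\mu=\vint_B|v-v_B|\,d\mu\le Cr_B\Bigl(\vint_{\lambda B}g_v^p\,d\mu\Bigr)^{1/p}.
\]

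The hard part will be reducing the right-hand side to the target form $Cr_B(\vint_{\lambda B}g^p\,d\mu)^{1/p}$ without degrading the constants. I would first centre $u$ by subtracting the constant $u_{\lambda B}$ (permissible since both sides of~\eqref{eq-PI-on-B} are invariant under adding constants to $u$), so that the annular contribution to $g_v$ becomes $|u-u_{\lambda B}|\,\mathrm{Lip}\,\eta\,\chi_{\lambda B\setminus B}$. The final step, which is the technical heart of the argument, is to absorb this annular contribution---presumably via a limiting sequence of cutoffs together with the lower semicontinuity and capacitary properties of minimal $p$-weak upper gradients---so that the sharp constants $C$ and $\lambda$ are preserved in the resulting inequality.
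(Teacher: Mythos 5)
You correctly identify the truncation step and you correctly foresee the central difficulty, but you do not resolve it, and the resolution you gesture at would not work as sketched. Concretely: you apply the hypothesis to the ball $B$ itself after multiplying by a cutoff $\eta$ that equals $1$ only on $B$. The resulting upper gradient of $v=u\eta$ on $\lambda B\setminus B$ then contains the term $|u-u_{\lambda B}|\operatorname{Lip}\eta$, which is supported precisely inside the region $\lambda B$ over which the right-hand side of~\eqref{eq-PI-on-B} integrates. Controlling $\vint_{\lambda B\setminus B}|u-u_{\lambda B}|^p/r_B^p\,d\mu$ by $\vint_{\lambda B}g^p\,d\mu$ is itself a Poincar\'e-type inequality---exactly what is being proved---so the argument becomes circular, and a ``limiting sequence of cutoffs'' pushed towards $\partial B$ does not help since the annulus $\lambda B\setminus B$ keeps fixed positive measure. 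Lower semicontinuity of upper gradients does not rescue this either; the issue is not convergence but that the unwanted term lives inside the integration domain.

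The paper sidesteps the annular contribution entirely by applying the hypothesis to a \emph{slightly smaller} ball rather than to $B$. It sets $B_j=B(x,(1-2^{-j})r)$ and uses a cutoff that is identically $1$ on $\lambda B_j$ (so $v_j=u_j$ on $\lambda B_j$, where $u_j$ is the truncation). Then the hypothesis is applied to $v_j$ and $B_j$: the right-hand side integrates $g^p$ only over $\lambda B_j$, where the cutoff is constant, so no extra term appears and the constants $C,\lambda$ pass through unchanged. Letting $j\to\infty$ (using $u_j\to u$ in $L^1(B)$, $\mu(B_j)\nearrow\mu(B)$, $\mu(\lambda B_j)\nearrow\mu(\lambda B)$) recovers the inequality for $B$. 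This inner-approximation of $B$ is the missing idea in your proposal.
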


Below and later, we write $u_\limplus=\max\{0,u\}$ and $u_\limminus=\max\{0,-u\}$.

\begin{proof}
Let $B=B(x,r)\subset \Om$ be a ball,
$u$ be an integrable function on $\la B$, and $g$ be an upper gradient
of $u$ in $\la B$.
We may assume that $g \in L^p(\la B)$, as otherwise
there is nothing to prove.
Thus $u \in D^p(\la B)$.

For $j=1,2,\ldots$\,, let $B_j=B(x,(1-2^{-j})r)$,
\[
  u_j=\max\{\min\{u,j\},-j\}
  \quad \text{and} \quad
  v_j=(1-2^{j+1}r\dist(x,\la B_j))_\limplus u_j,
\]
extended by zero outside $\la B$. 
Then $v_j \in \Np(X)$ is bounded
and $g$ is an upper gradient of $v_j$ in $\la B_j$.
  Thus~\eqref{eq-PI-on-B} applied
to $v_j$ and $B_j$ gives
\[ 
         \vint_{B_j} |u_j-(u_j)_{B_j}| \,d\mu
         = \vint_{B_j} |v_j-(v_j)_{B_j}| \,d\mu 
         \le C r_{B_j} \biggl( \vint_{\lambda B_j} g^{p} \,d\mu \biggr)^{1/p}.
\]
The result now follows
from the fact that $u_j\to u$ in $L^1(B)$ as $j \to \infty$.
\end{proof}

\section{From global to local assumptions}
\label{sect-rajala}

In this section we show how a recent result due to 
Rajala~\cite[Theorem~1.1]{rajala} can be used to lift results, that
have earlier been obtained under global assumptions, to spaces with only
local assumptions.
This will be useful later in our considerations.
The main idea in this localization approach is to see 
suitable neighbourhoods of points in $X$ as ``good'' metric spaces 
in their own right.
Since balls may be disconnected and need not support a 
Poincar\'e inequality, they do not in general serve as such
good neighbourhoods. 
Even when a ball, or its closure,  is connected it can fail to support a 
Poincar\'e inequality and the measure may fail to be globally doubling on it.
Instead, closures of
the uniform domains 
constructed by Rajala~\cite{rajala} 
will do the job, since they are compact, support global Poincar\'e inequalities and
the measure is globally doubling on them.

Recall that a space is \emph{geodesic}
if every pair of points can be connected by a curve whose
length equals the distance between the points, and that
a \emph{domain} is an open connected set.
A domain $G\subset X$ 
is  \emph{uniform} if there is a constant $A\ge1$
such that  for every pair $x,y\in G$ there is a 
curve $\ga: [0,l_\ga] \to G$ with $\ga(0)=x$ and
$\ga(l_\ga)=y$ such that $l_\ga \le A d(x,y)$ and
\[
\dist(\ga(t),X \setm G) \ge \frac{1}{A} \min\{t, l_\ga-t\}
    \quad \text{for } 0 \le t \le l_\ga.
\]
As usual, $\dist(x,\emptyset)=\infty$.
Moreover, $X$ is \emph{globally doubling}
if there is a constant $N$ such that every ball $B(x,r)$ 
can be covered by 
$N$ balls with radii $\frac{1}{2}r$. 

The following result was proved in~\cite{rajala}
under the assumption  that $X$ is \emph{quasiconvex}.
In particular, it applies to geodesic spaces because their
quasiconvexity constant is $1$.

\begin{thm} \label{thm-rajala}
\textup{(Rajala~\cite[Theorem~1.1]{rajala})}
Let $X$ be a geodesic metric space
and let $U \subset X$ be a bounded domain.
If $U$ is globally doubling and $\eps>0$,
then there is a uniform domain $G$ such that
\[
    \{x \in U : \dist(x,X \setm U) \ge \eps\} \subset G \subset U.
\]
\end{thm}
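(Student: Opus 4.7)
The plan is to build $G$ by a Whitney-type construction inside $U$, choosing a subfamily of Whitney balls that both covers the $\eps$-interior $U_\eps := \{x \in U : \dist(x,X \setm U) \ge \eps\}$ and can be linked by chains of overlapping balls, so that the resulting open set is connected (hence a domain, using that $X$ is geodesic) and uniform.

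First, I would fix a constant $\tau \in (0,1)$ (say $\tau = \frac{1}{20}$) and produce a Whitney cover $\{B_i\}_{i\in I} = \{B(x_i, \tau\, \dist(x_i, X \setm U))\}$ of $U$ with the usual properties: the balls $\frac{1}{5}B_i$ are pairwise disjoint, $\{B_i\}$ covers $U$, and the $B_i$ have bounded overlap with a constant depending only on the doubling constant of $U$. Global doubling of $U$ yields bounded-overlap and local finiteness of the Whitney family. Let $I_\eps \subset I$ be those indices $i$ for which $B_i$ intersects $U_\eps$; enlarge $I_\eps$ to $J$ by adding, for any two indices $i,j \in I_\eps$, all Whitney balls that meet a fixed geodesic from $x_i$ to $x_j$ in $X$ whose image lies inside $U$ (such a geodesic exists whenever $d(x_i,x_j)$ is small compared to the distances to $X \setm U$; globally one chains finitely many such short geodesics using a connecting chain of Whitney balls inside $U$, which exists because $U$ is a domain). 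Define $G := \bigcup_{i \in J} B_i$.

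By construction $U_\eps \subset G \subset U$ and $G$ is open. Connectivity follows because any two of its points lie in balls indexed in $J$ whose centres are joined, inside $G$, by a chain of overlapping Whitney balls supplied by the enlargement step. To verify uniformity, given $x,y \in G$ one first moves inside the Whitney ball containing each endpoint to the respective centre, then follows a chain of consecutively overlapping Whitney balls whose centres approximate a geodesic from $x$ to $y$ in $X$; at each step the jump has length comparable to $\tau\,\dist(x_i, X\setm U)$. Standard Whitney-chain arguments (with the doubling constant replacing volume counts) give a curve $\ga \subset G$ with $\length(\ga) \le A\, d(x,y)$, and along this curve each point lies in some $B_i$, which is at distance at least $(1-\tau)\dist(x_i, X\setm U) \ge c\,\min\{t, l_\ga - t\}$ from $X \setm U$; since $X\setm G \supset X \setm U$, this yields the uniformity inequality with a constant $A$ depending only on the doubling constant of $U$.

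The main obstacle is arranging for the enlargement from $I_\eps$ to $J$ to be both quantitative and not so generous that it enlarges $G$ beyond $U$. One needs the geodesic assumption to produce connecting curves whose Whitney neighbourhoods stay inside $U$, and one must verify that the uniformity constant $A$ depends only on the doubling constant and not on $\eps$, so that passing from global doubling of $U$ to the uniformity of $G$ does not accumulate constants. A related subtlety is that in a general geodesic space (as opposed to $\R^n$), overlap and chaining estimates cannot rely on volumes of Euclidean annuli; instead, one must use the doubling constant of $U$ in a purely metric way, which is exactly what makes Rajala's theorem nontrivial and what limits the result to the geodesic (or, more generally, quasiconvex) setting.
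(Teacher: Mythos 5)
The paper itself does not prove this result; it is Rajala's~\cite[Theorem~1.1]{rajala}, quoted as a black box, with only a brief remark afterwards that for approximation from inside one needs $U$ (rather than $X$) to be globally doubling, because one can apply \cite[Lemma~2.1]{rajala} relative to $U$. So there is no in-paper proof against which to compare your sketch, and what follows assesses it on its own merits.

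Your verification of the cigar condition contains an outright error: since $G\subset U$, we have $X\setm G \supset X\setm U$, hence $\dist(\ga(t), X\setm G) \le \dist(\ga(t), X\setm U)$ --- the inclusion gives an inequality in the \emph{wrong} direction, so a lower bound on the distance to $X\setm U$ does not yield a lower bound on the distance to $X\setm G$. The cigar condition must be checked against $\bdy G$, not $\bdy U$, and the set $G$ you build may hug the connecting curve very thinly; making sure the curve stays deep inside the \emph{constructed} set, not merely inside $U$, is exactly the delicate point. The quasiconvexity claim is also unsupported: a bounded domain in a geodesic doubling space need not be quasiconvex, so a chain of Whitney balls inside $U$ joining $x$ to $y$ can have length uncontrollable in terms of $d(x,y)$ (consider two points on opposite sides of a long thin passage of $U\setm U_\eps$). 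A ``standard Whitney-chain argument'' bounds overlap and the number of balls via doubling, but it does not by itself bound the length of the chain by $d(x,y)$. Two further points: the enlargement from $I_\eps$ to $J$ is not well-defined --- a geodesic of $X$ between Whitney centres need not lie in $U$, and the proposed ``chaining of short geodesics inside $U$'' is precisely the content of the theorem rather than an available step --- and the requirement that the uniformity constant be independent of $\eps$ is neither asserted by the statement nor true in general (as $\eps\to0$, $G$ approximates $U$, which may fail to be uniform).
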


Note that if $U=X$ then $X$ itself is a uniform domain and $G=U=X$ above.
In the definition of uniform domains it is often assumed
that
$G \subsetneq X$.
Allowing $G=X$, as in \cite{rajala},
is convenient
when formulating Theorems~\ref{thm-rajala} and~\ref{thm-rajala-iter}.

In
\cite{rajala} it is assumed that $X$ is globally doubling, and approximation
from outside by uniform domains is also deduced. 
However, when approximating from inside as in Theorem~\ref{thm-rajala},
it is easy to see that in the proof in \cite{rajala}
it is enough to apply \cite[Lemma~2.1]{rajala} with respect to $U$.
It is therefore enough to assume that $U$ is globally doubling, which 
makes it possible to deduce the following result.

\begin{thm} \label{thm-rajala-iter}
Assume that 
the \p-Poincar\'e inequality and the 
doubling property for $\mu$ hold
{\rm(}with constants $\CPI$, $\la$ and $C_\mu${\rm)} within $B_1=B(x_1,r_1)$.
Also assume that $\itoverline{\La B}_1$ is compact,
where $\La=3C_\mu^3\CPI$.

Then there is a bounded uniform domain $G$ such that
\[
\tau B_1 \Subset G \subset \tfrac16 B_1,
\quad \text{where }
\tau=\frac{1}{60\Lambda}.
\]

Moreover, $\mu|_G$ and $\mu|_{\clG}$ are
globally doubling and support
global \p-Poincar\'e inequalities
on the metric spaces $G$ and $\clG$, respectively.
\end{thm}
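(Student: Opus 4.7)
\medskip
\noindent
\emph{Proof plan.}
The idea is to reduce to Rajala's Theorem~\ref{thm-rajala}, which requires a geodesic ambient space, by passing to the inner (length) metric on the compact set $\itoverline{\La B}_1$. First, I would invoke the standard Haj\l asz--Koskela quasiconvexity result: a $p$-Poincar\'e inequality together with the doubling property (within $B_1$) implies that there is a constant $C'=C'(\CPI,\la,C_\mu)$ such that any two points $x,y$ in a suitable inner ball can be joined by a curve inside $\La B_1$ of length at most $C' d(x,y)$. The choice $\La=3C_\mu^3\CPI$ is exactly what makes all the involved chaining and telescoping arguments (to produce such curves from the PI) stay well within the compact set $\itoverline{\La B}_1$. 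This quasiconvexity, combined with compactness, is what will allow the passage to the inner metric.

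Next, I would equip $\itoverline{\La B}_1$ with the inner length metric $\dinK$ (as in the preamble) and observe that, by a Hopf--Rinow type argument using the compactness assumption and the quasiconvexity, $(\itoverline{\La B}_1,\dinK)$ is a compact geodesic space, on which $\dinK$ and $d$ are biLipschitz equivalent with constants depending only on $\La$. The locally doubling measure $\mu$ is then globally doubling on $\itoverline{\La B}_1$ with respect to either metric, by a standard finite-covering argument using compactness. I would then apply Theorem~\ref{thm-rajala} to the bounded inner-metric domain $U\subset (\itoverline{\La B}_1,\dinK)$ obtained, say, as the $\dinK$-interior of $\tfrac16 B_1$, choosing $\eps>0$ small enough so that every point of $\tau B_1$ has $\dinK$-distance at least $\eps$ from the complement of $U$; the factor $\tau=1/(60\La)$ arises from combining the biLipschitz constant between $d$ and $\dinK$ with the inclusions in the theorem. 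This yields a uniform domain $G$ (in the $\dinK$ metric) with $\tau B_1\Subset G\subset \tfrac16 B_1$, which remains a uniform domain in the original metric $d$ after absorbing the biLipschitz constant.

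It remains to verify the two global statements about $\mu|_G$ and $\mu|_{\clG}$. Global doubling is immediate: since $\clG\subset \tfrac16 B_1\subset B_1$ is compact and $\mu$ is doubling within $B_1$, a finite-covering argument (the number of overlapping doublings needed to reach any ball from a fixed one inside $B_1$ is bounded in terms of $C_\mu$ and the diameter of $\clG$) upgrades the doubling constant to a global one on $\clG$, which then restricts to $G$. For the global $p$-Poincar\'e inequality on $G$ (and $\clG$), I would use the standard fact that uniform domains in a doubling metric measure space supporting a local $p$-Poincar\'e inequality themselves support a global $p$-Poincar\'e inequality; this follows by Boman-chain/Jones-type arguments applied to uniform domains, carried out here inside the ``good'' ambient $B_1$. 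Passing to $\clG$ is routine since the PI is preserved under taking closures of domains on which it holds.

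The main obstacle will be the metric bookkeeping in Step~2: verifying both the precise inclusions $\tau B_1\Subset G\subset\tfrac16 B_1$ for the stated value $\tau=1/(60\La)$, and the propagation of all relevant constants through the biLipschitz change of metric and through Rajala's theorem. Everything else is essentially a citation or a short covering argument.
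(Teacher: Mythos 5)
Your overall strategy---pass to the inner metric on $\itoverline{\La B}_1$, use compactness to get a geodesic space, apply Rajala's theorem to an inner ball, then transfer back to $d$---is essentially the same as the paper's. However, there is a genuine gap in your argument for the global doubling of $\mu|_{\clG}$, and a smaller bookkeeping issue in the passage to the inner metric.

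The doubling gap: you claim ``global doubling is immediate'' from a finite-covering argument inside $B_1$. This does not work. Global doubling of $\mu|_{\clG}$ on the metric space $\clG$ means $\mu(B(x,2r)\cap\clG)\le C\,\mu(B(x,r)\cap\clG)$ for all $x\in\clG$, $r>0$, and this is a statement about the measure of the \emph{relative} balls $B(x,r)\cap\clG$, not about full balls $B(x,r)$. A finite-covering argument controls how many doublings in $X$ are needed to exhaust a compact set, but it says nothing about the ratio between $\mu(B(x,r)\cap\clG)$ and $\mu(B(x,r))$ when $x$ is near $\bdy G$; if $G$ were, say, a sharp inward cusp, that ratio could degenerate and the restricted measure would not be doubling. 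What actually makes this work is the \emph{uniformity} of $G$: uniform domains satisfy a corkscrew/measure-density condition, which guarantees $\mu(B(x,r)\cap G)\gtrsim\mu(B(x,r))$ for small $r$ and hence doubling of $\mu|_G$. The paper cites Bj\"orn--Shanmugalingam (Lemmas~2.5 and~4.2 of~\cite{BjShJMAA}) for exactly this; your proposal only invokes uniformity for the Poincar\'e part and misses that it is equally needed for the doubling part. The extension from $G$ to $\clG$ then uses $\mu(\bdy G)=0$, which itself follows from the corkscrew condition (Remark~\ref{rem-mu(bdy)=0}), not from ``routine'' considerations.

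The secondary issue: you say $(\itoverline{\La B}_1,\dinK)$ is geodesic and $\dinK$, $d$ are biLipschitz equivalent there. Neither is quite right as stated: $\itoverline{\La B}_1$ may fail to be rectifiably connected, so $\dinK$ can be infinite between some pairs and the space is not geodesic. The paper works with the rectifiably connected component $Y$ of $x_1$, and the comparability of $d$ and $\dinK$ is only established within $\tfrac15 B_1$ (via the chaining lemma). You should also apply Rajala to an \emph{inner} ball $\Bin$ rather than ``the $\dinK$-interior of $\tfrac16 B_1$'': Rajala's theorem needs a bounded \emph{domain}, i.e.\ a connected open set, and an ordinary ball like $\tfrac16 B_1$ (or its inner-metric interior) need not be connected, whereas inner balls automatically are. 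These are fixable details, but the doubling step above requires an actual change of argument.
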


Note that $\Lambda$ is independent of $\la$,
as in \cite[Lemma~4.9]{BBsemilocal} and
\cite[Theorem~4.32]{BBbook}.
As usual, by $E \Subset G$ we mean that $\itoverline{E}$
is a compact subset of $G$.

\begin{remark}  \label{rem-mu(bdy)=0}
Note that $G$, being uniform, satisfies the 
so-called \emph{corkscrew condition}.
Applying Theorem~2.8 in
Bj\"orn--Shan\-mu\-ga\-lin\-gam~\cite{BjShJMAA} to $A=\clG$ and
letting $\rho\to0$ in \cite[(2.2)]{BjShJMAA} shows that $\mu(\bdy G)=0$.
\end{remark}

\begin{proof}[Proof of Theorem~\ref{thm-rajala-iter}]
Define the \emph{inner metric}
\[
    \dinK(x,y)=\inf \length(\ga),
\]
where the infimum is taken over all 
curves $\ga \subset\itoverline{\La B}_1$ connecting $x$ and $y$.
Let 
\[
Y=\{x \in \itoverline{\La B}_1 : \dinK(x,x_1)<\infty\}
\]
be the rectifiably connected component of $\itoverline{\La B}_1$
  containing $x_1$.
As $\itoverline{\La B}_1$ is compact,
it follows from Ascoli's theorem that $(Y,\dinK)$ is a geodesic
metric space.
By Bj\"orn--Bj\"orn~\cite[Lemma~4.9]{BBsemilocal},
  every pair of points $x,y \in \tfrac15 B_1$ can be connected by a curve
in $\itoverline{\La B}_1$,
of length at most $9\Lambda  d(x,y)$.
Hence, both $\tfrac16 B_1$ and 
\[
\Bin:=  \{x \in Y:\dinK(x,x_1)<\tfrac16 r_1\}
\]
are open and 
$\tau B_1 \Subset \Bin \subset \tfrac16 B_1 \subset Y$.
The reason for using the inner metric is that inner balls are always connected, while
standard balls, such as $\tfrac16 B_1$, need not be connected.

By \cite[Proposition~3.4]{BBsemilocal}, the ball
$\tfrac16 B_1$   is globally doubling.
As $d$ and $\dinK$ are comparable within $\tfrac16 B_1$,
also $(\Bin,\dinK)$ is globally doubling.
Since $(\Bin,\dinK)$ is connected, we
can therefore apply Theorem~\ref{thm-rajala} with respect to $(Y,\dinK)$
and obtain a uniform domain $G$ such that $\tau B_1 \Subset G \subset \Bin$.
Note that since $d$ and $\dinK$ are comparable within $\tfrac15 B_1$
and $\dist(\tfrac16 B_1, X \setm Y) \ge \tfrac{1}{30}r_1$, uniformity
is the same with respect to 
$(Y,\dinK)$ and $(X,d)$
(although the uniformity
constants may be different).

By Bj\"orn--Shan\-mu\-ga\-lin\-gam~\cite[Lemmas~2.5 and~4.2]{BjShJMAA},
$\mu|_G$ is globally doubling on $G$. 
Next, we use
\cite[Theorem~4.4]{BjShJMAA}, to
see that $\mu|_G$ supports a global \p-Poincar\'e inequality on $G$.
Since the proof of \cite[Theorem~4.4]{BjShJMAA} only uses balls
contained (together with their dilations)
in $G$, the proof applies under our assumptions.
As
$\mu(\bdy G)=0$, by Remark~\ref{rem-mu(bdy)=0},
the same
conclusions hold for $\mu|_{\clG}$.
(To see that the Poincar\'e inequality holds on $\clG$,
  one only needs to observe that if
  $g$ is an upper gradient of $u$ on $\clG$, then $g|_G$ is an
  upper gradient of $u|_G$ on $G$,
    see also \cite[Proposition~3.6]{BBnoncomp} for further details.)
\end{proof}

One result 
that can be obtained using Theorem~\ref{thm-rajala-iter} 
is the following extension of Theorem~1.11 in
J\"arvenp\"a\"a--J\"arvenp\"a\"a--Rogovin--Rogovin--Shan\-mu\-ga\-lin\-gam~\cite{JJRRS}.
Since local assumptions are inherited by open subsets, it
directly applies
also to open $\Om \subset X$ (in place of $X$), cf.\ Remark~\ref{rem-loc-spcs}.

\begin{thm} \label{thm-JJRRS-gen}
Assume that $X$ is locally compact and that $\mu$ is locally doubling
and supports a local \p-Poincar\'e inequality.
Let $g \in L^p\loc(X)$ be an upper gradient
of $u\colon X \to \eR$.
Then $u \in L^p\loc(X)$ and $u$ is in particular measurable.
\end{thm}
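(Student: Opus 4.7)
The plan is to reduce the problem with local assumptions on $X$ to one with global assumptions on the closure of a uniform subdomain supplied by Theorem~\ref{thm-rajala-iter}, and then apply the original global result \cite[Theorem~1.11]{JJRRS}.

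Fix $x \in X$. By local compactness together with the local doubling and local \p-Poincar\'e hypotheses, choose $r_1>0$ so small that $\mu$ is doubling and the \p-Poincar\'e inequality holds within $\Lambda B_1$, where $B_1=B(x,r_1)$ and $\Lambda=3C_\mu^3\CPI$, and in addition $\itoverline{\La B}_1$ is compact. Theorem~\ref{thm-rajala-iter} then provides a bounded uniform domain $G$ with $\tau B_1 \Subset G \subset \tfrac16 B_1$ such that the metric measure space $Y:=(\clG,d|_{\clG},\mu|_{\clG})$ is compact (hence complete), globally doubling, and supports a global \p-Poincar\'e inequality.

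Next I transfer the hypothesis on $u$ to $Y$. Since $\clG$ is compact and $g\in\Lploc(X)$, the restriction $g|_{\clG}$ lies in $L^p(Y)$. Every rectifiable curve in $\clG$ is also a rectifiable curve in $X$, so the upper gradient inequality~\eqref{ug-cond} survives restriction, and $g|_{\clG}$ is an upper gradient of $u|_{\clG}$ on $Y$. As $Y$ meets the global hypotheses of \cite[Theorem~1.11]{JJRRS}, that result applies and gives that $u|_{\clG}$ is $\mu|_{\clG}$-measurable; in particular $u$ is measurable on $\tau B_1$. Combining the $p$-Poincar\'e inequality on $Y$ with the fact that $g|_Y\in L^p(Y)$, a standard Sobolev--Poincar\'e chaining argument (or an application of \cite[Proposition~4.13]{BBbook} to obtain $u\in L^1\loc(Y)$ followed by bootstrapping through the Poincar\'e inequality) yields $u|_Y\in L^p\loc(Y)$, and hence $u\in L^p(\tau B_1)$.

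Since $x\in X$ was arbitrary and $X$ is Lindel\"of, countably many such neighbourhoods $\tau B_1$ cover $X$, which gives $u\in L^p\loc(X)$ and, \emph{a fortiori}, measurability of $u$ on $X$. The main technical point is simply that Theorem~\ref{thm-rajala-iter} upgrades the local hypotheses on $X$ to global ones on a compact neighbourhood; once this is in place, both the translation of the upper gradient to $Y$ and the invocation of the global version are routine, since upper gradients (as opposed to \p-weak upper gradients) pass trivially to subspaces.
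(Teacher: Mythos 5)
Your proof follows the same approach as the paper: apply Theorem~\ref{thm-rajala-iter} to produce a compact closure $\clGx$ of a uniform domain on which $\mu|_{\clGx}$ is globally doubling and supports a global \p-Poincar\'e inequality, restrict $g$ (which remains an upper gradient because curves in $\clGx$ are curves in $X$), invoke \cite[Theorem~1.11]{JJRRS}, and patch together via the Lindel\"of property. The only cosmetic difference is that you supply a supplementary Poincar\'e bootstrap to reach $u\in L^p\loc$, whereas the paper reads the $L^p\loc$ conclusion directly out of \cite[Theorem~1.11]{JJRRS}; both routes are correct.
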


\begin{proof}
Let $x \in X$.
It follows from Theorem~\ref{thm-rajala-iter} that there is a 
bounded uniform domain $G_x \ni x$ such that
$\clGx$ is compact and
$\mu|_{\clGx}$ is globally doubling and supports a global \p-Poincar\'e inequality
on $\clGx$.
In particular, $g|_{\clGx}\in L^p(\clGx)$ is an upper gradient of 
$u|_{\clGx}$ in $\clGx$ and Theorem~1.11 in~\cite{JJRRS}
shows that $u|_{\clGx}$ is measurable and belongs to $L^p\loc(\clGx)$.

As $X$ is Lindel\"of it follows that $u$ is measurable on $X$ and
$u\in L^p\loc(X)$.
\end{proof}

Another consequence of Rajala's theorem is a characterization of
  \p-path open sets under local assumptions.
These sets will play a prominent role in our
studies, since they preserve minimal \p-weak upper gradients and 
sets with zero capacity, see 
Lemmas~\ref{lem-gu-on-p-path-open-Xhat} and~\ref{lem-same-zero-cap} below and 
Bj\"orn--Bj\"orn~\cite[Proposition~3.5]{BBnonopen}.
The relation between \p-path open and \p-path almost open sets will be
studied in Section~\ref{sec:p-path-almost-open}.

\begin{deff}   \label{def-p-path-open}
A set $G\subset A $ 
is \p-\emph{path open} in $A \subset X$
if for \p-almost every curve
$\ga\colon [0,l_\ga]\to A$,
the set $\ga^{-1}(G)$ 
is\/ \textup{(}relatively\/\textup{)} open in $[0,l_\ga]$.

Further, $G\subset A $ 
is 
\p-\emph{path almost open}
in $A \subset X$ if for \p-almost every curve
$\ga\colon [0,l_\ga]\to A$,
the set $\ga^{-1}(G)$ is the union
of an open set and a set with zero $1$-dimensional Lebesgue measure.
\end{deff}

The \p-modulus $\Modp(\Ga)$ of the exceptional curve family $\Ga$ can equivalently
be measured within $X$ or $A$, provided that $A$ is equipped with the appropriate
restriction $\mut$ of $\mu$ to $A$. Since $A$ may be nonmeasurable, $\mut$ is
defined by letting
\begin{equation*}
    \mut(\Et)=\inf \{\mu(E): E \supset \Et \text{ and $E$ 
is a Borel set with respect to $X$}\}
\end{equation*}
for Borel sets $\Et$ in $A$, and then completing $\mut$.
This makes $\mut$ into a complete Borel regular measure on $A$, which
coincides with the restriction
  $\mu|_A$ when $A$ is $\mu$-measurable.
It also follows that every Borel function $\rhot$ on $A$ has a Borel extension
$\rho$
to $X$ such that
\[
            \int_A \rhot\,d\mut = \int_X \rho\,d\mu.
\]
Hence $\Mod_{p,A}(\Ga)=\Mod_{p,X}(\Ga)$ as claimed.
The relation between $\mut$ and $\mu$ is quite similar to the relation
between $\mu$ and $\muhat$ as discussed in the beginning of
Section~\ref{sect-Xhat} and 
in the corrigendum of  Bj\"orn--Bj\"orn~\cite{BBnoncomp},
and the relation between $\mu_X$ and $\mu_Y$ in Section~\ref{sect-rem}.

The two properties in Definition~\ref{def-p-path-open}
  are transitive, as shown by the following result.
Note also that it follows from \cite[Proposition~2.45]{BBbook} that if
 $1 \le p < q$ and $G$ is $q$-path open (resp.\ $q$-path almost open) in $X$,
  then $G$ is \p-path open (resp.\ \p-path almost open) in $X$.

\begin{lem} \label{lem-trans}
Assume that $G_1 \subset G_2 \subset G_3$ and that $G_2$ is \p-path
almost open in $G_3$.
Then $G_1$ is \p-path almost open in $G_2$ if and only if it is
\p-path almost open in $G_3$.

The corresponding result also holds if ``\p-path almost open'' is replaced
by ``\p-path open'' throughout.
\end{lem}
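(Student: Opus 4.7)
I would first handle the reverse direction, which is essentially immediate. Every curve in $G_2$ is simultaneously a curve in $G_3$, and as explained in the paragraph following Definition~\ref{def-p-path-open}, the \p-modulus of a curve family takes the same value whether measured in $G_2$, $G_3$, or $X$. Thus the exceptional family for $G_1 \subset G_2$ is contained (viewed as families of curves in $X$) in the exceptional family for $G_1 \subset G_3$, so it has zero \p-modulus; the same holds with ``\p-path almost open'' replaced by ``\p-path open''.

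For the forward direction in the \p-path almost open case, let $\Lambda_1$ denote the exceptional family of curves $\sigma$ in $G_2$ for which $\sigma^{-1}(G_1)$ fails to be open modulo null, and $\Lambda_2$ the corresponding exceptional family for $G_2\subset G_3$; by assumption $\Modp(\Lambda_1)=\Modp(\Lambda_2)=0$. The plan is to introduce the auxiliary family $\Gamma''$ consisting of those curves $\gamma\colon[0,l_\gamma]\to G_3$ that admit indices $0\le a<b\le l_\gamma$ with $\gamma([a,b])\subset G_2$ and $\gamma|_{[a,b]}\in\Lambda_1$, and first show $\Modp(\Gamma'')=0$ via the standard admissible-function argument: any Borel function $\rho$ on $X$ admissible for $\Lambda_1$ satisfies $\int_\gamma\rho\,ds\ge\int_{\gamma|_{[a,b]}}\rho\,ds\ge 1$ for every $\gamma\in\Gamma''$, so $\rho$ is admissible for $\Gamma''$ as well.

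Then for any curve $\gamma\colon[0,l_\gamma]\to G_3$ outside the negligible family $\Lambda_2\cup\Gamma''$, I would write $\gamma^{-1}(G_2)=U\cup N$ with $U$ relatively open in $[0,l_\gamma]$ and $N$ of zero $1$-dimensional Lebesgue measure, so $\gamma^{-1}(G_1)\subset U\cup N$. Decomposing $U$ into its at most countably many connected components $J_k$ and covering each $J_k$ by closed subintervals $[c,d]\subset J_k$ with rational endpoints (adjusted at $0$ or $l_\gamma$ when $J_k$ is half-open), each restriction $\gamma|_{[c,d]}$ maps into $G_2$; if nonconstant, it is a curve in $G_2$ which, since $\gamma\notin\Gamma''$, lies outside $\Lambda_1$, and hence $\gamma^{-1}(G_1)\cap[c,d]$ is open modulo null in $[c,d]$ (the constant case being handled trivially). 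Taking countable unions over $[c,d]$ and then over $k$ shows that $\gamma^{-1}(G_1)\cap U$, and therefore $\gamma^{-1}(G_1)$ itself, is open modulo null in $[0,l_\gamma]$. The \p-path open variant follows by the same argument with ``open modulo null'' replaced by ``open'' throughout and with $N$ empty. The main obstacle I expect is pinning down the correct family $\Gamma''$ and establishing its zero modulus; after that, the decomposition into rational subintervals of each $J_k$ is just careful bookkeeping.
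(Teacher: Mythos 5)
Your proof is correct and follows essentially the same route as the paper's. Your auxiliary family $\Gamma''$ (curves in $G_3$ admitting a subcurve in the exceptional family $\Lambda_1$ for $G_1\subset G_2$) is exactly the paper's family $\Ga'$, and your admissible-function argument for $\Modp(\Gamma'')=0$ is just the unpacking of the subcurve estimate that the paper cites as \cite[Lemma~1.34\,(c)]{BBbook}; the subsequent decomposition of $\ga^{-1}(G_2)$ into open connected components, exhausted by compact (in your case rational-endpoint) subintervals, matches the paper's argument step for step.
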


\begin{proof}
If $G_1$ is \p-path almost open in $G_3$, then
it is \p-path almost open in $G_2$
(in view of the discussion above) simply because
every curve in $G_2$ is a curve in $G_3$.

Conversely, assume that $G_1$ is \p-path almost open in $G_2$.
Let $\Ga_j$, $j=1,2$, be the family of curves $\ga$ in $G_{j+1}$ such
that $\ga^{-1}(G_j)$ is not a union of an open set and
a set of measure zero.
Let $\Ga'$ be the family  of curves in $G_3$ which 
contain a subcurve in $\Ga_1$.
Then by assumption, \cite[Lemma~1.34\,(c)]{BBbook} and the discussion above,
\[
\Mod_{p,G_3}(\Ga') \le\Mod_{p,G_3}(\Ga_1)=\Mod_{p,G_3}(\Ga_2)=0. 
\]

Next take a curve $\ga \colon [0,l_\ga]\to G_3$ such that
$\ga \notin \Ga_2 \cup \Ga'$.
Then $\ga^{-1}(G_2)$ is a union of an open set $A$ and a set of
measure zero.
Since $A \subset \R$, it  can be written as a countable
or finite union of pairwise disjoint open intervals $A_j$.
Each $A_j$ can be written as an increasing countable union of compact intervals,
and since $\ga \notin \Ga'$ we see that $A_j \cap \ga^{-1}(G_1)$ is
a union of an open set and a set of measure zero.
Hence $\ga^{-1}(G_1)$ is a union of an open set  and a set of
measure zero.
As $\Mod_{p,G_3}(\Ga_2 \cup \Ga')=0$, we have shown that $G_1$ is
\p-path almost open in $G_3$.

The \p-path open case is similar.
\end{proof}

Next, we shall
characterize \p-path open sets in terms
of \p-quasiopen and \p-finely open sets, under local assumptions.
Such characterizations have been done under global assumptions,
and as earlier in this section we will show how to ``lift'' them
to local assumptions.

A set $V\subset X$ is \emph{\p-quasiopen} if for every
$\varepsilon>0$ there is an open set $G\subset X$ such that
$\CpX(G)<\varepsilon$ and $G\cup V$ is open. 
Every \p-quasiopen set is measurable by \cite[Lemma~9.3]{BBnonopen}.
The family of \p-quasiopen sets does not form a topology (in general) but
it is closed under countable unions.

If 
$E \subset A$ are bounded subsets of $X$, then 
the \emph{variational capacity} of $E$ with respect to $A$ is
\begin{equation*} 
\cpX(E,A) = \inf_u\int_{X} g_u^p\, d\mu,
\end{equation*}
where the infimum is taken over all $u \in \Np(X)$
such that $u\geq 1$ on $E$ and $u=0$ 
on  $X  \setm A$.
(If no such function $u$ exists then $\cpX(E,A)=\infty$.)

A set $E\subset X$ is  \emph{\p-thin} at $x\in X$ if
\begin{equation}   \label{deff-thin}
\int_0^1\biggl(\frac{\cpX(E\cap B(x,r),B(x,2r))}{\cpX(B(x,r),B(x,2r))}\biggr)^{1/(p-1)}
     \frac{dr}{r}<\infty
\end{equation}
when $p>1$, and if
\begin{equation}   \label{deff-thin-p1}
	\lim_{r\to 0}\frac{\capp_1^X(E\cap B(x,r),B(x,2r))}{\capp_1^X(B(x,r),B(x,2r))}=0
	\end{equation}
when $p=1$.
(The
quotients in~\eqref{deff-thin} and~\eqref{deff-thin-p1} are
interpreted as $1$ if the denominators
therein are zero.)
Note that,
under the assumptions of Theorem~\ref{thm-BBM-gen} below,
 $\cpX(B(x,r),B(x,2r))$ is comparable to
$\mu(B(x,r))/r^p$ 
for sufficiently small $r$,
by
e.g.\ \cite[the proof of Proposition~6.16]{BBbook}, 
and so the latter quantity could also be
used
in~\eqref{deff-thin} and~\eqref{deff-thin-p1},
as was done in e.g.\
Lahti~\cite{LahJMPA}.

A set $V\subset X$ is \emph{\p-finely open} if
$X\setminus V$ is \p-thin at each point $x\in V$.
The family of \p-finely open sets forms the \p-fine topology.

The following theorem gives the
equivalence of \ref{h-2}--\ref{h-4} in Theorem~\ref{thm-path-almost-open-char-intro},
since $\mu(Z)=0$ whenever $\CpX(Z)=0$ 
(this follows directly from Definition~\ref{deff-Sob-cap}).

\begin{thm} \label{thm-BBM-gen}
Assume that $X$ is locally compact and that $\mu$ is locally doubling
and supports a local \p-Poincar\'e inequality.
Let $U\subset X$.
Then the following are equivalent.
\begin{enumerate}
\item \label{it-path}  $U$ is \p-path open.
\item \label{it-quasi} $U$ is \p-quasiopen.
\item \label{it-pfine} $U=V\cup Z$, where $V$ is \p-finely open and $\CpX(Z)=0$.
\end{enumerate}
\end{thm}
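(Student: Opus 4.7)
The plan is to reduce to the globally doubling, globally Poincar\'e setting via Rajala's approximation from Theorem~\ref{thm-rajala-iter}, and then invoke the corresponding equivalences already established under global assumptions from Bj\"orn--Bj\"orn--Latvala~\cite{BBLat2}, Bj\"orn--Bj\"orn--Mal\'y~\cite{BBMaly}, and Lahti~\cite{LahJMPA}.

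First I would apply Theorem~\ref{thm-rajala-iter} at every $x\in X$ to obtain a bounded uniform domain $G_x\ni x$ such that $\clGx$ is compact and $\mu|_{\clGx}$ is globally doubling and supports a global \p-Poincar\'e inequality on $\clGx$. Using the Lindel\"of property of $X$, I extract a countable subcover $\{G_j\}_{j=1}^\infty$ of $X$. Each $G_j$ is open in $X$, locally compact, and inherits the global doubling and Poincar\'e properties from $\clGj$; hence the equivalence of \ref{it-path}--\ref{it-pfine} holds for subsets of $G_j$ (viewed as a metric space in its own right) by the cited results.

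Next I would verify that each of \ref{it-path}--\ref{it-pfine} holds in $X$ if and only if the corresponding property holds for $U\cap G_j$ in $G_j$ for every $j$. For \ref{it-path}, the open set $G_j$ is itself \p-path open in $X$, so Lemma~\ref{lem-trans} gives both directions; moreover, countable unions of \p-path open sets remain \p-path open because the union of countably many zero-modulus curve families has zero \p-modulus. For \ref{it-quasi}, open sets in $G_j$ are open in $X$ (as $G_j$ is open in $X$), and the class of \p-quasiopen sets is closed under countable unions, which together with countable subadditivity of $\CpX$ gives the gluing; the restriction direction is immediate. For \ref{it-pfine}, \p-thinness at $x\in G_j$ is determined by the variational capacities $\capp_p(\cdot,B(x,2r))$ for small $r$ with $B(x,2r)\Subset G_j$; for such pairs every admissible competitor vanishes outside $B(x,2r)$ and thus extends trivially, so $\capp_p^X$ and $\capp_p^{G_j}$ coincide on such pairs and the limit/integral in \eqref{deff-thin}--\eqref{deff-thin-p1} yields the same thinness whether computed in $X$ or in $G_j$. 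Countable unions of sets of zero $\CpX$-capacity remain of zero capacity.

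The main hurdle will be the careful matching of the Sobolev and variational capacities between $G_j$ and $X$, most notably showing that $Z\subset G_j$ with $C_p^{G_j}(Z)=0$ already satisfies $\CpX(Z)=0$. I expect this to follow by multiplying admissible test functions on $G_j$ with a Lipschitz cutoff compactly supported in $G_j$, exhausting $Z$ by open subsets of $G_j$ that stay away from $\bdy G_j$, and using the global capacity estimates available on $\clGj$ together with $\mu(\bdy G_j)=0$ from Remark~\ref{rem-mu(bdy)=0}. Once this capacity matching is established, the three equivalences glue automatically from the equivalences on each $G_j$.
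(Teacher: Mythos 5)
Your overall strategy matches the paper's: localize via Rajala's theorem (Theorem~\ref{thm-rajala-iter}), use the Lindel\"of property to extract a countable cover by uniform domains $G_j$, apply the known characterizations on each piece, and then glue by countable unions together with capacity and thinness matching. The key references you invoke (\cite{BBMaly}, \cite{BBLat2}, \cite{LahJMPA}, together with \cite{HaKi} for $p=1$) are also what the paper uses.

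There is, however, a genuine gap in how you deploy those references. You propose to apply the equivalence results to $G_j$ ``viewed as a metric space in its own right''. All of the cited theorems require the ambient space to be \emph{complete} (indeed proper), and the open set $G_j$ is generally noncomplete. The object that actually satisfies the hypotheses is the closure $\clGj$, which by Theorem~\ref{thm-rajala-iter} is compact and carries a globally doubling measure supporting a global \p-Poincar\'e inequality. The paper therefore works throughout with $\clGj$ as the ambient space for the cited theorems, and translates properties between $\clGj$, $G_j$ and $X$: for \p-quasiopenness this is \cite[Proposition~3.3]{BBMaly} applied both with $\clGj$ and with $X$ as the underlying space; for \p-thinness it is the identity $\cpclGj(A,B(x,2r))=\cpX(A,B(x,2r))$ whenever $B(x,2r)\subset G_j$ and $A\subset B(x,r)$; and for zero-capacity sets it is \cite[Lemma~2.24]{BBbook}, in place of your cutoff-and-exhaustion argument (which would also work but is not needed). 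One further simplification in the paper: for the implication \ref{it-quasi} $\imp$ \ref{it-path} it just cites Shanmugalingam \cite[Remark~3.5]{Sh-harm}, which holds with no assumptions on $X$, so there is no need to glue \p-path openness across the $G_j$.
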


When $X$ is complete and $\mu$ is globally doubling
and supports a global \p-Poincar\'e inequality,
these characterizations are due to
  Bj\"orn--Bj\"orn--Latvala~\cite[Theorem~4.9]{BBLat1}, \cite[Theorem~1.4]{BBLat2},
  Bj\"orn--Bj\"orn--Mal\'y~\cite[Theorem~1.1]{BBMaly},
  Shanmugalingam~\cite[Remark~3.5]{Sh-harm},
  and Lahti~\cite[Corollary~6.12]{LahJMPA}
  combined with Hakkarainen--Kinnunen~\cite[Theorems~4.3 and~5.1]{HaKi}.
We will use these results, and the proof below 
just shows how to lift them to local assumptions, without repeating the arguments.

\begin{proof}
We start by some preliminary observations.
By Theorem~\ref{thm-rajala-iter}, for every $x \in X$ there is a 
bounded uniform domain $G_x\ni x$ such that $\clGx$ is compact and
$\mu|_{\clGx}$ is globally doubling and supports a global \p-Poincar\'e inequality
on $\clGx$.
As $X$ is Lindel\"of, there is a countable  cover 
$\{G_{j}\}_{j=1}^\infty$ of $X$, where $G_j=G_{x_j}$.

We also note for later use that Proposition~3.3 in~\cite{BBMaly}
(applied both to $\clGj$ and to $X$ as the underlying space) implies that
$U\cap G_j$ is \p-quasiopen with respect to $\clGj$ 
if and only if it is \p-quasiopen with respect to $G_j$, 
which in turn is equivalent to it being \p-quasiopen with respect to $X$.

\ref{it-path} $\imp$ \ref{it-quasi}
For each $j$, the set $U_j:=U \cap G_j$ is \p-path open in  $\clGj$.
By Theorem~1.1
in~\cite{BBMaly},
we see that $U_j$ is \p-quasiopen
with respect to $\clGj$, and by the above argument also with respect to $X$.
Hence $U = \bigcup_{j=1}^\infty U_j$ is \p-quasiopen in~$X$.

\ref{it-quasi} $\imp$ \ref{it-path} 
This is proved in
Shanmugalingam~\cite[Remark~3.5]{Sh-harm},
without any assumptions on $X$.

To prove the equivalence with \ref{it-pfine}, note that
in the case $p>1$,
a set $W\subset G_j$ is \p-finely open with respect to $\clGj$ 
if and only if for every $x\in W$,
\[
\int_0^{r_x}\biggl(\frac{\cpclGj(B(x,r) \setm W,B(x,2r))}
           {\cpclGj(B(x,r),B(x,2r))}\biggr)^{1/(p-1)}
  \frac{dr}{r} <\infty,
\]
where $r_x>0$ is such that $B(x,2r_x)\subset G_j$.
Clearly, for $0<r<r_x$ and  $A \subset B(x,r)$,
\begin{equation} \label{eq-cpclGj}
\cpclGj(A,B(x,2r))=\cpX(A,B(x,2r)),
\end{equation}
and hence $W$ is \p-finely open with respect to $\clGj$ if and only if
it is \p-finely open with respect to $X$.
The equality~\eqref{eq-cpclGj}
  holds also in the case $p=1$ and implies directly that
$W\subset G_j$
is $1$-finely open with respect to $\clGj$ if and only if
	it is $1$-finely open with respect to $X$.

\ref{it-quasi} $\imp$ \ref{it-pfine}
By the above argument, $U_j:= U\cap G_j$ is \p-quasiopen with respect to $\clGj$.
Theorem~4.9 in~\cite{BBLat1}
(for $p>1$) and~\cite[Corollary~6.12]{LahJMPA} 
combined with~\cite[Theorems~4.3 and~5.1]{HaKi} (for $p=1$)
show that
it can be written as $U_j = V_j\cup Z_j$, where $V_j$ is
\p-finely open with respect to $\clGj$ (and equivalently $X$) and $\CpclGj(Z_j)=0$.
Hence $\bigcup_{j=1}^\infty V_j$ is \p-finely open with respect to $X$.
Moreover, it follows from e.g.\ \cite[Lemma~2.24]{BBbook} that 
the capacities  $\CpclGj$ and $\CpX$ have the same zero
sets in $G_j$ and so $\CpX(\bigcup_{j=1}^\infty Z_j)=0$.
Since $U=\bigcup_{j=1}^\infty V_j \cup \bigcup_{j=1}^\infty Z_j$, \ref{it-pfine} holds.

\ref{it-pfine} $\imp$ \ref{it-quasi}
For each $j$, the set $V\cap G_j$ is \p-finely open in $X$ and thus in $\clGj$, by the
above observation.
Also 
\[
\CpclGj(Z\cap G_j)\le\CpX(Z)=0.
\]
It then follows 
from~\cite[Theorem~1.4]{BBLat2}
(for $p>1$)
 and~\cite[Corollary~6.12]{LahJMPA}
  combined 
with~\cite[Theorems~4.3 and~5.1]{HaKi} (for $p=1$)
that the set
\(
U_j:=(V\cap G_j) \cup (Z\cap G_j)
\)
is \p-quasiopen with
respect to $\clGj$, and thus also with respect to $X$, by the above argument.
Hence, $U=\bigcup_{j=1}^\infty U_j$ is \p-quasiopen in $X$.
\end{proof}

Theorems~1.2 and~1.3 in~\cite{BBMaly} can be extended similarly.
See also Corollary~\ref{cor-qcont} below and
Bj\"orn--Bj\"orn~\cite[Theorem~9.1]{BBsemilocal}.

\section{Extending \texorpdfstring{$N^{1,1}$}{N}-functions
to the completion \texorpdfstring{$\Xhat$}{X}}
\label{sect-Xhat}

The main goal of this section is to prove
  Theorem~\ref{thm-intro}.
We let $\Xhat$ be the completion of $X$ with respect to the metric $d$.
The metric immediately extends to $\Xhat$.
We extend the measure to $\Xhat$ by defining
\[
\muhat(E)=\mu(E\cap X)\quad \text{for every Borel set }E\subset\Xhat,
\]
and then
complete it to obtain a Borel regular measure $\muhat$.
Saksman~\cite[Lemma~1]{saksman} used a similar
construction when studying globally doubling measures.

Now $\Xhat\setminus X$ either has zero $\muhat$-measure or is
$\muhat$-nonmeasurable. 
In both cases, $\muhatin(\Xhat \setm X)=0$,
where the inner measure $\muhatin$ is defined by
\begin{align}
  \muhatin(E)&=\sup \{ \muhat(A): A \subset E \text{ is $\muhat$-measurable}\}
    \nonumber\\
  & =\sup \{ \muhat(A): A \subset E \text{ is a Borel set in $\Xhat$}\}.
  \label{eq-muhat-Borel}
\end{align}
The latter equality follows from
the fact that $\muhat$ is a complete Borel regular measure.
Moreover,
\[
\muhat(E)=\mu(E\cap X)\quad \text{for every }\muhat\text{-measurable set }
E\subset\Xhat,
\]
and thus for $E\subset X$ we have
\begin{equation} \label{eq-mu-iff-muhat=0}
\mu(E)=0
\quad \text{if and only if}
\quad \muhat(E)=0.
\end{equation}

It also follows that every 
$\mu$-measurable (resp.\ Borel) function $u\colon X \to \eR$
has a $\muhat$-measurable (resp.\ Borel)  extension $\uhat\colon \Xhat \to \eR$
such that $\uhat|_X=u$ and 
\begin{equation}  \label{eq-int-X-Xhat-equal}
\int_{\Xhat} \uhat \, d\muhat = \int_X u\,d\mu,
\end{equation}
whenever at least one of the integrals exists.
Conversely, it follows from the above definition of $\muhat$ that
$v|_X$ is $\mu$-measurable (resp.\ Borel) and
\begin{equation}  \label{eq-int-X-Xhat-equal2}
\int_X v\,d\mu = \int_{\Xhat} v \, d\muhat,
\end{equation}
whenever $v\colon\Xhat \to \eR$ is $\muhat$-measurable (resp.\ Borel)
and one of the integrals exists.
See the corrigendum of
 Bj\"orn--Bj\"orn~\cite{BBnoncomp} for further details;
the $\muhat$-nonmeasurable case was unfortunately overlooked in the original
paper.

The following  two auxiliary results
relate notions on $\Xhat$
  to the same notions on \p-path (almost) open sets.

\begin{lem}    \label{lem-gu-on-p-path-open-Xhat}
  Assume that $\Om \subset \Xhat$ is $\muhat$-measurable
    and \p-path almost open in $\Xhat$, $p\ge1$, 
and that $u \in \Dploc(\Om)$.
If $G\subset \Om\cap X$
is $\mu$-measurable and \p-path almost open in $\Om$, 
then the minimal \p-weak upper gradients $g_{u,G}$ and $\ghat_{u,\Om}$
of $u$ with respect to $(G,\mu)$
and $(\Om,\muhat)$, respectively, coincide a.e.\ in $G$.
\end{lem}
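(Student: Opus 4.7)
The plan is to prove separately the two inequalities $g_{u,G}\le\ghat_{u,\Om}$ and $\ghat_{u,\Om}\le g_{u,G}$ a.e.\ in $G$. As a preliminary remark, since $G\subset X$ and $\muhat(E)=\mu(E\cap X)$ for Borel sets, the measures $\mu$ and $\muhat$ agree on $G$, so $L^p$-integrals over subsets of $G$ coincide for the two measures. Moreover, any nonnegative Borel function $\rho$ on $\Om$ restricts to a nonnegative Borel function on $G$, and hence for every curve family $\Ga$ consisting of curves contained in $G$ one has $\Mod_{p,G}(\Ga)\le\Mod_{p,\Om}(\Ga)$.

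For the easier direction, I claim $\ghat_{u,\Om}|_G$ is a \p-weak upper gradient of $u$ on $(G,\mu)$. Every curve in $G$ is also a curve in $\Om$, and the family of curves in $\Om$ on which the upper gradient inequality with $\ghat_{u,\Om}$ fails has $\Mod_{p,\Om}$-measure zero; its subfamily of curves lying in $G$ therefore has $\Mod_{p,G}$-measure zero by the modulus comparison above. Since $\ghat_{u,\Om}|_G\in\Lploc(G,\mu)$, minimality of $g_{u,G}$ yields $g_{u,G}\le\ghat_{u,\Om}$ a.e.\ in $G$.

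For the reverse inequality, I would define
\[
g:=g_{u,G}\chi_G+\ghat_{u,\Om}\chi_{\Om\setm G} \quad \text{on }\Om,
\]
so that $g\le\ghat_{u,\Om}$ a.e.\ by the previous step, and show that $g$ is a \p-weak upper gradient of $u$ on $(\Om,\muhat)$. The minimality of $\ghat_{u,\Om}$ will then force $\ghat_{u,\Om}\le g$ a.e.\ in $\Om$, giving $\ghat_{u,\Om}\le g_{u,G}$ a.e.\ in $G$. To verify the upper gradient inequality for $g$, I would fix a curve $\ga\colon[0,l]\to\Om$ avoiding the following $\Mod_{p,\Om}$-null exceptional families: (i) curves for which $\ga^{-1}(G)$ is not the union of an open set and a Lebesgue null set, which is zero modulus by \p-path almost openness of $G$ in $\Om$; (ii) curves along which $u$ is not absolutely continuous with upper gradient $\ghat_{u,\Om}$, including all subcurves, via \cite[Lemma~1.34(c)]{BBbook}; (iii) curves possessing a subcurve in $G$ on which the upper gradient inequality with $g_{u,G}$ fails---this family has zero $\Mod_{p,\Om}$-measure, since any admissible Borel function on $G$ extends by zero to an admissible Borel function on $\Om$ of the same $L^p$-norm, using that $\mu$ and $\muhat$ agree on $G$; and (iv) curves on which the a.e.\ inequality $\ghat_{u,\Om}\ge g_{u,G}$ on $G$ fails on a subset of $\ga^{-1}(G)$ of positive Lebesgue measure, excluded via Fuglede's lemma.

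For such a generic $\ga$, I would write $\ga^{-1}(G)=U\cup N$ with $U=\bigsqcup_i(a_i,b_i)$ open and $|N|=0$. Applying the upper gradient inequality for $g_{u,G}$ to the subcurves $\ga|_{[a_i+\de,b_i-\de]}\subset G$ and letting $\de\to0$, using continuity of $u\circ\ga$, gives the bound on each $(a_i,b_i)$. On the complement $K_n=[0,l]\setm\bigcup_{i\le n}(a_i,b_i)$, which consists of finitely many closed intervals, the upper gradient inequality with $\ghat_{u,\Om}$ applies on each piece. Telescoping and letting $n\to\infty$ via dominated convergence yields
\[
|u(\ga(0))-u(\ga(l))|\le\int_{\ga^{-1}(G)}g_{u,G}(\ga(r))\,dr+\int_{\ga^{-1}(\Om\setm G)}\ghat_{u,\Om}(\ga(r))\,dr=\int_\ga g\,ds,
\]
using $|N|=0$ to identify the integrals. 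The main obstacle is the controlled countable telescoping across the intervals in $U$, together with the bookkeeping needed to transfer all \p-modulus-zero exceptional families correctly between $(G,\mu)$ and $(\Om,\muhat)$; the \p-path almost openness of $G$ in $\Om$ is precisely what allows the decomposition of $[0,l]$ into $G$- and $(\Om\setm G)$-pieces up to a Lebesgue null set.
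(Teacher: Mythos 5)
Your proposal is correct and follows essentially the same route as the paper, which proves the lemma by invoking the argument of Proposition~3.5 in Bj\"orn--Bj\"orn~\cite{BBnonopen} (the restriction direction plus the gluing argument across the open component intervals of $\ga^{-1}(G)$ and the complementary closed pieces, with a telescoping estimate), supplemented by the single observation that $\Mod_{p,G}(\Ga)=\Mod_{p,\Om}(\Ga)$ for curve families $\Ga$ in $G$; you use the same decomposition and use both directions of that modulus comparison (the inequality $\Mod_{p,G}\le\Mod_{p,\Om}$ in the easy direction, and the reverse one, via Borel extension of admissible functions, to transfer the exceptional family in item (iii)). One place where a bit more care is needed than your sketch indicates is the $\muhat$-measurability of the glued function $g=g_{u,G}\chi_G+\ghat_{u,\Om}\chi_{\Om\setm G}$: since $X$ (and hence $G$) need not be $\muhat$-measurable in $\Xhat$, one should replace $G$ by a Borel subset of $G$ of full $\mu$-measure and use a Borel extension of $g_{u,G}$ as in the discussion of $\mut$ preceding the lemma; the curvewise verification is unaffected by a $\mu$-null modification of $G$ thanks to Fuglede's lemma, which you already invoke.
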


Note that by Lemma~\ref{lem-trans}, $G$ is \p-path almost
  open in $\Om$ if and only if it is \p-path almost
  open in $\Xhat$.

\begin{proof}
This is proved verbatim as in
Proposition~3.5 in Bj\"orn--Bj\"orn~\cite{BBnonopen}, with the obvious
interpretations of the integrals with respect to $\muhat$.
The only
additional observation needed is that if $\Ga$ is a family of curves
in $G$ then by \eqref{eq-int-X-Xhat-equal}
and \eqref{eq-int-X-Xhat-equal2},
\begin{equation}   \label{eq-compare-Mod}
\Mod_{p,G}(\Ga) = \inf_\rho \int_G \rho^p\,d\mu
= \inf_\rho \int_{\Om \cap X} \rho^p \,d\mu
= \inf_{\hat{\rho}} \int_{\Om} \hat{\rho}^p \,d\muhat
= \Mod_{p,\Om}(\Ga), 
\end{equation}
where the infima are taken over all $\rho\in L^p(G,\mu)$,
$\rho\in L^p(\Om \cap X,\mu)$ and $\hat{\rho}\in L^p(\Om,\muhat)$
satisfying for all $\ga\in \Ga$,
\[
\int_\ga \rho\,ds \ge 1 
\quad \text{and} \quad
\int_\ga \hat{\rho}\,ds \ge 1, 
\quad \text{respectively.}
\qedhere
\]
\end{proof}

\begin{lem}  \label{lem-same-zero-cap}
Let $G\subset X$ be $\mu$-measurable and \p-path open in
$\Xhat$, $p\ge1$, and $E\subset G$.
Then $\CpG(E)=0$ if and only if $\CpXhat(E)=0$.
\end{lem}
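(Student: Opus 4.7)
The plan is to derive the equivalence by comparing the $p$-moduli of the families of nonconstant rectifiable curves meeting $E$ in $G$ and in $\Xhat$, respectively. The principal tool is the standard characterization (see, e.g., Proposition~1.48 in~\cite{BBbook}) that for any metric measure space $(Y,\mu_Y)$ and $A\subset Y$, one has $\CpY(A)=0$ if and only if $\mu_Y(A)=0$ together with the vanishing of the $p$-modulus of a suitable family of nonconstant rectifiable curves in $Y$ meeting $A$. Since $E\subset G\subset X$, \eqref{eq-mu-iff-muhat=0} immediately gives $\mu(E)=0 \eqv \muhat(E)=0$, so the task reduces to showing that the two $p$-moduli coincide.

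One direction is straightforward: every rectifiable curve in $G$ is also a rectifiable curve in $\Xhat$, so any admissible Borel function $\hat\rho\in L^p(\Xhat,\muhat)$ restricts to an admissible $\rho:=\hat\rho|_G\in L^p(G,\mu)$ with no increase of the $L^p$-norm, since $\muhat$ agrees with $\mu$ on Borel subsets of $X$ by the construction of $\muhat$ and by~\eqref{eq-int-X-Xhat-equal2}. Hence $\Mod_{p,\Xhat}=0$ forces $\Mod_{p,G}=0$, and thus $\CpXhat(E)=0 \imp \CpG(E)=0$.

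For the reverse implication, I would take an admissible Borel $\rho\in L^p(G,\mu)$ with small $L^p$-norm and extend it by zero to a $\muhat$-measurable function $\hat\rho$ on $\Xhat$, which preserves the $L^p$-norm by~\eqref{eq-int-X-Xhat-equal}. The key point is to verify admissibility for $p$-a.e.\ curve in $\Xhat$ meeting $E$. By $p$-path openness of $G$ in $\Xhat$, for $p$-a.e.\ such curve $\ga\colon[0,l_\ga]\to\Xhat$ the preimage $\ga^{-1}(G)$ is relatively open in $[0,l_\ga]$. Picking any $t_0\in\ga^{-1}(E)\subset\ga^{-1}(G)$ and choosing a closed subinterval $[a',b']\subset\ga^{-1}(G)$ with $t_0\in[a',b']$ and $a'<b'$, arc-length parameterization ensures that $\ga|_{[a',b']}$ is a nonconstant rectifiable curve in $G$ meeting $E$. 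Admissibility of $\rho$ on $G$ therefore yields $\int_\ga\hat\rho\,ds\ge\int_{\ga|_{[a',b']}}\rho\,ds\ge 1$, and letting the $L^p$-norm of $\rho$ shrink to zero gives the desired vanishing of $\Mod_{p,\Xhat}$ on the family of curves in $\Xhat$ meeting $E$.

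The main technical obstacle will be the measure-theoretic bookkeeping on $\Xhat$: arranging $\hat\rho$ to be Borel, or at least $\muhat$-measurable, when $G$ is only $\mu$-measurable and $\Xhat\setm X$ may itself be $\muhat$-nonmeasurable. This is handled via the completion procedure described at the beginning of the section, which allows $G$ to be written as a Borel subset of $\Xhat$ modulo a $\muhat$-null set, so that $\rho$ and hence $\hat\rho$ can be redefined on a negligible set to become Borel without affecting any of the relevant integrals or line integrals. Minor additional care is needed in selecting $[a',b']$ when $t_0\in\{0,l_\ga\}$, but in each case the existence of a relatively open neighbourhood of $t_0$ inside $\ga^{-1}(G)$ makes the construction go through.
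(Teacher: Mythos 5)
Your proposal is correct and follows essentially the same route as the paper: both reduce to Proposition~1.48 of~\cite{BBbook}, note that the measure-zero condition transfers via~\eqref{eq-mu-iff-muhat=0}, observe that one modulus inequality is trivial, and derive the nontrivial inequality $\Mod_{p,\Xhat}(\Ga^{\Xhat}_E)\le\Mod_{p,G}(\Ga^G_E)$ from \p-path openness, using the fact that \p-a.e.\ curve in $\Xhat$ through $E$ contains a nonconstant subcurve in $G$ through $E$. The paper simply cites the general subcurve comparison \cite[Lemma~1.34\,(c)]{BBbook} and the identity~\eqref{eq-compare-Mod} where you spell out the admissible-function argument directly; the content is the same.
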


\begin{proof}
By \cite[Proposition~1.48]{BBbook}, $\CpG(E)=0$ if and only if both
$\mu(E)=0$ and $\Mod_{p,G}(\Ga^G_E)=0$, where $\Ga^G_E$ consists of all
curves $\ga\subset G$ which hit $E$, i.e.\ $\ga^{-1}(E)\ne\emptyset$.
A similar equivalence holds for $\CpXhat(E)=0$ and
\[
\Ga^{\Xhat}_E = \{\ga\subset\Xhat: \ga^{-1}(E)\ne\emptyset\}.
\]
Since $G$ is \p-path open in $\Xhat$, for $\Mod_{p,\Xhat}$-almost all
curves in $\Ga^{\Xhat}_E$, the preimage $\ga^{-1}(G)$ is relatively
open in $[0,l_\ga]$ and nonempty, and thus $\ga$ contains a nonconstant
subcurve $\ga'\in \Ga^G_E$.
Hence, by \cite[Lemma~1.34\,(c)]{BBbook} and \eqref{eq-compare-Mod},
\[
\Mod_{p,\Xhat}(\Ga^{\Xhat}_E) \le \Mod_{p,G}(\Ga^G_E).
\]
The reverse inequality is trivial.
Together with \eqref{eq-mu-iff-muhat=0}, this concludes the proof.
\end{proof}

The following examples show that there is no hope to obtain
Lemma~\ref{lem-same-zero-cap} for $\mu$-measurable sets
that are only \p-path almost open in $\Xhat$.

\begin{example}
Let $X=\R^n$ (unweighted), $p\ge 1$, $E=\{x\in \R^n: |x|=1\}$ and
\[
G =\R^n \setm 
\bigcup_{k=1}^\infty \{x:  |x|=1 \pm 2^{-k}\}.
\]
Then $G$ is 
the union of
an open set and a set of Lebesgue measure zero,
and is thus \p-path almost open for all $p\ge1$, by
Theorem~\ref{thm-path-almost-open-char-intro}.
Moreover,
$\Xhat=\R^n$ and $\CpXhat(E)>0=\CpG(E)$.
Indeed, the characteristic function $\chi_E\in \Np(G)$
is admissible for $\CpG(E)$ and has zero as a \p-weak upper gradient
with respect to $G$.
At the same time, the $(n-1)$-dimensional Hausdorff measure of $E$ is nonzero, and so by
Adams~\cite[(12), p.~122]{Adams88} or
Hakkarainen--Kinnunen~\cite[Theorems~4.3 and~5.1]{HaKi}, 
$\CpXhat(E)>0$ holds for $p=1$ and thus for all $p \ge 1$.
When $p>n$, one can also choose $E=\{0\}$ and 
\[
  G =\R^n \setm 
\bigcup_{k=1}^\infty \{x:  |x|=2^{-k}\}
\quad \text{or} \quad G=\{0\}.
\]
\end{example}

\begin{example}
For $\al>1$, let 
$G=\{x=(x',x_n)\in\R^n: |x'|\le x_n^\al\le1\}$ be the closed cusp
in $X=\Xhat=\R^n$, $n \ge 2$, equipped with the 
measure $d\mu(x)=|x|^\be\,dx$, where $\be>-n$.
Note that $\mu$ is globally doubling and 
supports a global 
$1$-Poincar\'e inequality on $\R^n$,
by Corollary~15.35 in
Heinonen--Kilpel\"ainen--Martio~\cite{HeKiMa} and  Theorem~1 in Bj\"orn~\cite{JBFennAnn}.
Since $G$ is the union of
an open set and a set of Lebesgue measure zero,
it is \p-path almost open for all $p\ge1$, by
Theorem~\ref{thm-path-almost-open-char-intro}. 
Testing with $u_j(x)=\min\{1,-(\log x_n)/j\}$
shows that 
\[
\CpG(\{0\})=0 \quad \text{if $1< p\le  \al (n-1)+\be+1$
or $1=p<\al (n-1)+\be+1$,}
\]
while $\CpXhat(\{0\})>0$ for $p>\max\{n+\be,1\}$, by 
\cite[Example~2.22]{HeKiMa},
and for $p=1 \ge n+\be$, by 
Hakkarainen--Kinnunen~\cite[Theorems~4.3 and~5.1]{HaKi}.
Note that for each $p\ge1$ it is possible to find $\be>-n$
so that $\CpXhat(\{0\})>0=\CpG(\{0\})$.
\end{example}

Recall that for an open set $\Om$ in $X$, we let 
\[
\Omhat=\Xhat \setm \itoverline{X \setm \Om},
\]
where the closure is taken in $\Xhat$. 
This makes $\Omhat$ into 
the largest open set in $\Xhat$ such that
$\Om = \Omhat \cap X$.
Note that $X^\wedge=\Xhat$.
We denote balls with respect to $\Xhat$ by $\Bhat$ or 
$\Bhat(x,r)=\{y\in\Xhat : d(x,y)<r\}$,
and balls with respect to $X$ by $B$.
The inclusion $\Bhat(x,r)\subset B(x,r)^\wedge$ can be strict.

If a function $u\colon\Xhat \to \eR$
has a ($1$-weak) upper gradient $g$ on $\Xhat$,
then clearly $g|_X$ is a ($1$-weak) upper gradient of $u|_X$.
The converse is not true in general, as seen e.g.\ in
$X=\R\setm\Q\subset\R=\Xhat$,
but Theorem~\ref{thm-intro} provides
  a converse under suitable assumptions.

For $p>1$ the result corresponding to 
Theorem~\ref{thm-intro} was obtained in
Bj\"orn--Bj\"orn~\cite[Theorem~4.1]{BBnoncomp},
where the reflexivity of $L^p$ 
was used through the application of \cite[Lemma~6.2]{BBbook}).
We shall now explain how Theorem~\ref{thm-intro}
can be obtained for $p=1$
using the Dunford--Pettis theorem (see e.g.\ 
Ambrosio--Fusco--Pallara~\cite[Theorem~1.38]{AFPbook})
instead of reflexivity.
In both cases, the proof is based on discrete convolutions 
and their gradients, as in 
Koskela~\cite[Proof of Theorem~C]{Koskela} and
Heikkinen--Koskela--Tuominen~\cite{HKT}.

\begin{deff}
Given a measurable set $H\subset X$, a sequence 
$\{g_i\}_{i=1}^\infty$ of functions in $L^1(H)$ is 
\emph{equi-integrable} if the following two conditions are satisfied\textup{:}
\begin{enumerate}
\item 
For any $\eps>0$ there is a measurable set $A\subset H$ with $\mu(A)<\infty$
such that 
\[
\int_{H\setminus A}|g_i|\,d\mu<\eps \quad\textrm{for }i=1,2,\ldots.
\]
\item
For any $\eps>0$ there exists $\delta>0$ such that whenever $D\subset H$ is
measurable and  $\mu(D)<\delta$, then
\[
\int_{D}|g_i|\,d\mu<\eps \quad\textrm{for }i=1,2,\ldots.
\]
\end{enumerate}
\end{deff}

Let $\la \ge 1$ and $\Om\subset X$ be 
an open set such that 
the doubling property 
holds within $\Om$.
For each $k=1,2,\ldots$\,, consider
a Whitney-type covering of $\Om$ by
balls $\{\Bik\}_{i}$ with radii
$\rik\le1/k$ and a subordinate Lipschitz partition of unity 
$\{\phiik\}_{i}$ so that
\begin{enumerate}
\renewcommand{\theenumi}{\textup{(\roman{enumi})}}%
\item \label{c-1}
the balls $\tfrac 15 B_{ik}$ are pairwise disjoint, and
$80\la\Bik\subset \Om$ for all $i$;
\item \label{c-2}
the balls $\{10\la\Bik\}_{i}$ have bounded  overlap
\begin{equation}  \label{eq-overlap}
  \sum_{i} \chi_{10\la\Bik}(x) \le m,
  \quad x\in \Om;
\end{equation}
\item \label{c-3}
if $10\la\Bik \cap 10\la\Bjk \ne\emptyset$ then
$\rjk\le2\rik$;
\item \label{c-4}
  each $\phiik$ is 
a nonnegative $C/\rik$-Lipschitz function vanishing outside $2\Bik$;
\item \label{c-5}
  $\sum_{i}\phiik=1$ in $\Om$.
\end{enumerate}
Here 
$m$ and $C$
are constants depending only on $\lambda$ and the doubling constant
$C_\mu$ of $\mu$ within~$\Om$.

For each fixed $k$, we can construct the covering as follows:
For each $x \in \Om$,
let 
$t_{x}$ be the smallest nonnegative integer such that
\begin{equation}   \label{eq-choose-rx}
    r_{x} :=\frac{2^{-t_{x}}}{k}\le \frac{\dist(x,X \setm \Om)}{80\la}.
\end{equation}
Since $X$ is separable and $\{B(x,r_{x})\}_{x \in \Om}$ covers $\Om$,
we can use the $5B$-covering lemma
(see e.g.\ Heinonen--Koskela--Shanmugalingam--Tyson~\cite[p.~60]{HKSTbook})
to find an at most countable
cover of $\Om$
by balls $\Bik:=B(x_{ik},r_{ik})$, $r_{ik}=r_{x_{ik}}$,
such that the balls $\tfrac15 \Bik$
are pairwise disjoint.
Property~\ref{c-1} is
now easy to verify.
For~\ref{c-3} we have from~\eqref{eq-choose-rx}, when
$10\la\Bik \cap 10\la\Bjk \ne\emptyset$,
\[
80\la r_{jk} \le \dist(x_{jk},X\setm\Om) 
\le \dist(x_{ik},X\setm\Om) + 10\la (r_{jk} + r_{ik}),
\]
so that $7r_{jk}\le \dist(x_{ik},X\setm\Om)/10\la+ r_{ik}$.
From~\eqref{eq-choose-rx} we get
\[
r_{jk} \le \frac1k
\quad \text{and} \quad
\min\biggl\{\frac{1}{k},\frac{\dist(x_{ik},X \setm \Om)}{80\la}\biggr\} < 2 r_{ik}.
\]
Combining these gives
\begin{equation*}   
7r_{jk} 
\le \min\biggl\{\frac{7}{k},\frac{\dist(x_{ik},X\setm\Om)}{10\la}\biggr\}
+ r_{ik} < 17r_{ik},
\end{equation*}
and, by construction,
 the quotient $r_{jk}/r_{ik}$ can only
take dyadic values.

For a fixed $i$, let $J_i=\{j: 10\la\Bik \cap 10\la\Bjk \ne\emptyset\}$.
If $j \in J_i$ then it follows from \ref{c-3} that 
$B_{jk} \subset 40 \la \Bik$.
The ball 
$40 \la \Bik$ is a globally doubling metric space,
by Bj\"orn--Bj\"orn~\cite[Proposition~3.4]{BBsemilocal},
with a doubling constant only depending
on $C_\mu$.
As the balls
$\{B(x_{jk},\tfrac{1}{10} r_{ik})\}_{j \in J_i}$
are pairwise disjoint, property~\ref{c-2} is satisfied
with  $m$ only depending on $\la$ and $C_\mu$.

Finally, a Lipschitz partition of unity satisfying \ref{c-4} and~\ref{c-5} can
now be constructed as in \cite[pp.~104--105]{HKSTbook}.

The following lemma is a special case of
Hakkarainen--Kinnunen--Lahti--Lehtel\"a~\cite[Lemma~4.2 and Remark~4.3]{HKLL}; 
see also Franchi--Haj\l{}asz--Koskela~\cite[Lemma~6]{FHK}
for an earlier very similar result.
The last statement in the lemma is obtained using the
Dunford--Pettis theorem
(see e.g.\ 
Ambrosio--Fusco--Pallara~\cite[Theorem~1.38]{AFPbook}).

\begin{lem}\label{lem:generalized equiintegrability}
Assume that $\mu$ is doubling within an open set $\Omega$ 
and for each $k=1,2,\ldots$\,, let $\{\Bik\}_{i}$
be the above Whitney-type covering of $\Om$.
For $g\in L^1(\Om)$ define the functions
\begin{equation*}  
	g_k:=\sum_{i=1}^{\infty}\chi_{\Bik}\vint_{10\lambda \Bik}g\,d\mu,
        \quad k=1,2,\ldots.
\end{equation*}
Then the sequence $\{g_k\}_{k=1}^\infty$ is equi-integrable.
Moreover, a subsequence of $g_k$ converges weakly in $L^1(\Om)$ 
to a function $\gt$ satisfying $\gt \le mg$ a.e.\  in $\Om$, where
$m$ is as in \eqref{eq-overlap}.
\end{lem}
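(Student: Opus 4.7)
The plan is to proceed in three stages: first establish a uniform $L^1(\Om)$-bound on $\{g_k\}$, then verify both conditions in the definition of equi-integrability, and finally extract a weakly convergent subsequence via the Dunford--Pettis theorem while identifying its limit. The whole argument rests on two structural ingredients: the trivial inclusion $\mu(B_{ik})\le\mu(10\la B_{ik})$ and the bounded overlap~\ref{c-2} of $\{10\la B_{ik}\}$.

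The uniform bound is obtained by a direct swap of sum and integral: each term $\mu(B_{ik})\vint_{10\la B_{ik}}g\,d\mu$ is at most $\int_{10\la B_{ik}}g\,d\mu$, and after summing, the bounded overlap~\ref{c-2} collapses the result to $m\int_\Om g\,d\mu$, giving $\|g_k\|_{L^1(\Om)}\le m\|g\|_{L^1(\Om)}$. The same computation applied with an arbitrary measurable $D\subset\Om$ in place of $\Om$ yields the flexible estimate
\[
\int_D g_k\,d\mu \le m\int_{D_k^*} g\,d\mu,
\quad \text{where } D_k^*:=\bigcup\{10\la B_{ik}: B_{ik}\cap D\ne\emptyset\},
\]
which will drive the rest of the argument.

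To verify condition~(a) I would pick $A_0\subset\Om$ of finite measure with $\int_{\Om\setm A_0}g\,d\mu$ small and enlarge it to a set $A$ separated from $\Om\setm A_0$ by a fixed positive distance. Since $r_{ik}\le 1/k$, for all sufficiently large $k$ any $B_{ik}$ meeting $\Om\setm A$ has $10\la B_{ik}\subset\Om\setm A_0$, and the displayed estimate then provides the required smallness; the finitely many small $k$ are handled individually by absolute continuity of each $g_k\in L^1(\Om)$, enlarging $A$ further if necessary. For condition~(b), the pairwise disjointness of the balls $\tfrac15 B_{ik}$ together with doubling of $\mu$ bounds $\mu(D_k^*)$ by a constant multiple of the $\mu$-measure of a controlled thickening of $D$, which is small whenever $\mu(D)$ is small, and absolute continuity of $g\,d\mu$ then yields $\int_{D_k^*}g\,d\mu<\eps/m$ uniformly in $k$.

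The Dunford--Pettis theorem now produces a subsequence $g_{k_j}\rightharpoonup\gt$ weakly in $L^1(\Om)$. To show $\gt\le mg$ a.e., I would test against an arbitrary nonnegative $\phi\in C_c(\Om)$: exchanging sum and integral and using $\int_{B_{ik}}\phi\,d\mu\le\mu(B_{ik})\sup_{B_{ik}}\phi$ together with the same inclusion and bounded overlap, one bounds $\int_\Om\phi g_k\,d\mu$ above by $m\int_\Om g\phi\,d\mu$ plus an error controlled by the modulus of continuity of $\phi$ at scale $1/k$, which tends to zero since $g\in L^1(\Om)$. Passing to the limit along the subsequence gives $\int\phi\gt\,d\mu\le m\int\phi g\,d\mu$ for all such $\phi$, whence $\gt\le mg$ a.e. The most delicate step is the covering argument in~(b): one must ensure that $\mu(D_k^*)$ remains uniformly small whenever $\mu(D)$ is small, despite the $k$-dependence of the Whitney decomposition; this is the one place where the pairwise disjointness of $\tfrac15 B_{ik}$ and the doubling property enter in an essential way.
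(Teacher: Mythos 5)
The paper does not actually prove this lemma; it invokes Hakkarainen--Kinnunen--Lahti--Lehtel\"a \cite[Lemma~4.2, Remark~4.3]{HKLL} (see also \cite[Lemma~6]{FHK}) for the equi-integrability, and Dunford--Pettis plus Mazur for the weak limit. So any direct proof is a genuinely different route, and your overall scheme (uniform $L^1$-bound, two-part equi-integrability, Dunford--Pettis, testing against $C_c(\Om)$) is the right skeleton. The uniform bound and the estimate $\int_D g_k\,d\mu\le m\int_{D_k^*}g\,d\mu$ are correct, condition~(a) is essentially right once you take $A_0=\Om\cap B(x_0,R)$ bounded so that $A=\Om\cap B(x_0,R+1)$ has finite measure and is separated from $\Om\setm A_0$, and the $C_c$-testing argument for $\gt\le mg$ goes through.

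The problem is condition~(b). You claim that disjointness of $\tfrac15\Bik$ and doubling bound $\mu(D_k^*)$ by a constant times the measure of a thickening of $D$, ``which is small whenever $\mu(D)$ is small.'' That last clause is false: the measure of a metric $\delta$-neighbourhood of a set is not controlled by the measure of the set (take $D$ of measure zero but densely scattered in a ball; its $\delta$-neighbourhood has measure comparable to the ball, independently of $\mu(D)$). For small $k$ the thickening scale $20\la/k$ is not small either. So the route through $\int_D g_k\,d\mu\le m\int_{D_k^*}g\,d\mu$, having already thrown away the ratio $\mu(\Bik\cap D)/\mu(10\la\Bik)$, cannot be rescued by a measure bound on $D_k^*$.

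The fix is to keep that ratio. Writing
\[
\int_D g_k\,d\mu=\sum_i\frac{\mu(\Bik\cap D)}{\mu(10\la\Bik)}\int_{10\la\Bik}g\,d\mu=\int_\Om h_k g\,d\mu,
\qquad h_k:=\sum_i\frac{\mu(\Bik\cap D)}{\mu(10\la\Bik)}\chi_{10\la\Bik},
\]
bounded overlap gives $\|h_k\|_{L^\infty}\le m$ and $\|h_k\|_{L^1}\le\sum_i\mu(\Bik\cap D)\le m\mu(D)$. Then for any $M>0$,
\[
\int_D g_k\,d\mu\le M\|h_k\|_{L^1}+\|h_k\|_{L^\infty}\int_{\{g>M\}}g\,d\mu\le Mm\mu(D)+m\int_{\{g>M\}}g\,d\mu,
\]
and choosing $M$ first (so the second term is $<\eps/2$) and then $\delta=\eps/(2Mm)$ yields~(b) uniformly in $k$. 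Alternatively, a Calder\'on--Zygmund split of the index set into $\{i:\mu(\Bik\cap D)>\tau\mu(10\la\Bik)\}$ and its complement works; either way the point is to exploit the density ratio rather than the crude inclusion $\mu(\Bik\cap D)\le\mu(10\la\Bik)$ that your $D_k^*$ estimate relies on.
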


\begin{proof}[Proof of Theorem~\ref{thm-intro}]
We want to extend $u\in \Done(\Om)$ and its
minimal $1$-weak upper gradient $g_{u}:=g_{u,\Om}$
to $\Om^{\wedge}$.
Consider the above Whitney-type
covering and Lipschitz partition of unity (extended continuously 
to $\Om^{\wedge}$).

As in the proofs of
Heikkinen--Koskela--Tuominen~\cite[Lemma~5.3]{HKT} and
Bj\"orn--Bj\"orn~\cite[Theorem~4.1]{BBnoncomp}, 
it can be shown that for each $k=1,2,\ldots$ and some $C_0$,
depending only on the doubling and Poincar\'e constants within $\Om$,
the constant
functions $C_0\vint_{10\lambda \Bik}g_u\,d\mu$
are upper gradients of 
\[
u_k:= \sum_j u_{\Bjk}\phijk  \quad \text{in }  
\widehat{B}_{ik}:=\widehat{B}(x_{ik},r_{ik}),
\]
where $x_{ik}$ are the centres of $\Bik$. Hence
\[
g_k:=C_0 \sum_{i}\chi_{\widehat{B}_{ik}}
         \vint_{10\lambda \Bik}g_u\,d\mu
\]
is an upper gradient of $u_k$ in $\Om^{\wedge}$.
Moreover, 
by \eqref{eq-overlap} and the doubling property of $\mu$, we have for every
Lebesgue point $x\in\Om$ of $u$ that
\[
|u_k(x)-u(x)| = \biggl| \sum_{2\Bik\ni x} (u_{\Bik}-u(x)) \phiik(x) \biggr|
\le \sum_{2\Bik\ni x} \vint_{\Bik}|u-u(x)|\,d\mu \to 0
\]
as $k\to0$.
Since  
$\mu$ is doubling within $\Om$ and
$u \in L^1\loc(\Om)$ (see Remark~\ref{rem-loc-spcs} below), $u$ 
has Lebesgue points a.e., by e.g.\ Heinonen~\cite[Theorem~1.8]{heinonen}.
We thus conclude that
$u_k\to u$ a.e.\ in $\Om$.

Lemma~\ref{lem:generalized equiintegrability} shows that the sequence
$\{g_{k}\}_{k=1}^\infty\subset L^1(\Om)$ is equi-integrable and 
there exists a subsequence 
(also denoted $\{g_{k}\}_{k=1}^\infty$) converging 
weakly in $L^1(\Om)$ (and hence also in $L^1(\Om^{\wedge})$)
to a function $g$ such that $g\le C_0mg_u$
a.e.\ in $\Om$.

Mazur's lemma, applied repeatedly to the
subsequences $\{g_{k}\}_{k=j}^\infty$, $j=1,2,\ldots$\,,
provides us with convex combinations of $g_k$ converging to $g$
in $L^1(\Om^{\wedge})$ and such that the corresponding convex combinations
of $u_k$ converge a.e.\ 
to the function $\uhat:=\limsup_{k\to\infty}u_k$ on $\Om^{\wedge}$,
which has $g$ as a $1$-weak upper gradient (with respect to
$\Om^{\wedge}$),
see \cite[the proof of Proposition~2.3]{BBbook}.
In particular $\uhat \in D^1(\Om^{\wedge})$.
Since $\uhat=u$ a.e.\ in $\Om$ and $u,\uhat\in D^1(\Om)$, also
$\uhat=u$ $\ConeOm$-q.e.\ in~$\Om$,
and thus $\ConeX$-q.e.\ in~$\Om$ by Lemma~2.24 in \cite{BBbook}.

If $\Om$ is $1$-path open in $\Xhat$ then also the capacities 
$\ConeOm$ and $\ConeXhat$ have the same zero sets in $\Om$,
by Lemma~\ref{lem-same-zero-cap}.
This shows that we may choose $\uhat=u$ in $\Om$.
Lemma~\ref{lem-gu-on-p-path-open-Xhat} 
then shows that
$g_u=g_{\uhat}$ a.e.\
within $\Om$.

Finally, if $\ut$ is defined to be the right-hand side of
	\eqref{eq-Leb-pt-1}, then $\uhat=\ut$ 
	at all Lebesgue points of $\uhat$, i.e.\ $\CpXhat$-q.e.\ in $\Omhat$,
	by the proof of Proposition~\ref{prop-Leb-pt} below  with $\Ghat=\Omhat$.
	Hence,
	$\uhat$ may also be chosen so that it satisfies \eqref{eq-Leb-pt-1}.
\end{proof}

\begin{remark}  \label{rem-various-ass}
(a)
	The simple example $X=\Om=\R \setm \{0\}$ with 
$u(x)=\chi_{(0,\infty)}(x)(1-|x|)_\limplus$ 
	demonstrates that 
	under local assumptions on the measure $\mu$, functions
	in $\Np(\Om)$
	may fail to have extensions even to $\Dploc(\Omhat)$
when $p \ge 1$.
	A partial remedy for this situation is provided by
	Proposition~\ref{prop-ext-Nploc} below.

(b)
	Under semilocal assumptions for $\mu$
	(see Definition~\ref{def-local-intro}),
	the conclusion of Theorem~\ref{thm-intro} clearly
	holds 
	for all bounded $\Om$.
	If $\Om$ is unbounded, 
the semilocal assumptions do not imply 
the doubling property
	and the $1$-Poincar\'e inequality 
	within $\Om$, and so Theorem~\ref{thm-intro} is not directly available.
However, if $\Om$ is $1$-path open in $\Xhat$, then
so 
is $\Om \cap B(x,k)$ for every $k$ and some fixed $x\in\Om$.
Since the doubling property
and the $1$-Poincar\'e inequality hold within each $\Om \cap B(x,k)$, 
applying Theorem~\ref{thm-intro} for each $k$ and letting $k \to \infty$ 
shows that the conclusion holds with $A_0=1$ also for~$\Om$.

(c) Theorem~\ref{thm-intro} is formulated
under assumptions holding within $\Om$. 
The corresponding result \cite[Theorem~4.1]{BBnoncomp} for $p>1$ 
can also be formulated similarly,
with the proof given in \cite{BBnoncomp} still applying.
\end{remark}

\begin{remark}   \label{rem-loc-spcs}
The extension result in
Theorem~\ref{thm-intro}
makes it possible to
obtain quasicontinuity and Lebesgue points for local
Newtonian functions on
noncomplete spaces under local assumptions.
If $X$ supports a local $1$-Poincar\'e inequality then 
$\Noneloc(\Om)=\Doneloc(\Om)$ for every open $\Om \subset X$;
this follows as in \cite[Proposition~4.14]{BBbook}.
Moreover, since local assumptions are
  inherited by open subsets, {the} results
in the rest of this section directly apply also to open $\Om \subset X$.
We therefore formulate them using $\Noneloc(X)$ rather than
$\Noneloc(\Om)=\Doneloc(\Om)$.
\end{remark}

\begin{prop} \label{prop-ext-Nploc}
Assume that 
$\mu$ is locally doubling and supports a local $1$-Poincar\'e 
inequality on $X$.
Then for every $u \in \Noneloc(X)$ there is an open set
$\Ghat \supset X$ in $\Xhat$ 
and a function $\uhat \in \Noneloc(\Ghat)$ such that $u=\uhat$ 
$\ConeX$-q.e.\ on $X$.
Moreover, $\Ghat$ is locally compact and $\mu|_{\Ghat}$ is locally doubling
and supports a local $1$-Poincar\'e inequality. 

If $X$ is $1$-path open in $\Xhat$, then one can choose
$\uhat\equiv u$ 
	and $g_{\uhat} \equiv g_u$  in~$X$.
\end{prop}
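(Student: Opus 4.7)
The plan is to cover $X$ by a countable family of balls on which the doubling property and the $1$-Poincar\'e inequality hold within, apply Theorem~\ref{thm-intro} to each ball, and glue the resulting extensions together by means of the canonical formula~\eqref{eq-Leb-pt-1}. Using the local assumptions together with the Lindel\"of property of $X$, one extracts a countable cover $\{B_j\}_{j=1}^\infty$ of $X$ by balls for which Theorem~\ref{thm-intro} applies to each $B_j$, and defines $\Ghat=\bigcup_{j=1}^\infty B_j^\wedge$. This $\Ghat$ is open in $\Xhat$ and contains $X$.

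Theorem~\ref{thm-intro}, applied with $\Om=B_j$, produces for each $j$ an extension $\uhat_j\in\Done(B_j^\wedge)$ with $\uhat_j=u$ $\ConeX$-q.e.\ in $B_j$, and one may take $\uhat_j$ to be the canonical choice~\eqref{eq-Leb-pt-1}. I would define the global extension by
\[
  \uhat(z)=\limsup_{r\to0}\vint_{\Bhat(z,r)\cap X}u\,d\mu,\qquad z\in\Ghat.
\]
The key point is that for $z\in B_j^\wedge$ and $r$ small enough, $\Bhat(z,r)\subset B_j^\wedge$, so $\Bhat(z,r)\cap X\subset B_j$ and therefore $\Bhat(z,r)\cap X=\Bhat(z,r)\cap B_j$; consequently $\uhat$ coincides pointwise on each $B_j^\wedge$ with the canonical extension $\uhat_j$. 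Hence $\uhat\in\Done(B_j^\wedge)$ for every $j$, the sheaf property of $\Done$ mentioned in Section~\ref{sect-ug} yields $\uhat\in\Doneloc(\Ghat)$, and countable subadditivity of $\ConeX$ gives $\uhat=u$ $\ConeX$-q.e.\ on $X$.

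It remains to transfer the geometric properties. Local doubling of $\muhat|_{\Ghat}$ I would establish by a sandwich argument: given $z\in\Ghat$, pick $j$ and $\rho>0$ with $\Bhat(z,3\rho)\subset B_j^\wedge$, and for $\Bhat(y,s)\subset\Bhat(z,\rho)$ approximate $y$ by a Cauchy sequence $(y_n)\subset X$ so that $B(y_n,s/2)\subset\Bhat(y,s)\subset B(y_n,3s)$ for large $n$; combining the doubling property within $B_j$ with the defining identity $\muhat(A)=\mu(A\cap X)$ from~\eqref{eq-muhat-Borel} gives the required doubling inequality on $\Ghat$. The local $1$-Poincar\'e inequality is then transferred in the same spirit, where Lemma~\ref{lem-PI-char} reduces testing to bounded Newtonian functions on whose behalf the sandwich argument, together with the Poincar\'e inequality within $B_j$, can be applied. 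Local compactness of $\Ghat$ follows from completeness of $\Xhat$ together with local doubling on $\Ghat$: sufficiently small closed balls in $\Ghat$ are totally bounded by iterated doubling and closed in the complete space $\Xhat$, hence compact. With the local $1$-Poincar\'e inequality on $\Ghat$ now available, the identification $\Doneloc(\Ghat)=\Noneloc(\Ghat)$ from Remark~\ref{rem-loc-spcs} applies and yields $\uhat\in\Noneloc(\Ghat)$.

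For the final claim, assume that $X$ is $1$-path open in $\Xhat$. Each $B_j=X\cap B_j^\wedge$ is the intersection of a $1$-path open set with an open set in $\Xhat$, and hence is itself $1$-path open in $\Xhat$, as one verifies by intersecting the exceptional curve family for $X$ with the continuous preimage of $B_j^\wedge$. The final sentence of Theorem~\ref{thm-intro} then permits choosing $\uhat_j\equiv u$ and $g_{\uhat_j}\equiv g_u$ on $B_j$ for every $j$; patching via the same canonical formula gives $\uhat\equiv u$ and $g_{\uhat}\equiv g_u$ on all of $X$. The main obstacle I anticipate is the rigorous transfer of the local $1$-Poincar\'e inequality to $\Ghat$, since the approximation of $\Xhat$-balls by $X$-balls must interact carefully with upper gradients of test functions and one must ensure the constants remain controlled as the approximants vary.
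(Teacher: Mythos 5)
Your proof follows essentially the same strategy as the paper's: exploit the Lindel\"of property to cover $X$ by countably many balls $B_j$ on which $u\in\None(B_j)$ and the local assumptions hold, extend on each piece via Theorem~\ref{thm-intro}, and glue. The paper glues by observing $\uhat_i=\uhat_j$ a.e.\ (hence $\ConeXhat$-q.e.) on overlaps, while you glue via the explicit $\limsup$ formula and your observation that $\Bhat(z,r)\cap X=\Bhat(z,r)\cap B_j$ for small $r$; both are fine. Where the paper cites \cite[Propositions~3.3, 3.6, 3.9]{BBnoncomp} for the local doubling, Poincar\'e inequality and local compactness of $\Ghat$, you sketch a sandwich argument which is the right idea (and is essentially what those cited propositions do), though as you acknowledge the details require care. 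Also, you are right to verify explicitly that $B_j=X\cap B_j^\wedge$ is $1$-path open in $\Xhat$ before invoking the last part of Theorem~\ref{thm-intro}; the paper leaves this implicit. One small inconsistency: in your final paragraph you write that ``patching via the same canonical formula gives $\uhat\equiv u$,'' but the $\limsup$ formula only gives agreement with $u$ at Lebesgue points, i.e.\ $\ConeX$-q.e., not pointwise everywhere on $X$. To get $\uhat\equiv u$ pointwise on $X$ you should patch the $\uhat_j\equiv u$ directly (taking any of the $\uhat_j$'s values off $X$), not the formula; this is what the paper does.
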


Note that the set $\Ghat$ in general depends on $u$, cf.\ 
Bj\"orn--Bj\"orn~\cite[Example~4.7]{BBnoncomp}.

\begin{proof}
Since $X$ is Lindel\"of, we can find a countable cover of $X$
by balls $B_j=B(x_j,r_j)\subset X$
such that $u \in \None(B_j)$ and both the $1$-Poincar\'e inequality and the 
doubling property for $\mu$ hold within each $B_j$, $j=1,2,\ldots$\,.
Let $\Bhat_j=\Bhat(x_j,r_j)$ and $\Ghat=\bigcup_{j=1}^\infty \Bhat_j$.

Using
Theorem~\ref{thm-intro},
we can extend $u|_{B_j}$ to 
$\uhat_j\in\None(\Bhat_j)$ so that $\uhat_j=u$ $\ConeX$-q.e.\ in
$B_j$, $j=1,2,\ldots$\,.
Then $\uhat_i=\uhat_j$ a.e.\ (and hence 
$\ConeXhat$-q.e.) in $\Bhat_i\cap\Bhat_j$ for all $i,j$.
We can thus construct $\uhat \in \Nploc(\Ghat)$ 
so that $\uhat=u$ $\ConeX$-q.e.\ in $X$ and
$g_\uhat \le A_j g_u$ a.e.\ in $B_j$, 
where $A_j$ is the constant provided by
Theorem~\ref{thm-intro}
in $B_j$.
Hence $\uhat\in \Noneloc(\Ghat)$.
If $X$ is $1$-path open in $\Xhat$, then it follows from the last part of 
Theorem~\ref{thm-intro}
that we can choose 
$\uhat\equiv u$ 
	and $g_{\uhat} \equiv g_u$  in~$X$.

The local doubling property and the local $1$-Poincar\'e inequality
for $\mu|_{\Ghat}$ follow from
Bj\"orn--Bj\"orn~\cite[Propositions~3.3 and~3.6]{BBnoncomp}.
Consequently, each $\Bhat_j$ (and thus also $\Ghat$)
is locally compact, by  \cite[Proposition~3.9]{BBnoncomp}.
\end{proof}

The following two results are now relatively easy consequences
of the above extension to $\Ghat \subset \Xhat$
and the corresponding results in complete spaces.
Recall the definition of quasicontinuity from Definition~\ref{deff-q-cont}.

\begin{cor}  \label{cor-qcont}
Assume that  
$\mu$ is locally doubling and supports a local $1$-Poincar\'e 
inequality on $X$, and that
$X$ is $1$-path open in $\Xhat$.
Then every $u \in \Noneloc(X)$ is $\ConeX$-quasicontinuous.
\end{cor}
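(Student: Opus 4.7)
The plan is to lift the classical quasicontinuity theorem for $\None$-functions on complete doubling Poincar\'e spaces to the present setting by extending $u$ to the completion $\Xhat$ and then patching local pieces.

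Since $X$ is $1$-path open in $\Xhat$, Proposition~\ref{prop-ext-Nploc} yields an open set $\Ghat \subset \Xhat$ containing $X$ and an extension $\uhat \in \Noneloc(\Ghat)$ with $\uhat \equiv u$ on $X$; moreover, $\Ghat$ is locally compact and $\mu|_{\Ghat}$ is locally doubling and supports a local $1$-Poincar\'e inequality. Applying Theorem~\ref{thm-rajala-iter} twice inside $\Ghat$, for each $x \in \Ghat$ one obtains a nested pair of bounded uniform domains $G_x \Subset H_x \Subset \Ghat$ with $\mu|_{\itoverline{H_x}}$ globally doubling and supporting a global $1$-Poincar\'e inequality on the compact space $\itoverline{H_x}$. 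Using the Lindel\"of property I would extract a countable cover $\{G_j\}_{j=1}^\infty$ of $\Ghat$ with corresponding larger domains $H_j$.

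Fix $\eps>0$. For each $j$, the classical quasicontinuity theorem (e.g.\ \cite[Theorem~5.29]{BBbook}) applied to $\uhat|_{\itoverline{H_j}} \in \None(\itoverline{H_j})$ supplies an open set $V_j \subset \itoverline{H_j}$ with $\capp_1^{\itoverline{H_j}}(V_j) < \eps 2^{-j-1}$ such that $\uhat$ is real-valued and continuous on $\itoverline{H_j} \setm V_j$. After replacing $V_j$ by $V_j \cap G_j$, which is open in $\Ghat$, a routine patching argument shows that $\uhat$ is real-valued and continuous on $\Ghat \setm \bigcup_j V_j$: each point there lies in some $G_k$, and a small enough neighborhood in $G_k$ meets the complement only within $\itoverline{H_k} \setm V_k$, where continuity is already established. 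Consequently $U^* := X \cap \bigcup_j V_j$ is open in $X$ and $u = \uhat$ is real-valued and continuous on $X \setm U^*$.

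The principal obstacle is the quantitative estimate $\ConeX(U^*) < C\eps$ with $C$ independent of $\eps$. Restricting Newtonian functions from $\Ghat$ to $X$ (using that upper gradients restrict along curves in $X$ and that $\muhatin(\Xhat\setm X)=0$) gives $\ConeX(U^*) \le C_1^{\Ghat}(U^*) \le \sum_j C_1^{\Ghat}(V_j \cap G_j)$ by subadditivity. For each $j$, because $\itoverline{G_j}$ is a compact subset of $H_j$ one can fix a Lipschitz cutoff $\phi_j$ equal to $1$ on $\itoverline{G_j}$ and supported in $H_j$; multiplying any admissible $v \in \None(\itoverline{H_j})$ for $\capp_1^{\itoverline{H_j}}(V_j)$ by $\phi_j$ and extending by zero produces an admissible function for $C_1^{\Ghat}(V_j \cap G_j)$ whose Newtonian norm is controlled by a multiple of $\|v\|_{\None(\itoverline{H_j})}$. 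This yields $C_1^{\Ghat}(V_j \cap G_j) \le C\capp_1^{\itoverline{H_j}}(V_j)$; summing and reparameterizing $\eps$ completes the proof, with Lemma~\ref{lem-same-zero-cap} providing the underlying compatibility of null sets across the different ambient spaces.
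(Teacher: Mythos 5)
Your proposal reaches the right conclusion but takes a noticeably longer route than the paper. The paper's proof is short: after invoking Proposition~\ref{prop-ext-Nploc} to get $\Ghat$ and $\uhat\in\Noneloc(\Ghat)$ (with $\uhat\equiv u$ on $X$), it simply applies Theorem~9.1 of Bj\"orn--Bj\"orn~\cite{BBsemilocal}, which already establishes $C_1^{\Ghat}$-quasicontinuity for $\Noneloc$-functions on a locally compact space under local doubling and a local $1$-Poincar\'e inequality, and then observes $\ConeX\le C_1^{\Ghat}$. You instead re-derive the content of that cited theorem from scratch: extracting nested uniform domains $G_j\Subset H_j$ via Theorem~\ref{thm-rajala-iter}, applying the compact-space quasicontinuity result \cite[Theorem~5.29]{BBbook} on each $\itoverline{H_j}$, patching the continuity sets, and then comparing capacities through Lipschitz cutoffs. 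The approach is sound and in the same spirit as the paper's treatment of Theorem~\ref{thm-BBM-gen} and Proposition~\ref{prop-Leb-pt}, so it is a valid, if redundant, alternative; what the paper's approach buys is brevity and reuse of an existing, carefully stated result.

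Two technical remarks. First, the comparison constant in $C_1^{\Ghat}(V_j\cap G_j)\le C\,\capp_1^{\itoverline{H_j}}(V_j)$ is not uniform in $j$: it depends on the Lipschitz constant of the cutoff $\phi_j$, hence on $\dist(\itoverline{G_j},\bdy H_j)$, which degenerates as $j$ varies. Writing it as a single $C$ and ``reparameterizing $\eps$'' at the end does not quite work; you should instead determine $C_j$ first and then choose $V_j$ with $\capp_1^{\itoverline{H_j}}(V_j)<\eps/(C_j 2^{j+1})$, exactly as is done in analogous places in the paper. Second, the closing appeal to Lemma~\ref{lem-same-zero-cap} is spurious: your argument establishes a quantitative capacity estimate via $\ConeX\le C_1^{\Ghat}$ and the cutoff comparison, and at no point requires the equality of null sets that Lemma~\ref{lem-same-zero-cap} provides. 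Note also that the hypothesis that $X$ be $1$-path open in $\Xhat$ is used precisely so that Proposition~\ref{prop-ext-Nploc} gives $\uhat\equiv u$ everywhere on $X$ (not merely q.e.), which is what makes the restriction step clean; your proof does use this, so that part is fine.
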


\begin{proof}
  Find a locally compact open set $\Ghat\subset\Xhat$ and a function
  $\uhat\in \Noneloc(\Ghat)$ as in Proposition~\ref{prop-ext-Nploc}
with $\uhat\equiv u$ in $X$
and so that $\mu|_{\Ghat}$ is locally doubling
and supports a local $1$-Poincar\'e inequality.
It then follows from Theorem~9.1 in Bj\"orn--Bj\"orn~\cite{BBsemilocal}
that $\uhat$ is
$C_1^{\Ghat}$-quasicontinuous
on $\Ghat$, which immediately yields
that $u$ is $\ConeX$-quasicontinuous on $X$, since $\ConeX$ is dominated by
$C_1^{\Ghat}$.
\end{proof}

\begin{prop} \label{prop-Leb-pt}
Assume that $\mu$ is locally doubling and supports a local $1$-Poincar\'e 
inequality on $X$.
Then every $u \in \Noneloc(X)$ has Lebesgue points
$\ConeX$-q.e.,
and moreover the extension $\uhat$ in Proposition~\ref{prop-ext-Nploc}
can be given by
\begin{equation} \label{eq-Leb-pt-2}
    \uhat(x)=\limsup_{r \to 0} \vint_{\Bhat(x,r) \cap X} u \, d\mu,
   \quad x \in \Ghat.
\end{equation}
\end{prop}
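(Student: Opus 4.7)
The plan is to lift the Lebesgue-point theorem for $\None$-functions in complete, globally doubling $1$-Poincar\'e spaces to our noncomplete, locally doubling setting. First, I would invoke Proposition~\ref{prop-ext-Nploc} to obtain an extension $\uhat\in\Noneloc(\Ghat)$ on an open set $\Ghat\supset X$ in $\Xhat$ such that $\Ghat$ is locally compact and $\muhat|_{\Ghat}$ is locally doubling and supports a local $1$-Poincar\'e inequality. Observing that $\uhat=u$ $\ConeX$-q.e.\ in $X$ and that balls in $X$ centred at $x\in X$ coincide with $\Bhat(x,r)\cap X$, it suffices to prove Lebesgue points for $\uhat$ $C_1^{\Ghat}$-q.e.\ in $\Ghat$, since the restriction of $C_1^{\Ghat}$ to $X$ dominates $\ConeX$.

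Next, I would mimic the localization technique from the proof of Theorem~\ref{thm-BBM-gen}: by Theorem~\ref{thm-rajala-iter} applied inside $\Ghat$ (equipped with $\muhat$), for every $x\in\Ghat$ there is a bounded uniform domain $G_x\ni x$ with $\clGx\Subset\Ghat$ such that $\clGx$ is compact and $\muhat|_{\clGx}$ is globally doubling and supports a global $1$-Poincar\'e inequality. Since $\Ghat$ is Lindel\"of, countably many $\clGj$ cover $\Ghat$. Because $\clGj\Subset\Ghat$, the restriction $\uhat|_{\clGj}$ belongs to $\None(\clGj)$, and the KKST Lebesgue-point theorem~\cite{KKST} applies in the complete, globally doubling, $1$-Poincar\'e space $\clGj$, providing a set $Z_j\subset\clGj$ with $C_1^{\clGj}(Z_j)=0$ off which $\uhat$ has Lebesgue points with respect to balls taken in $\clGj$. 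For $x\in G_j$, balls of sufficiently small radius in $\clGj$ and in $\Ghat$ coincide, so such $x$ are Lebesgue points of $\uhat$ in $\Ghat$ as well. Combining over $j$, using subadditivity together with the agreement of the zero sets of $C_1^{\clGj}$ and $C_1^{\Ghat}$ in $G_j$ (cf.\ \cite[Lemma~2.24]{BBbook}), the set of non-Lebesgue points of $\uhat$ in $\Ghat$ has zero $C_1^{\Ghat}$-capacity. Restricting to $X$ gives Lebesgue points of $u$ at $\ConeX$-q.e.\ $x\in X$.

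For the formula~\eqref{eq-Leb-pt-2}, I would combine the identity $\muhat(\Bhat(x,r))=\mu(\Bhat(x,r)\cap X)$ with $\uhat=u$ $\muhat$-a.e.\ on $X$ and the vanishing $\muhat(\Xhat\setm X)=0$ on Borel sets (see~\eqref{eq-muhat-Borel}), to rewrite
\[
\vint_{\Bhat(x,r)}\uhat\,d\muhat = \vint_{\Bhat(x,r)\cap X} u\,d\mu.
\]
At every Lebesgue point of $\uhat$ the left-hand side tends to $\uhat(x)$ as $r\to0$, so the $\limsup$ on the right-hand side equals $\uhat(x)$ outside a set of zero $C_1^{\Ghat}$-capacity in $\Ghat$. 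Redefining $\uhat$ by the $\limsup$ on this exceptional set keeps $\uhat$ in $\Noneloc(\Ghat)$ and preserves $\ConeX$-q.e.\ agreement with $u$ on $X$, since $\None$-membership is invariant under modification on capacity-zero sets. The main subtlety to verify will be that small $\Ghat$-balls centred in $G_j$ lie inside $\clGj$, so that averages over these balls computed in $(\clGj,\muhat)$ and in $(\Ghat,\muhat)$ really coincide; this is guaranteed by $G_j\Subset\Ghat$, built into the Rajala-type construction.
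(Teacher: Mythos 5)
Your approach matches the paper's proof in structure: extend to $\Ghat$ via Proposition~\ref{prop-ext-Nploc}, cover $\Ghat$ by bounded uniform domains $G_j\Subset\Ghat$ from Theorem~\ref{thm-rajala-iter} whose closures are compact and carry globally doubling measures with global $1$-Poincar\'e inequalities, apply the Kinnunen--Korte--Shanmugalingam--Tuominen Lebesgue-point theorem on each $\clGj$, and then transfer capacity-null sets back to $\Ghat$ (and hence to $X$) using Lindel\"of and the agreement of zero sets for the relevant capacities. The treatment of the formula~\eqref{eq-Leb-pt-2} via the measure-theoretic identity between $\mu$ on $X$ and $\muhat$ on $\Ghat$ is also the same as the paper's.

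However, there is a genuine gap at the central step: you invoke the KKST Lebesgue-point theorem on $\clGj$, but that theorem is stated under the hypothesis $\mu(X)=\infty$, whereas $\clGj$ is compact and hence $\mu(\clGj)<\infty$. So the KKST result does not apply to $\clGj$ directly. The paper is explicit about this point and inserts Remark~\ref{rem-finite mu}, which reduces the finite-measure case to the infinite-measure case by passing to the tensor product $\Xt=X\times\R$ (using the tensor-product Poincar\'e inequality from \cite{BBtensor}, a cut-off $\ut(x,t)=u(x)\eta(t)$, and then slicing back in the $t$-variable to convert $\ConeXt$-nullity of the bad set $E\times(-1,1)$ into $\ConeX$-nullity of $E$). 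Without this step your localization collapses, since every local patch obtained from Rajala's theorem has finite measure. The rest of your argument (the capacity comparisons, the coincidence of small balls in $\clGj$ and $\Ghat$, the $\limsup$ representation) is fine.
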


The proof below shows that the limit 
\[
\lim_{r \to 0} \vint_{B(x,r)} u \, d\mu
\]
actually exists for $\ConeXhat$-q.e.\ $x\in X$, even though it only equals $u(x)$
for $\ConeX$-q.e.\ $x$.
In general, $\ConeX \le \ConeXhat$, but it follows
  from Lemma~\ref{lem-same-zero-cap}
that they have the same zero sets 
  if $X$ is $1$-path open in $\Xhat$.

\begin{remark}   \label{rem-finite mu}
Even when $X$ is complete, the Lebesgue point result in
Proposition~\ref{prop-Leb-pt} generalizes earlier results 
obtained under global assumptions, as in  
Kinnunen--Korte--Shanmugalingam--Tuominen~\cite[Theorem~4.1 and Remark~4.7]{KKST}.
Therein, $\mu(X)=\infty$ is assumed, but we shall now explain how 
the Lebesgue point result from \cite[Theorem~4.1 and Remark~4.7]{KKST}
can be obtained also for
a complete metric space $X$ equipped with a globally
  doubling measure $\mu$ supporting a global $1$-Poincar\'e inequality
  and satisfying $\mu(X)<\infty$.
  (Under these assumptions,
  $\mu(X)<\infty$ if and only if $X$ is bounded.)
We will use this fact when proving Proposition~\ref{prop-Leb-pt}.

For this, let $\Xt= X\times \R$, equipped with
the product metric
\[
  d_{\Xt}((x,t),(y,s))=\max\{d(x,y),|t-s|\}
\]  and
the product measure
\[
d\mut(x,t) = d\mu(x)\,dt.
\]
Note that $\mut(\Xt)=\infty$.
By Bj\"orn--Bj\"orn~\cite[Theorem~3 and Remark~4]{BBtensor},
  $\mut$ is globally doubling and supports a global $1$-Poincar\'e inequality.
Let $\eta$ be a Lipschitz cut-off function on $\R$ such that $\eta=1$ in $[-1,1]$
and $\eta=0$ outside $[-2,2]$.
If $u \in  \None(X)$ then 
\[
\ut(x,t):=u(x) \eta(t) \in \None(\Xt)
\]
 and 
\cite[Theorem~4.1 and Remark~4.7]{KKST} implies that $\ut$ has Lebesgue 
points at $\ConeXt$-q.e.\ $x\in\Xt$.
Clearly, for $0<r<1$,
\[
\vint_{B(x,r) \times (-r,r)} \ut \, d\mut = 2r \vint_{B(x,r)} u\, d\mu
\quad \text{and}  \quad
\mut(B(x,r) \times (-r,r)) = 2r \mu(B(x,r)),
\]
which implies that $x\in X$ is a Lebesgue point  of $u$ if and only if
$(x,t)\in\Xt$ is a Lebesgue point of $\ut$ for some (and equivalently 
all) $t\in (-1,1)$.
Hence, if $E\subset X$ is the set of non-Lebesgue points of $u$,
then $\ConeXt(E\times (-1,1))=0$ and for every $\eps>0$
there exists $\vt\in \None(\Xt)$, with an upper gradient $g$,
such that $\vt\ge1$ on $E\times (-1,1)$ and 
\[
\int_{\Xt} (|\vt| + g) \,d\mut < 2\eps.
\]
Then there exists $t\in (-1,1)$ such that 
\begin{equation}   \label{eq-intX<eps}
\int_{X} (|v(x,t)| + g(x,t)) \,d\mu(x) < \eps.
\end{equation}
Clearly, $g(\cdot,t)$ is an upper gradient of $v(\cdot,t)$ with respect to $X$
and  $v(\cdot,t)$ is admissible for $\ConeX(E)$.
It therefore follows from~\eqref{eq-intX<eps} that
$\ConeX(E)<\eps$.
Letting $\eps\to0$ now shows that $\ConeX(E)=0$ and so $u$
has Lebesgue points $\ConeX$-q.e.\ in $X$.
\end{remark}

\begin{proof}[Proof of Proposition~\ref{prop-Leb-pt}]
  Find $\Ghat$ and $\uhat\in \Noneloc(\Ghat)$ as in
  Proposition~\ref{prop-ext-Nploc}.
Let $x \in \Ghat$.
As $\Ghat$ is locally compact, it follows from Theorem~\ref{thm-rajala-iter}
that there is a bounded uniform domain $G_x$ in $\Xhat$ 
such that $x \in G_x \Subset \Ghat$ and such that  $\mu|_{\clG_x}$
is globally doubling and supports a global \p-Poincar\'e inequality
on $\clG_x$, where the closure is taken with respect to $\Xhat$. 
In particular, $\uhat \in \None(\clG_x)$.

By~\cite[Theorem~4.1 and Remark~4.7]{KKST}
and the argument in Remark~\ref{rem-finite mu},
$\uhat$ has Lebesgue points $\ConeGx$-q.e.\ in $G_x$.
By Lemma~\ref{lem-same-zero-cap},
the capacities $\ConeGx$ and $\ConeXhat$ have the same zero sets in $G_x$.
Hence as $\Ghat$ is Lindel\"of, $\uhat$ has Lebesgue points 
$\ConeXhat$-q.e.\ in $\Ghat$, and so $u$ has Lebesgue points $\ConeX$-q.e. in $X$.

Finally, if $\ut$ is given by the right-hand side of
\eqref{eq-Leb-pt-2}, then $\uhat=\ut$ 
at all Lebesgue points of $\uhat$, i.e.\ $\CpXhat$-q.e.\ in $\Ghat$. 
Hence,
$\uhat$ may also be chosen so that it satisfies \eqref{eq-Leb-pt-2}.
\end{proof}

Even for $u \in \None(X)$, 
(the proof of)
Proposition~\ref{prop-ext-Nploc} only guarantees an extension in the local 
Newtonian space $\Noneloc(\Ghat)$ (but with $\Ghat$ independent of $u$), 
unless $X$ is 1-path open in $\Xhat$.
However, under slightly stronger uniform assumptions we can obtain
the following partial nonlocal 
conclusion, which also includes $p>1$, see \cite[Remark~4.10]{BBnoncomp}.

\begin{prop} \label{prop-ext-Np}
Assume that there are constants $\Cmu$, $\CPI$ and $\la$ such
that for each $x \in X$, there is $r_x>0$
such that $\mu$ is doubling within $B_x=B(x,r_x)$ with constant $\Cmu$
and $\mu$ supports a \p-Poincar\'e inequality within $B_x$
with constants $\CPI$ and $\la$.

Then  there is an open set $\Ghat \supset X$ in $\Xhat$
such that for every $u \in \Np(X)$, the function $\uhat$
given by~\eqref{eq-Leb-pt-2} satisfies
$\uhat=u$ $\CpX$-q.e.\ on $X$ and belongs to $\Np(\Ghat)$.

If also $r_x$ is independent of $x$ then we may choose
  $\Ghat=\Xhat$.
\end{prop}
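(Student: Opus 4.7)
The strategy is to construct $\Ghat$ once and for all from the uniform hypotheses, and then for each $u\in\Np(X)$ produce $\uhat$ by gluing together local extensions from Theorem~\ref{thm-intro} (when $p=1$) or \cite[Theorem~4.1]{BBnoncomp} (when $p>1$) applied on countably many balls covering $X$.

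I would first choose a countable cover $\{B_j\}_{j=1}^\infty$ of $X$ by balls $B_j=B(x_j,r_j)$ with $10\la B_j\subset B(x_j,r_{x_j})$ and with bounded overlap $\sum_j\chi_{B_j}\le M$, where $M$ depends only on $\Cmu$ and $\la$. When $r_x\equiv r_0$, a maximal $r_0/(20\la)$-separated net $\{x_j\}$ combined with $r_j=r_0/(10\la)$ does the job; in the general case, a $5B$-covering lemma argument applied to $\{B(x,r_x/(50\la))\}_{x\in X}$ produces such a family. The main subtlety here is ensuring bounded overlap despite $\mu$ not being globally doubling on $X$: the uniform assumptions are essential, since the doubling constant within each $B(x_j,r_{x_j})$ is the \emph{same} $\Cmu$ for every $j$, so a standard disjointness-plus-doubling count applies uniformly in $j$. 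Setting $\Ghat=\bigcup_j\Bhat_j$, where $\Bhat_j=\Bhat(x_j,r_j)$, then yields an open subset of $\Xhat$ containing $X$, and density of $X$ in $\Xhat$ ensures that $\{\Bhat_j\}$ inherits the same overlap bound $M$.

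Now fix $u\in\Np(X)$. Since doubling and the \p-Poincar\'e inequality hold within $10\la B_j\subset B(x_j,r_{x_j})$ with the uniform constants $\Cmu$, $\CPI$ and $\la$, Theorem~\ref{thm-intro} (or \cite[Theorem~4.1]{BBnoncomp}) applied to each $B_j$ produces an extension $\uhat_j\in\Np(\Bhat_j)$ with $\uhat_j=u$ $\CpX$-q.e.\ in $B_j$ and $g_{\uhat_j}\le A_0 g_u$ a.e.\ in $B_j$, for a constant $A_0$ depending only on $\Cmu$, $\CPI$ and $\la$. Both results moreover permit the explicit choice
\[
\uhat_j(x)=\limsup_{r\to 0}\vint_{\Bhat(x,r)\cap X}u\,d\mu,
\]
which does not depend on $j$, so the single function $\uhat$ on $\Ghat$ defined by \eqref{eq-Leb-pt-2} satisfies $\uhat|_{\Bhat_j}=\uhat_j$ $\CpXhat$-q.e.\ on each $\Bhat_j$; in particular $\uhat=u$ $\CpX$-q.e.\ on $X$.

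By the sheaf property of upper gradients for a countable open cover, $g:=\sum_j g_{\uhat_j}\chi_{\Bhat_j}$ is a \p-weak upper gradient of $\uhat$ on $\Ghat$. Using the bounded overlap of $\{\Bhat_j\}$ (and H\"older's inequality when $p>1$), the identity $\muhat(\Bhat_j\cap X)=\mu(B_j)$ together with~\eqref{eq-int-X-Xhat-equal2}, and the bound $g_{\uhat_j}\le A_0g_u$, summation yields
\[
\int_\Ghat\bigl(|\uhat|^p+g^p\bigr)\,d\muhat\le C\int_X\bigl(|u|^p+g_u^p\bigr)\,d\mu
\]
for some $C$ depending only on $M$, $A_0$ and $p$. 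Hence $\uhat\in\Np(\Ghat)$. Finally, in the uniform case $r_x\equiv r_0$, density of $X$ in $\Xhat$ together with the maximality of the $r_0/(20\la)$-net yields, for every $\hat y\in\Xhat$, some $x\in X$ and some index $j$ with $d(\hat y,x_j)\le d(\hat y,x)+d(x,x_j)<r_0/(10\la)=r_j$, so that $\hat y\in\Bhat_j$ and therefore $\Ghat=\Xhat$.
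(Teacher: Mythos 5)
Your proof is correct, but it takes a more laborious route than the paper's. The paper simply sets $\Ghat=\bigcup_{x\in X}\Bhat(x,r_x)$ (an arbitrary, not necessarily countable, union of balls), then cites the local extension machinery (Proposition~\ref{prop-Leb-pt} together with the proof of Proposition~\ref{prop-ext-Nploc} for $p=1$; \cite[Proposition~4.8 and the proof of Lemma~4.6]{BBnoncomp} for $p>1$) to get $\uhat\in\Nploc(\Ghat)$, and \cite[Theorem~4.1]{BBnoncomp} resp.\ Theorem~\ref{thm-intro} to get $g_\uhat\le A_0 g_u$ a.e.\ in $X$, with $A_0$ uniform by the hypothesis. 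The crucial simplification is that, because $\muhatin(\Xhat\setminus X)=0$, the integrals over $\Ghat$ collapse immediately to integrals over $X$:
\[
\int_{\Ghat}|\uhat|^p\,d\muhat=\int_X|u|^p\,d\mu
\quad\text{and}\quad
\int_{\Ghat}g_\uhat^p\,d\muhat\le A_0^p\int_X g_u^p\,d\mu,
\]
so there is nothing to sum over and no need for a bounded overlap constant. The last claim $\Ghat=\Xhat$ is then also immediate: $X$ is dense in $\Xhat$ and all the balls $\Bhat_x$ have the common radius $r_0$.

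Your route, by contrast, builds a countable cover $\{B_j\}$ with bounded overlap $M$, produces local extensions $\uhat_j$, and tests with the explicit (nonminimal) weak upper gradient $g=\sum_j g_{\uhat_j}\chi_{\Bhat_j}$. This is correct: the gluing is justified by the sheaf property, the identification with the formula~\eqref{eq-Leb-pt-2} is legitimate, and the final estimate follows with a constant depending on $M$, $A_0$, and $p$. But both the covering construction and the overlap argument are extra work, and the resulting bound is worse: the overlap factor $M$ appears in your estimate but is absent in the paper's. You could obtain the same sharper constant within your framework by using the minimal gradient $g_\uhat$ on $\Ghat$ (whose restriction to each $\Bhat_j$ agrees a.e.\ with $g_{\uhat_j}$ by locality of minimal upper gradients, see \cite[Lemma~2.23]{BBbook}) in place of the sum $\sum_j g_{\uhat_j}\chi_{\Bhat_j}$, which removes the dependence on $M$ and makes the bounded overlap superfluous.

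One small remark: the overlap bound you claim on $\Xhat$ is only verified for points of $X$. That is harmless in your argument precisely because $\muhatin(\Xhat\setminus X)=0$ reduces the integral to $X$ anyway, but it is worth being explicit that you are really only using the overlap on $X$.
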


Such assumptions are called \emph{semiuniformly local}, 
   and \emph{uniformly local} in the case where $r_x$ is independent of $x$,
in \cite[Definition~6.1]{BBsemilocal}.
  Riemannian manifolds always support at least semiuniformly local assumptions
and often uniformly local ones.
Uniformly local assumptions are natural e.g.\ on Gromov hyperbolic
spaces, see
Bj\"orn--Bj\"orn--Shanmugalingam~\cite{BBSunifPI}, \cite{BBShyptrace}
and Butler~\cite{butler}.
Semiuniformly local assumptions were also used by
e.g.\ Holopainen--Shan\-mu\-ga\-lin\-gam~\cite{HoSh}.

\begin{proof}
Let $\Bhat_x=\Bhat(x,r_x)$ and $\Ghat=\bigcup_{x \in X}\Bhat_x$.
By \cite[Proposition~4.8 and the proof of Lemma~4.6]{BBnoncomp}
(for $p >1$)
or
Proposition~\ref{prop-Leb-pt} and the proof of 
Proposition~\ref{prop-ext-Nploc}  (for $p=1$),
we get that $\uhat \in \Nploc(\Ghat)$.
By
\cite[Theorem~4.1]{BBnoncomp} (for $p >1$)
and Theorem~\ref{thm-intro} (for $p=1$),
we see that $g_\uhat \le A_0 g_u$ a.e.\ in $X$, where
$A_0$ only depends on $p$, $\Cmu$, $\CPI$ and $\la$.
Thus
\[
\int_{\Ghat} |\uhat|^p \, d\muhat= \int_{X} |u|^p \, d\mu < \infty
\quad \text{and} \quad
\int_{\Ghat} g_\uhat^p \, d\muhat \le  A_0^p\int_{X} g_u^p \, d\mu < \infty,
\]
i.e.\ $\uhat \in \Np(\Ghat)$.
If $r_x$ is independent of $x$, then clearly $\Ghat=\Xhat$.
\end{proof}

\section{Removable sets for Newtonian spaces}
\label{sect-rem}

\emph{We assume in this section that $1\le p<\infty$ and that
$Y=(Y,d,\mu_Y)$ is a metric measure space equipped
with a metric $d$ and a positive complete  Borel  measure $\mu_Y$
such that $0<\mu_Y(B)<\infty$ for all 
balls $B \subset Y$.
Moreover, $X\subset Y$ is such that $Y \subset \Xhat$. 
We also let $E=Y \setm X$ and assume that the inner measure satisfies
\begin{equation}   \label{eq-mu-Y,in}
\muYin(E)
:= \sup \{\mu_Y(A): A \subset E \text{ is $\mu_Y$-measurable}\}=0.
\end{equation}
}

Our main interest in this section is
removability of sets with zero  measure,
i.e.\ when $X \subset Y$  are two metric spaces with 
$\mu_Y(Y \setm X)=0$.
In order to be able (as before) to include the case when $Y=\Xhat$ and $X$ is a
nonmeasurable subset of $Y$, we merely impose the condition
\eqref{eq-mu-Y,in}.
This will only necessitate a little extra care in some of the formulations.
At the end of this section we give examples of nonmeasurable
 removable sets with zero inner measure.
Removability of sets with positive measure
is a different topic, related to extension domains,
see e.g.\ Haj\l asz--Koskela--Tuominen~\cite{HaKoTuo} and
Bj\"orn--Shan\-mu\-ga\-lin\-gam~\cite[Section~5]{BjShJMAA}.
As in \eqref{eq-muhat-Borel}, it follows
  that 
\[
\muYin(E)
= \sup \{\mu_Y(A): A \subset E \text{ is a Borel set in $Y$}\}.
\]

Since we want $Y$ to satisfy our standing assumption that balls
have positive measure, necessarily
$Y \subset \Xhat =\Yhat$.
In the nonmeasurable case, 
we cannot just let $\muX=\muY|_X$, but need to define
$\muX$ by letting
\begin{equation} \label{eq-muY}
    \mu_X(A \cap X)=\mu_Y(A) \quad 
   \text{for every } \muY \text{-measurable set }
A\subset Y.
\end{equation}
This
is well-defined since $\muYin(E)=0$, and
makes $\mu_X$ into a complete
Borel regular measure on $X$,
which
coincides with the restriction  $\mu_Y|_X$ when $X$ is $\muY$-measurable.

We note that q.e.\ defined equivalence classes
may depend on whether the capacity is $\CpX$ or $\CpY$,
whereas the a.e.\ condition coincides in both spaces,
due to \eqref{eq-mu-iff-muhat=0}.
So
for simplicity
we restrict the discussion to removability with respect to the
following spaces,
where we
implicitly 
	assume that $u\colon X\to\eR$ is
  defined pointwise in~$X$:
\begin{align*}
        \hNp (X) &= \{u : u=v \text{ a.e. for some } v \in \Np(X)\},\\
        \hDp (X) &= \{u : u=v \text{ a.e. for some } v \in \Dp(X)\}.
\end{align*}
In both cases we define $g_u=g_v$.
  This is well-defined a.e.\ and independent of the choice of
 $v$ such that $v=u$ a.e.
The spaces $\hNp(Y)$ and $\hDp(Y)$ are defined similarly.

\begin{deff}
 The set $E=Y \setm X$ is \emph{removable} for $\hNp(X)$ if $\hNp(X)=\hNp(Y)$
 in the sense that $\hNp(X)=\{u|_{X} : u \in \hNp(Y)\}$.
  If moreover, $g_{u,X}=g_{u,Y}$ a.e.\ in $X$ for every  $u \in \hNp(Y)$,
    then $E$ is \emph{isometrically removable} for $\hNp(X)$.

    Removability and isometric removability
  for $\hDp(X)$ are defined similarly.
\end{deff}

It is easily seen that removability for $\hNp$ is the same as for
the corresponding spaces of a.e.-equivalence classes
\begin{equation} \label{eq-quotient}
\hNp(X)/\!\simae
\quad \text{and} \quad  \hNp(Y)/\!\simae,
\end{equation}
where $u \simae v$ if $u-v=0$ a.e.
  However to make it clearer what exactly is meant, especially
  in the nonmeasurable case,
we prefer to work with the spaces $\hNp$ of pointwise defined functions.
In fact, the proofs below show that when $E$ is removable then
  any $\muY$-measurable extension of $u$ from $X$ to $Y$ will do the job.

Note also that the quotient spaces
in \eqref{eq-quotient}
are Banach spaces.
Since clearly $\|u\|_{\Np(X)} \le \|u\|_{\Np(Y)}$,
the bounded 
inverse theorem shows that the norms in these spaces are
equivalent when $E$ is removable for $\hNp(X)$.

As a first observation we deduce the following result.

\begin{prop} \label{prop-rem-Cp=0}
If $\CpY(E)=0$,
then 
$E$ is isometrically removable for $\hNp(X)$ and $\hDp(X)$.
\end{prop}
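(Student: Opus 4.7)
The plan is to show both inclusions of $\hNp(X)=\{u|_X:u\in\hNp(Y)\}$ together with the a.e.\ equality of the minimal \p-weak upper gradients; the same argument will yield the $\hDp$ case. For the easy direction, I would start with $u\in\hNp(Y)$ represented by some $v\in\Np(Y)$ $\muY$-a.e., observe that each curve in $X$ is also a curve in $Y$ and that $\Modp$ of curve families inside $X$ is the same whether computed in $X$ or in $Y$ (cf.~\eqref{eq-compare-Mod}), and conclude that $g_{v,Y}|_X$ is a \p-weak upper gradient of $v|_X$ on $X$. Using~\eqref{eq-muY} to transfer the $\muY$-a.e.\ equality to $\muX$-a.e., this gives $u|_X\in\hNp(X)$ with $g_{u,X}\le g_{u,Y}$ a.e.\ in $X$.

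The reverse inclusion is the heart of the argument. I would first upgrade the hypothesis $\CpY(E)=0$ to the existence of a \emph{Borel} set $E'\supset E$ with $\CpY(E')=0$, by the standard construction\/: pick Borel $\phi_j\in\Np(Y)$ with $\phi_j=1$ on $E$ and $\|\phi_j\|_{\Np(Y)}^p<2^{-j}$, set $w:=\sum_j\phi_j\in\Np(Y)$, and take $E':=\{w=\infty\}$; then $\min\{1,w/N\}$ is admissible for $\CpY(E')$ and yields $\CpY(E')\le\|w\|_{\Np(Y)}^p/N^p\to 0$ as $N\to\infty$. By Proposition~1.48 in~\cite{BBbook} this implies both $\muY(E')=0$ and $\Mod_{p,Y}(\Ga_{E'})=0$, where $\Ga_{E'}$ denotes the family of all curves in $Y$ meeting $E'$. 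Given $u\in\hNp(X)$, I would then pick $v\in\Np(X)$ with $u=v$ $\muX$-a.e., take a Borel minimal \p-weak upper gradient $g$ of $v$, and define the extensions
\[
\vt:=v\chi_{Y\setm E'},\qquad \gt:=g\chi_{Y\setm E'}+\infty\cdot\chi_{E'}
\]
on $Y$. Since $Y\setm E'\subset X$ is Borel in $Y$, $\gt$ is Borel and $\vt$ is $\muY$-measurable; any curve $\ga\notin\Ga_{E'}$ lies in $X$, and for \p-a.e.\ such $\ga$ the upper gradient inequality for $v,g$ transfers verbatim to $\vt,\gt$, so $\gt$ is a \p-weak upper gradient of $\vt$ on $Y$. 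Since $\muY(E')=0$, the $L^p$-norms of $\vt,\gt$ on $Y$ match those of $v,g$ on $X$, so $\vt\in\Np(Y)$ extends $u$, proving the reverse inclusion.

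Finally, isometric removability follows by running the extension above on a given $u\in\hNp(Y)$ with $g_{u,X}$ in place of $g$\/: this produces a \p-weak upper gradient $\gt$ on $Y$ of a function equal to $u$ $\muY$-a.e., whence $g_{u,Y}\le\gt=g_{u,X}$ a.e.\ in $X$, which combined with the forward inequality gives the claim. The $\hDp$ case goes through unchanged, since only the upper gradient needs to lie in $L^p$. The step I expect to be the main obstacle, and the reason having merely $\muY(E')=0$ would not suffice, is the verification that $\gt$ is a \p-weak upper gradient of $\vt$ on $Y$\/: a curve could meet $E'$ at a set of zero one-dimensional measure, in which case $\int_\ga\gt\,ds$ would not register the infinite value on $E'$ and the jump of $\vt$ from $v$ to $0$ along the curve would violate the upper gradient inequality. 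It is therefore crucial that $E'$ be a Borel set of zero \emph{capacity}, equivalently that the family of curves hitting $E'$ itself have zero \p-modulus, which is exactly what the capacity-zero construction above delivers.
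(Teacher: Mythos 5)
Your proof is correct and rests on the same key fact as the paper's, namely that $\CpY(E)=0$ implies both $\mu_Y(E)=0$ and that \p-almost no curve in $Y$ hits $E$ (via \cite[Proposition~1.48]{BBbook}), so that the upper gradient inequality for the extended function need only be checked along curves staying in $X$. The paper's own argument is noticeably shorter because it exploits a simplification you did not quite get to: since $\CpY(E)=0$ forces $\mu_Y(E)=0$, the set $X$ is automatically $\mu_Y$-measurable, so one can simply extend $u$ and $g$ by $0$ to $Y\setm X$; these extensions are then measurable on $Y$, and the already-established fact that \p-a.e.\ curve avoids $E$ makes the $0$-extended $g$ a \p-weak upper gradient of the $0$-extended $u$ on $Y$, with equal $L^p$-norms. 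Your detour through a Borel superset $E'$ with $\CpY(E')=0$ and the $\infty\cdot\chi_{E'}$ indicator is not wrong, but it is unnecessary: the measurability you are worried about is already delivered by $\mu_Y(E)=0$, and the jumps along curves that your final paragraph rightly flags are ruled out not by the $\infty$-indicator but by the fact that \p-a.e.\ curve never meets $E$ at all. In other words, the danger you identify (curves touching the removed set on a length-zero subset) is real if one only assumed $\mu_Y(E)=0$, but it is already defused at the level of curves, so no modification of the upper gradient is needed.
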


Note that no assumptions on $Y$ are needed 
(other than the standing assumptions from
the beginning of this section)
and that $X$ is
automatically measurable in this case,
since $\muY(E)=0$ (which follows directly from Definition~\ref{deff-Sob-cap}).

\begin{proof}
Let $\uhat \in \hDp(X)$ and let $u \in \Dp(X)$ be such that $u=\uhat$ a.e.\ in $X$.
Let $g$ be any \p-weak
upper gradient of $u$ in $X$. 
Extend $u$ and $g$ 
by $0$ to $Y \setm X$.
Note that as $X$ is measurable so are $u$ and $g$.
Since $\CpY(E)=0$, it follows from \cite[Proposition~1.48]{BBbook}
that \p-almost no curve in $Y$ hits $E$.
Hence $g$ is a \p-weak upper gradient of $u$ also on $Y$.
Since $u=\uhat$ a.e.\ in $X$, any 
extension
	of
	$\uhat$ to $Y$
	will coincide with $u$ a.e.\ in $Y$ and so
	belongs to $\hDp(Y)$.
Thus $E$ is isometrically removable both for $\hNp(X)$ and $\hDp(X)$.
\end{proof}

\begin{example} \label{ex-Rn-countable}
Let $Y=\R^n$, $n \ge 2$,  $1 \le p \le n$ and
let $E\subset \R^n$ be a countable or finite set.
Then it is 
well known that $C_{p}^{\R^n}(E)=0$, and thus
$E$ is isometrically removable for $\hNp(\R^n\setm E)$ and $\hDp(\R^n\setm E)$,
by Proposition~\ref{prop-rem-Cp=0}.

If $E \subset H$ is dense in a hyperplane $H$, 
then $\clE=H$ is not removable for
$\hNp(\R^n \setm \clE)$ nor for $\hDp(\R^n \setm \clE)$.
This follows from Theorem~\ref{thm-hN1-rem-global} below
since $\R^n \setm H$ is disconnected
and hence
does 
not support any global Poincar\'e inequality.

This shows that 
removability for nonclosed sets
cannot be achieved by only studying removability of their closures.
In Proposition~\ref{prop-Hnone} we give a much more
general result which includes this example as a special case.
\end{example}

The following is the main result in this section.

\begin{thm} \label{thm-hN1-rem-global} 
    Assume that $\muY$  is globally doubling
  and supports a global \p-Poincar\'e inequality on $Y$.
Consider the following statements\/\textup{:}
\begin{enumerate}
\item \label{F-None}
$E$ is removable for $\hNp(X)$.
\item \label{F-Done}
$E$ is removable for $\hDp(X)$.
\item \label{F-None-i}
$E$ is isometrically removable for $\hNp(X)$.
\item \label{F-Done-i}
$E$ is isometrically removable for $\hDp(X)$.
\item \label{F-X-const}
$X$ supports a
  global \p-Poincar\'e inequality with the same
  $C$ and $\la$ as on $Y$.
\item \label{F-X}
$X$ supports a
  global \p-Poincar\'e inequality.
\end{enumerate}
Then \ref{F-None-i} $\eqv$ \ref{F-Done-i} $\imp$
\ref{F-X-const} $\imp$ 
\ref{F-X} $\imp$ 
\ref{F-Done} $\imp$ \ref{F-None}.

If in addition $X$ is \p-path almost open in $Y$,
then \ref{F-None}--\ref{F-X} are all equivalent.
\end{thm}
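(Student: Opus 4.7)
The plan is to establish the cycle (c)$\eqv$(d)$\imp$(e)$\imp$(f)$\imp$(b)$\imp$(a), and then, under the added \p-path almost open hypothesis, close it by proving (a)$\imp$(c). Two links are essentially free: (e)$\imp$(f) is immediate, and (b)$\imp$(a) holds because any $\muY$-measurable representative $v\in\hDp(Y)$ of an extension of $u\in\hNp(X)$ satisfies $\int_Y|v|^p\,d\muY=\int_X|u|^p\,d\muX<\infty$ thanks to $\muYin(E)=0$, so $v\in\hNp(Y)$.

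For (c)$\eqv$(d), I would localize: given $u\in\hDp(X)$, the products $\phi\cdot\max\{-k,\min\{u,k\}\}$ with a Lipschitz cutoff $\phi$ of bounded support lie in $\hNp(X)$, and minimal \p-weak upper gradients are compatible with truncation and cutoffs. Applying (c) to such pieces and reassembling via the sheaf property of $\Dp$ transfers both the extension and the identity $g_{u,X}=g_{u,Y}$ a.e.\ to $\hDp$; the converse is analogous since $\hNp\subset\hDp$. For (d)$\imp$(e), Lemma~\ref{lem-PI-char} reduces testing the \p-PI on $X$ to bounded $u\in\Np(X)$. By (d) each such $u$ extends to a bounded $v\in\Np(Y)$ with $g_{v,X}=g_{v,Y}$ a.e.\ in $X$; applying the global \p-PI on $Y$ to $v$ and a ball $B\subset Y$ centred in $X$, and observing that the integrals over $B,\la B$ coincide with those over $B\cap X,\la B\cap X$ (since $\muYin(E)=0$), yields~\eqref{eq-PI-on-B} on $B\cap X$ with the same constants $C$ and~$\la$.

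The heart of the argument is (f)$\imp$(b). Because $\muYin(E)=0$, the measure $\muX$ from~\eqref{eq-muY} is globally doubling on $X$ with the same constant as $\muY$, and by (f) it supports a global \p-Poincar\'e inequality. Viewing $X$ as its own ambient space with $\Om=X$ (so $\Omhat=\Xhat$), Theorem~\ref{thm-intro} for $p=1$, and \cite[Theorem~4.1]{BBnoncomp} for $p>1$, produce for each $u\in\hDp(X)$ an extension $\uhat\in\hDp(\Xhat)$ satisfying $g_{\uhat}\le A_0 g_u$ a.e.\ in $X$. Since $Y\subset\Xhat$, restricting $\uhat$ and an $L^p$-upper gradient of $\uhat$ from $\Xhat$ to $Y$ gives $\uhat|_Y\in\hDp(Y)$ with $\uhat|_X=u$ a.e., establishing removability for $\hDp$.

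Finally, assume $X$ is \p-path almost open in $Y$; using $\muYin(E)=0$ one may further arrange $X$ to be $\muY$-measurable by modifying $E$ with a Borel null set. Then Lemma~\ref{lem-gu-on-p-path-open-Xhat} applied with $\Om=Y$ and $G=X$ immediately gives $g_{u,X}=g_{u,Y}$ a.e.\ in $X$ for every $u\in\Dploc(Y)$, upgrading the removability in (a) to the isometric statement (c) and closing the cycle. The main obstacle is (f)$\imp$(b): it depends on Theorem~\ref{thm-intro}, whose proof for $p=1$ replaces the $L^p$-reflexivity available only for $p>1$ by a Dunford--Pettis equi-integrability argument, and one must separately check that the \emph{global} \p-PI hypothesized in (f)---rather than only a local one---is precisely what forces the restriction $\uhat|_Y$ to land in $\hDp(Y)$ and not merely in $\hDp\loc(Y)$.
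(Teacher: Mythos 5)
Your overall architecture is the paper's: establish the chain (c)$\eqv$(d)$\imp$(e)$\imp$(f)$\imp$(b)$\imp$(a), then close it under the extra hypothesis by proving (a)$\imp$(c) via Lemma~\ref{lem-gu-on-p-path-open-Xhat}. The implications (b)$\imp$(a) (Proposition~\ref{prop-hDp=>hNp}), (d)$\imp$(e) (via Lemma~\ref{lem-PI-char}), (e)$\imp$(f) and (f)$\imp$(b) (Proposition~\ref{prop-PI=>rem}, which invokes Theorem~\ref{thm-intro} for $p=1$ and \cite[Theorem~4.1]{BBnoncomp} for $p>1$) are all handled essentially as in the paper, and your closing remarks about Dunford--Pettis versus reflexivity, and about the need for a \emph{global} Poincar\'e inequality in (f), correctly locate where the real work lies. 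Two steps, however, deserve comment.

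For (c)$\imp$(d) (the nontrivial half of the equivalence, which the paper isolates as Theorem~\ref{thm-isom-rem}), your sketch ``apply (c) to $\phi\cdot\max\{-k,\min\{u,k\}\}$ and reassemble via the sheaf property of $\Dp$'' does not quite close. The sheaf property lets you glue a function that already lies in $\Dp(G_1)$ and $\Dp(G_2)$ into one in $\Dp(G_1\cup G_2)$; here, by contrast, the truncated/cut-off pieces are genuinely \emph{different functions} whose extensions $\hat v_k\in\Np(Y)$ do not stabilize as $k\to\infty$ on any fixed ball (since $\min\{u,k\}\neq\min\{u,k+1\}$ wherever $u>k$). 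What is actually needed is a limiting argument: the paper first reduces to $u\ge0$, builds a pointwise nondecreasing sequence $u_k=(1-\dist(\,\cdot\,,B_X(x_0,k)))_\limplus\min\{u,k\}$, chooses representatives $\uhat_k$ nondecreasing $\CpY$-q.e.\ (using \cite[Corollary~1.60]{BBbook}), and then applies \cite[Lemma~1.52]{BBbook} to pass a common $L^p$ upper gradient to the limit; the general case is $u=u_\limplus-u_\limminus$. Your symmetric truncation $\max\{-k,\min\{u,k\}\}$ loses monotonicity, and without a monotone-limit or dominated-convergence device the limit function has no obvious reason to lie in $\Dp(Y)$.

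For (a)$\imp$(c), the clause ``one may further arrange $X$ to be $\muY$-measurable by modifying $E$ with a Borel null set'' is false: a set $E$ with $\muYin(E)=0$ but positive outer measure cannot be turned into a measurable set by removing or adding a null set. Fortunately the step is unnecessary. In Lemma~\ref{lem-gu-on-p-path-open-Xhat} the measurability hypothesis on $G$ is with respect to the measure on the \emph{inner} space; taking the lemma's pair to be $(X,Y)$ (with $Y$ in the role of $\Xhat$, as the relation between $\mu_X$ and $\mu_Y$ is of exactly the type the lemma handles), one puts $\Om=Y$ and $G=X$, and then $G$ is the whole space carrying $\mu_X$ and is trivially $\mu_X$-measurable. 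So the lemma applies directly, with no modification of $E$ or $X$, and yields $g_{u,X}=g_{u,Y}$ a.e.\ in $X$ for every $u\in\Dploc(Y)$, which upgrades (a) to (c) as you conclude.
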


As mentioned in the introduction, this
generalizes Theorem~C in Koskela~\cite{Koskela},
  see also Koskela--Shanmugalingam--Tuominen~\cite[p.~335]{KoShTu00}.
Koskela
obtained such a characterization of removability
for $W^{1,p}(\R^n\setm E)$ on unweighted $\R^n$, with $p>1$
and $E$
closed
(and thus $X=\R^n \setm E$ open and hence \p-path almost open).
In the classical situation, on unweighted $\R^n$, our result
thus extends Koskela's result to $p=1$.
The classical Sobolev spaces
$W^{1,p}(\R^n)$
and $W^{1,p}(\R^n \setm E)$, for
$E$ closed,
coincide with $\hNp(\R^n)/{\simae}$ and $\hNp(\R^n \setm E)/{\simae}$
(with the same norm),
respectively,
by Theorem~7.13 in Haj\l asz~\cite{Haj03}
(or \cite[Corollary~A.4]{BBbook}).
This is true also in weighted Euclidean spaces,
for \p-admissible weights when $p>1$, see \cite[Proposition~A.12]{BBbook}.
(A weight $w$ is \emph{\p-admissible} if $d\mu=w\,dx$
is a globally doubling measure supporting a global
\p-Poincar\'e inequality.)
For $p=1$ and a $1$-admissible weight,
Proposition~4.26 in Cheeger~\cite{Cheeg},
together with the arguments in \cite[Propositions~A.11 and~A.12]{BBbook},
implies that the norms are comparable,
see also Eriksson-Bique--Soultanis~\cite{SEB-Soultanis}.

Theorem~\ref{thm-A} below shows 
that the assumptions in Theorem~\ref{thm-hN1-rem-global}
can be fulfilled without $X$ being \p-path almost open in $Y=\Xhat$,
and that even in this case it is possible 
that \ref{F-None}--\ref{F-X} all hold.
Some of the implications hold under weaker assumptions and
we begin with deducing them.

\begin{prop} \label{prop-hDp=>hNp}
If $E$ is removable for $\hDp(X)$,
then it is removable for $\hNp(X)$.
\end{prop}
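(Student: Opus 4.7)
The plan is to exploit the trivial inclusion $\Np \subset \Dp$ (and hence $\hNp \subset \hDp$), so that the assumed removability of $E$ for $\hDp(X)$ automatically provides extension candidates in $\hDp(Y)$; the only missing ingredient will then be $L^p$-integrability on $Y$, which is inherited from the original function on $X$ via the defining identity $\muY(A)=\muX(A\cap X)$ for $\muY$-measurable $A\subset Y$. By approximation with simple functions and monotone convergence, this identity promotes itself to $\int_Y f\,d\muY=\int_X f|_X\,d\muX$ for every nonnegative $\muY$-measurable function $f$ on $Y$, which is the key computational tool throughout the proof.

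For the nontrivial inclusion, let $u\in\hNp(X)$ and pick a representative $w\in\Np(X)$ with $w=u$ a.e.\ in $X$, so that $\int_X|w|^p\,d\muX<\infty$. Since $u\in\hDp(X)$, the removability hypothesis supplies $\ut\in\hDp(Y)$ with $\ut|_X=u$ pointwise. Choose $v\in\Dp(Y)$ with $v=\ut$ a.e.\ in $Y$; then $v|_X=\ut|_X=u=w$ a.e.\ in $X$, and the mass identity above yields
\[
\int_Y |v|^p\,d\muY \;=\; \int_X |v|^p\,d\muX \;=\; \int_X |w|^p\,d\muX \;<\; \infty.
\]
Hence $v\in\Np(Y)$, and consequently $\ut\in\hNp(Y)$ is an $\hNp$-extension of $u$.

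The reverse inclusion $\{u|_X : u\in\hNp(Y)\}\subset\hNp(X)$ needs no removability hypothesis: given $\ut\in\hNp(Y)$ with representative $v\in\Np(Y)$, the restriction $v|_X$ is $\muX$-measurable, has $g_v|_X\in L^p(X,\muX)$ as an upper gradient in $X$ (each curve in $X$ is a curve in $Y$), and lies in $L^p(X,\muX)$ by the same mass identity; thus $v|_X\in\Np(X)$. Moreover $\muYin(E)=0$ ensures that every $\muY$-null set meets $X$ in a $\muX$-null set, so $\ut|_X=v|_X$ a.e.\ in $X$ and therefore $\ut|_X\in\hNp(X)$. The only mildly delicate point in the argument is the possible $\muY$-nonmeasurability of $X$, which forces one to work with the constructed measure $\muX$ rather than with the naive restriction $\muY|_X$; but this is precisely what the definition~\eqref{eq-muY} is designed to accommodate, and no further hypotheses are needed beyond the standing ones.
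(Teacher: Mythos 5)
Your proof is correct and follows essentially the same route as the paper: given $u \in \hNp(X) \subset \hDp(X)$, use removability for $\hDp$ to obtain a $\hDp(Y)$-extension, and then transfer the $L^p$-norm from $X$ to $Y$ via the mass identity coming from~\eqref{eq-muY} to conclude the extension lies in $\hNp(Y)$. The only difference is cosmetic: you spell out explicit choices of representatives and the reverse inclusion $\{u|_X : u \in \hNp(Y)\} \subset \hNp(X)$, which the paper treats as immediate.
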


\begin{proof}
    Let $u \in \hNp(X)$.
  Since $u \in \hDp(X)$ and $E$ is removable for $\hDp(X)$,
there exists
  $\uhat \in \hDp(Y)$ such that $\uhat=u$ in $X$.
As
$\|\uhat\|_{L^p(Y)}= \|u\|_{L^p(X)}<\infty$ 
by~\eqref{eq-muY},
we see that
$\uhat \in \hNp(Y)$.
Hence $E$ is removable for $\hNp(X)$.
\end{proof}

\begin{prop} \label{prop-PI=>rem}
 Assume that $\muX$ is
 doubling
 and supports a
 \p-Poincar\'e inequality within an open
   set $\Om\subset X$.
 Then $E \cap \Omhat$ is removable both for $\hNp(\Om)$ and $\hDp(\Om)$.
\end{prop}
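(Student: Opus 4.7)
The plan is to leverage the extension results of Section~\ref{sect-Xhat}, which extend Newtonian functions from $\Om$ into the full completion $\Xhat$, and then transport these extensions down to the intermediate space~$Y$.

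First I would take $u\in\hDp(\Om)$ and pick a representative $v\in\Dp(\Om)$ with $v=u$ $\muX$-a.e.\ in~$\Om$. The hypothesis that $\muX$ is doubling and supports a \p-Poincar\'e inequality within $\Om$ is exactly what is needed to invoke Theorem~\ref{thm-intro} (for $p=1$) or its predecessor \cite[Theorem~4.1]{BBnoncomp} (for $p>1$). These produce an extension $\vhat\in\Dp(\Omhat)$, where $\Omhat=\Xhat\setm\itoverline{X\setm\Om}$, such that $\vhat=v$ $\CpX$-q.e.\ in $\Om$ and $g_{\vhat,\Omhat}\le A_{0}g_{v,\Om}$ a.e.\ in $\Om$. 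In particular $g_{\vhat,\Omhat}\in L^p(\Omhat,\muhat)$.

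Next I would restrict $\vhat$ to $\Omhat\cap Y$ and verify it is a Newtonian (resp.\ Dirichlet) function there with respect to $\muY|_{\Omhat\cap Y}$. Because $Y\subset\Xhat$ and $\muYin(E\cap\Omhat)\le\muYin(E)=0$, the construction of $\muY$, $\muX$ and $\muhat$ via Borel extensions (cf.\ \eqref{eq-muY}, \eqref{eq-muhat-Borel} and the corrigendum of~\cite{BBnoncomp}) yields, for every nonnegative Borel function $h$ on $\Omhat$,
\[
\int_{\Omhat\cap Y} h\,d\muY
= \int_{\Om} h\,d\muX
= \int_{\Omhat} h\,d\muhat.
\]
Running this identity through the definition of \p-modulus, exactly as in~\eqref{eq-compare-Mod}, gives $\Mod_{p,\Omhat\cap Y}(\Ga)=\Mod_{p,\Omhat}(\Ga)$ for every curve family $\Ga$ in $\Omhat\cap Y$. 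Consequently $g_{\vhat,\Omhat}$ restricted to $\Omhat\cap Y$ remains a \p-weak upper gradient of $\vhat|_{\Omhat\cap Y}$ on $(\Omhat\cap Y,\muY)$ and lies in $L^p(\Omhat\cap Y,\muY)$, so $\vhat|_{\Omhat\cap Y}\in\Dp(\Omhat\cap Y)$. If in addition $u\in\hNp(\Om)$, then the same three-way identity applied to $|\vhat|^p$ gives $\|\vhat\|_{L^p(\Omhat\cap Y,\muY)}=\|v\|_{L^p(\Om,\muX)}<\infty$, upgrading the conclusion to $\vhat|_{\Omhat\cap Y}\in\Np(\Omhat\cap Y)$. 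Since $\vhat|_{\Omhat\cap Y}=u$ a.e.\ in~$\Om$, this shows that $u$ is the $\Om$-restriction of an element of $\hDp(\Omhat\cap Y)$, respectively $\hNp(\Omhat\cap Y)$.

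The reverse inclusion is immediate: if $w\in\Dp(\Omhat\cap Y)$ (or $\Np(\Omhat\cap Y)$), then $w|_\Om$ is $\muX$-measurable and any upper gradient of $w$ on $\Omhat\cap Y$ restricts to one of $w|_\Om$ on~$\Om$, so $w|_\Om\in\Dp(\Om)$ (or $\Np(\Om)$). Combining both inclusions yields the removability of $E\cap\Omhat$ for both $\hNp(\Om)$ and $\hDp(\Om)$. The main obstacle will be the measure-theoretic bookkeeping in the case when $E$ is $\muY$-nonmeasurable: one cannot simply restrict sets set-theoretically, but must argue through Borel approximants as in~\eqref{eq-muhat-Borel}. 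This is, however, precisely the same type of identification already established between $X$, $\Xhat$ and their measures at the beginning of Section~\ref{sect-Xhat}, and it transfers verbatim to the triple $(\Om,\Omhat\cap Y,\Omhat)$.
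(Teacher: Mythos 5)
Your proof is correct and follows essentially the same route as the paper's: take a $\Dp(\Om)$ representative, extend it to $\Omhat$ via Theorem~\ref{thm-intro} (resp.\ \cite[Theorem~4.1]{BBnoncomp} for $p>1$, suitably localized as in Remark~\ref{rem-various-ass}\,(c)), and observe that its restriction to $\Omhat\cap Y$ provides the desired extension, with the $L^p$-norm identity handling the $\hNp$ case. The only cosmetic difference is that the paper invokes Proposition~\ref{prop-hDp=>hNp} to reduce at once to the Dirichlet case, whereas you re-derive that reduction inline.
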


\begin{proof}
  By Proposition~\ref{prop-hDp=>hNp}
(with $\Om$ in place of $X$),
 it suffices to prove removability for
  $\hDp(\Om)$.
  Let $\uhat \in \hDp(\Om)$.
Then there is $u \in \Dp(\Om)$ such that $u=\uhat$ a.e.\ in $\Om$.
By
Theorem~\ref{thm-intro}
(when $p=1$)
  and \cite[Theorem~4.1]{BBnoncomp} (when $p>1$,
see Remark~\ref{rem-various-ass}\,(c)), 
there exists
$v \in \Dp(\Omhat)$ such that $v=u$ $\CpX$-q.e.\ in $\Om$.
Since $v=\uhat$ a.e.\ in $\Om$, any $\muY$-measurable extension
 of
 $\uhat$ to $Y \cap \Omhat$
 will coincide with $v$ a.e.\ in $Y \cap \Omhat$ and so
  belongs to $\hDp(Y \cap \Omhat)$.
Hence $E \cap \Omhat$ is removable for $\hDp(\Om)$.
\end{proof}

\begin{thm} \label{thm-isom-rem}
  The set $E$ is isometrically removable for $\hNp(X)$
  if and only if it is isometrically removable for $\hDp(X)$.
\end{thm}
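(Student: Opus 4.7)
The direction $\hDp$-isometric removability $\imp$ $\hNp$-isometric removability is essentially routine: any $u\in\hNp(X)\subset\hDp(X)$ extends to some $\tilde u\in\hDp(Y)$ with $g_{\tilde u,Y}=g_{u,X}$ a.e.\ in $X$, and by \eqref{eq-muY} together with $\muYin(E)=0$ we have $\|\tilde u\|_{L^p(Y)}=\|u\|_{L^p(X)}<\infty$, so $\tilde u\in\hNp(Y)$.

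For the converse, my plan is to fix a representative $u\in\Dp(X)$ with minimal $p$-weak upper gradient $g_u\in L^p(X)$, take any $\muY$-measurable extension $\tilde u$ of $u$ to $Y$ (essentially unique because $\muYin(E)=0$), and prove that $\tilde u\in\Dp(Y)$ with $g_{\tilde u,Y}=g_{u,X}$ a.e.\ in $X$. The idea is to approximate $u$ by bounded, compactly supported $\Np(X)$-functions via truncation and cutoff, invoke the assumed $\hNp$-iso-rem on them, and then pass to the limit. Concretely, with $x_0\in X$ fixed, put $u_k=\max\{\min\{u,k\},-k\}$ and choose Lipschitz cutoffs $\eta_j\colon Y\to[0,1]$ with $\eta_j\equiv 1$ on $B(x_0,j)$, $\eta_j\equiv 0$ outside $B(x_0,2j)$ and with Lipschitz constant $1/j$. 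Each product $u_k\eta_j|_X$ is bounded, supported in $2B(x_0,j)\cap X$, and admits the upper gradient $|u_k|g_{\eta_j}+\eta_j g_u\chi_{\{|u|<k\}}\in L^p(X)$, so it lies in $\Np(X)$. Its essentially unique $\muY$-measurable extension is $\tilde u_k\eta_j$, which by the assumed $\hNp$-iso-rem therefore lies in $\Np(Y)$ with $g_{\tilde u_k\eta_j,Y}=g_{u_k\eta_j,X}$ a.e.\ in $X$.

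Fixing a ball $B=B(x_0,R)\subset Y$ and choosing $j>2R$ forces $\eta_j\equiv 1$ on $B$, whence locality of minimal upper gradients gives $g_{\tilde u_k,Y}=g_{u_k,X}=g_u\chi_{\{|u|<k\}}$ a.e.\ in $B\cap X$, and hence a.e.\ in $B$ by $\muYin(E)=0$. Letting $R\to\infty$ and writing $\tilde g$ for the extension by zero of $g_u$ from $X$ to $Y$, this yields $g_{\tilde u_k,Y}=\tilde g\chi_{\{|\tilde u|<k\}}$ a.e.\ in $Y$ together with the uniform bound $\|g_{\tilde u_k,Y}\|_{L^p(Y)}\le\|g_u\|_{L^p(X)}$. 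Finally, as $k\to\infty$, $\tilde u_k\to\tilde u$ pointwise a.e.\ in $Y$ (after redefining $\tilde u=0$ on the $\muY$-null set $\{|\tilde u|=\infty\}$), while $g_{\tilde u_k,Y}\to\tilde g$ in $L^p(Y)$ by dominated convergence. The standard limit-of-upper-gradients lemma (via Fuglede, cf.\ the proof of \cite[Proposition~2.3]{BBbook}) then produces a subsequence witnessing that $\tilde g$ is a $p$-weak upper gradient of $\tilde u$ in $Y$; thus $\tilde u\in\Dp(Y)$ and $g_{\tilde u,Y}\le g_u$ a.e.\ in $X$. The reverse inequality follows from the modulus identity $\Mod_{p,X}=\Mod_{p,Y}$ on curve families in $X$ (cf.\ \eqref{eq-compare-Mod}), which makes $g_{\tilde u,Y}|_X$ a $p$-weak upper gradient of $u$ in $X$. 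The principal difficulty is precisely this combined juggling of truncation, cutoff, and $L^p$-limit of minimal upper gradients, since $u$ itself need not lie in $\hNp(X)$ and one has to recover the upper-gradient inequality along $p$-a.e.\ curve in $Y$ rather than merely an $L^p$-bound.
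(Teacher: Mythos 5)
Your proposal follows the same overall strategy as the paper's proof --- truncate, localize, transfer via isometric $\hNp$-removability, and pass to a limit --- but implements it differently in two respects, one of which creates a soft spot.

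First, the organizational differences. You truncate symmetrically with $u_k=\max\{\min\{u,k\},-k\}$ and carry a separate cutoff index $j$, then recover $g_{\tilde u_k,Y}$ on exhausting balls via locality; the paper instead reduces to $u\ge 0$ via $u=u_\limplus-u_\limminus$, uses a single monotone sequence
\[
u_k(x)=(1-\dist(x,B_X(x_0,k)))_\limplus\min\{v(x),k\},
\]
and then invokes \cite[Corollary~1.60]{BBbook} to arrange $\uhat_{k+1}\ge\uhat_k$ \emph{everywhere}, so that $\uhat=\lim_k\uhat_k$ is a genuine pointwise limit. Both schemes yield the key estimate $g_{\tilde u_k,Y}\le g_{u,X}$ a.e.\ in $X$ and the $L^p$-convergence of the gradients; your route avoids the sign-splitting and is arguably more direct, at the price of a slightly longer locality computation.

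Second, the limit step, where you should be more careful. After redefining $\tilde u=0$ on the $\mu_Y$-null set $\{|\tilde u|=\infty\}$, the pointwise convergence $\tilde u_k\to\tilde u$ fails exactly on that set, and a $\mu_Y$-null set need not be curve-negligible; so a bare appeal to Fuglede does not immediately deliver the upper-gradient inequality along $p$-a.e.\ curve for the redefined $\tilde u$. The cleaner choice is \emph{not} to redefine: then $\tilde u_k\to\tilde u$ pointwise everywhere, $\tilde u$ is finite a.e., and \cite[Lemma~1.52]{BBbook} (designed for exactly this situation, and the reason the paper restricts $\Dp$ to functions finite a.e., cf.\ Remark~\ref{rmk-Dp}) gives directly that $\tilde g$ is a $p$-weak upper gradient of $\tilde u$. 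Alternatively, the Fuglede route from the proof of \cite[Proposition~2.3]{BBbook} does work, but it produces a possibly different a.e.-representative of $\tilde u$; that is harmless here since you only need membership in $\hDp(Y)$, but it should be said. Apart from this, and a small typo ($j>R$ suffices for $\eta_j\equiv1$ on $B(x_0,R)$, not $j>2R$), the argument is sound, and the easy direction matches the paper's use of Proposition~\ref{prop-hDp=>hNp}.
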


\begin{proof}
Assume first that $E$ is isometrically removable for $\hDp(X)$.
By Proposition~\ref{prop-hDp=>hNp}, the set $E$ is removable for
  $\hNp(X)$.
As the removability for $\hDp(X)$ is isometric, it follows
directly from the definition that $E$ is
isometrically removable also for $\hNp(X)$.

Conversely, assume that $E$ is isometrically removable for $\hNp(X)$.
Let $u \in \hDp(X)$ and let $v \in \Dp(X)$ be such that $v=u$ a.e.\ in $X$.
First consider the case when  $u \ge 0$, so that we can assume also $v\ge 0$.
 Fix $x_0 \in X$
and let 
\[
    u_k(x)=(1-\dist(x,B_X(x_0,k))_\limplus \min\{v(x),k\},
    \quad k=1,2,\ldots.
\]
Then $u_k \in \Np(X)$ and there is
$\uhat_k \in \Np(Y)$ such that $\uhat_k=u_k$ a.e.\ in $X$, and thus
$\CpX$-q.e.\ in $X$.
As $\uhat_{k+1} \ge \uhat_k$ a.e., it follows
from Corollary~1.60 in \cite{BBbook}
that  $\uhat_{k+1} \ge \uhat_k$ $\CpY$-q.e.,
and thus we can choose $\uhat_{k+1}$ so that $\uhat_{k+1} \ge \uhat_k$
everywhere.
Hence $\uhat=\lim_{k \to \infty} \uhat_k$
is well-defined pointwise.

Next, let $\ghat=g_{u,X}$, extended measurably
to $Y \setm X$.
By the isometric removability  and truncation,
$g_{\uhat_k,Y} = g_{u_k,X} \le  \ghat$ a.e.\ in $B_Y(x_0,k)$,
and thus 
$ \ghat$ is 
a \p-weak upper gradient of $\uhat_k$
in $B_Y(x_0,k)$.
Since by~\eqref{eq-muY},
\[
 \mu_Y(\{x \in Y : |\uhat(x)|=\infty\})
= \mu_X(\{x \in X : |v(x)|=\infty\})
=0, 
\]
it follows from
Lemma~1.52 in \cite{BBbook} that $\ghat$ is a \p-weak upper gradient of $\uhat$
in each $B_Y(x_0,k)$ and hence in $Y$.
Therefore $\uhat \in \Dp(Y)$, and clearly $\uhat=u$ a.e.\ in $X$.
Now any $\muY$-measurable extension of $u$ will
coincide with $\uhat$ a.e.\ in $Y$ and so belongs to $\hDp(Y)$.
For general $u$ we
write $u=u_\limplus - u_\limminus$,
extend $u_\limplus$ and $u_\limminus$
as above, and  take their difference.
Thus $E$ is isometrically removable for $\hDp(X)$.
\end{proof}  

\begin{remark} \label{rmk-Dp}
In the proof of Theorem~\ref{thm-isom-rem}, we used the fact 
that functions in $\Dp(X)$ are finite a.e.\
when applying  \cite[Lemma~1.52]{BBbook}.
This is the reason why 
our definition of $\Dp(X)$ slightly deviates from the one
in \cite{BBbook}, see Section~\ref{sect-ug}.
It may also be more natural to only consider functions
that are finite a.e. 
\end{remark}

\begin{proof}[Proof of Theorem~\ref{thm-hN1-rem-global}]
\ref{F-None-i} $\eqv$ \ref{F-Done-i}  
This follows from Theorem~\ref{thm-isom-rem}.

  \ref{F-None-i} $\imp$ \ref{F-X-const}
  Let $u \in \Np(X) \subset \hNp(X)$. 
As $E$ is isometrically removable for $\hNp(X)$, 
 there is $\uhat \in \Np(Y)$ such that $\uhat=u$ a.e.\ in $X$
and $g_{\uhat,Y}=g_{\uhat,X}=g_{u,X}$ a.e.\ in $X$,
see Section~\ref{sect-ug}.
Let $B_X=B_X(x,r)$ be a ball in $X$, and $B_Y=B_Y(x,r)$ be the corresponding
  ball in $Y$. Then 
in view 
of~\eqref{eq-muY}
and using the Poincar\'e inequality on $Y$,
  \begin{align*}
    \vint_{B_X} |u-u_{B_X}|\, d\muX
    &= \vint_{B_Y} |\uhat-\uhat_{B_Y}|\, d\muY\\
    &\le C r  \biggl(\vint_{\la B_Y} g_{\uhat,Y}^p \, d\muY\biggr)^{1/p}
     =  C r  \biggl(\vint_{\la B_X} g_{u,X}^p \, d\muX\biggr)^{1/p}.
  \end{align*}
Thus 
$X$ supports a global \p-Poincar\'e inequality with
  the same constants $C$ and $\la$ as on $Y$,
  by Lemma~\ref{lem-PI-char}.

\ref{F-X-const} $\imp$ \ref{F-X}
  This is trivial.

\ref{F-X} $\imp$ \ref{F-Done}
It follows directly from \eqref{eq-muY} that
  $\muX$ is globally doubling
  on $X$. 
Hence this implication follows from Proposition~\ref{prop-PI=>rem}.

\ref{F-Done} $\imp$ \ref{F-None}
This follows from Proposition~\ref{prop-hDp=>hNp}.

Finally, if $X$ is \p-path almost open in $Y$,
then \ref{F-None} $\imp$ \ref{F-None-i} by
Lemma~\ref{lem-gu-on-p-path-open-Xhat}.
\end{proof}

Under local assumptions we obtain the following result.
Recall that local assumptions are inherited by open sets and
  thus $X$ and $Y$ in the following theorem can be replaced
  by $\Om\cap X$ and $\Om$, respectively, for any open set $\Om\subset Y$,
  cf.~Remark~\ref{rem-loc-spcs}.

\begin{thm}
\label{thm-hN1-rem-local}
  \textup{(Local version)}
    Assume that $\muY$  is locally doubling
    and supports a local \p-Poincar\'e inequality on $Y$.
Consider the following statements\/\textup{:}
\begin{enumerate}
\item \label{E-rem}
$E$ is removable for $\hNp(X)$.
\item \label{E-rem-Done}
$E$ is removable for $\hDp(X)$.
\item \label{E-None-i}
$E$ is isometrically removable for $\hNp(X)$.
\item \label{E-Done-i}
$E$ is isometrically removable for $\hDp(X)$.
\item \label{E-Y}
  Whenever $x \in X$ and
  the Poincar\'e inequality \eqref{eq-PI-on-B} holds for
  a ball $B_Y(x,r)$ in $Y$, 
  it
holds for
the ball $B_X(x,r)$ in $X$  with the same constants $C$ and~$\la$.
\item \label{E-X}
There is a cover of $Y$ by at most countably many balls 
$B_{Y,j}=B_Y(x_j,r_j)$, $x_j\in X$, such that
the \p-Poincar\'e inequality holds within each ball
$B_{X,j}=B_X(x_j,r_j)$. 
\end{enumerate}
Then
\[
  \textstyle 
  \ref{F-None} \revimp \ref{F-Done} \revimp
  \ref{F-None-i} \eqv \ref{F-Done-i} \imp
  \ref{F-X-const} \imp
  \ref{F-X}.
\]
If in addition $X$ is \p-path almost open in $Y$,
then \ref{F-None}--\ref{F-X} are all equivalent.
\end{thm}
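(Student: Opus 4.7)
The plan is to adapt the proof of Theorem~\ref{thm-hN1-rem-global} to the local setting, with conditions (e) and (f) playing the role of the global Poincar\'e inequality. Several implications transfer essentially unchanged: (c)$\Leftrightarrow$(d) is Theorem~\ref{thm-isom-rem}; (c)$\Rightarrow$(b) is immediate from the definition of isometric removability; and (b)$\Rightarrow$(a) is Proposition~\ref{prop-hDp=>hNp}.

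For (c)$\Rightarrow$(e), I fix $x \in X$ for which \eqref{eq-PI-on-B} holds within $B_Y(x,r)$ with constants $C,\lambda$. For any bounded $u \in \Np(X)$, isometric removability produces an extension $\uhat \in \Np(Y)$ with $\uhat = u$ a.e.\ in $X$ and $g_{\uhat,Y} = g_{u,X}$ a.e.\ in $X$. Applying the PI to $\uhat$ on any ball $B \subset B_Y(x,r)$ centered in $X$, and transferring integrals from $\muY$ to $\muX$ via \eqref{eq-muY}, yields the PI for $u$ on the corresponding ball in $X$ with the same $C$ and $\lambda$; Lemma~\ref{lem-PI-char} then promotes this to the PI within $B_X(x,r)$ on $X$. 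For (e)$\Rightarrow$(f), I exploit that $\muYin(E)=0$ together with the fact that balls have positive $\muY$-measure to deduce that $X$ is dense in $Y$; the local PI on $Y$ combined with the Lindel\"of property yields a countable cover of $Y$ by balls $B_Y(y_k,s_k)$ in which the PI holds, and by density each center $y_k$ can be replaced by some $x_k \in X \cap B_Y(y_k,s_k/2)$, giving a cover with centers in $X$ in which the PI is inherited and hence, by (e), also holds on the $X$-side.

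To close the loop under the $p$-path almost open hypothesis, two further implications are needed. First, (a)$\Rightarrow$(c) follows directly from Lemma~\ref{lem-gu-on-p-path-open-Xhat} applied with $\Om = Y$ and $G = X$: any extension $\uhat \in \hNp(Y)$ of $u \in \hNp(X)$ automatically satisfies $g_{\uhat,Y} = g_{u,X}$ a.e.\ in $X$, which is precisely isometric removability. Second, for (f)$\Rightarrow$(b), I apply Proposition~\ref{prop-PI=>rem} with $\Om = B_{X,j}$ to each ball in the cover, producing extensions $\uhat_j \in \hDp(B_{Y,j})$ of $u|_{B_{X,j}}$. On the overlap $B_{Y,j} \cap B_{Y,k}$ both extensions coincide a.e.\ with $u$ on the dense full-measure part $X \cap B_{Y,j} \cap B_{Y,k}$ and hence a.e.\ on the overlap thanks to $\muYin(E)=0$. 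Picking representatives in $\Dp$ and using the fact that a.e.-coincident $\Dp$-functions are $\CpY$-q.e.\ equal, I modify them on a $\CpY$-null set to enforce pointwise agreement on overlaps and glue via the sheaf property for $\Dp$ from Section~\ref{sect-ug} into a single representative of some $\uhat \in \hDp(Y)$.

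The main technical obstacle will be this final gluing in (f)$\Rightarrow$(b): patching the locally-defined extensions into a single element of $\hDp(Y)$ demands care with measurability because $E$ is only assumed to have zero inner measure, not to be $\muY$-measurable; the identity $\muY(A) = \muX(A \cap X)$ for $\muY$-measurable $A$ from \eqref{eq-muY}, together with the Lindel\"of property, will be used throughout to reduce the argument to one that is insensitive to whether $E$ itself is measurable.
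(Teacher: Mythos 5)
Your overall architecture matches the paper's: (c)$\Leftrightarrow$(d) via Theorem~\ref{thm-isom-rem}, (d)$\Rightarrow$(b) trivial, (b)$\Rightarrow$(a) via Proposition~\ref{prop-hDp=>hNp}, (c)$\Rightarrow$(e) by adapting the global argument with Lemma~\ref{lem-PI-char}, (e)$\Rightarrow$(f) by Lindel\"of plus density of $X$ in $Y$ (you need to halve radii more carefully than you state, but the idea is fine), and (a)$\Rightarrow$(c) under the $p$-path almost open hypothesis via Lemma~\ref{lem-gu-on-p-path-open-Xhat}. That all agrees with the paper.

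The genuine gap is in (f)$\Rightarrow$(b). Applying Proposition~\ref{prop-PI=>rem} on each $B_{X,j}$ gives local extensions $u_j\in D^p(B_{Y,j})$, and the $C_p^Y$-q.e.\ agreement on overlaps lets you patch these into a function $v$ with $v=u_j$ q.e.\ on each $B_{Y,j}$ --- but this only yields $v\in D^p_{\mathrm{loc}}(Y)$, not $v\in D^p(Y)$. The ``sheaf property'' you invoke produces an upper gradient of the form $\sum_j g_{u_j}\chi_{B_{Y,j}}$, and the extension step behind Proposition~\ref{prop-PI=>rem} only controls each $g_{u_j,B_{Y,j}}$ by $A_j\,g_{u,X}$ with a constant $A_j$ that depends on the local doubling and Poincar\'e constants in $B_{X,j}$; nothing prevents $A_j\to\infty$, so you do not obtain a single $L^p(Y)$ upper gradient. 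This is exactly where the $p$-path almost open hypothesis enters the paper's proof: by Lemma~\ref{lem-gu-on-p-path-open-Xhat}, $g_{u_j,Y}=g_{u,X}$ a.e.\ in $B_{X,j}$ (so effectively $A_j=1$), and then a compactness argument (every curve in $Y$ is covered by finitely many $B_{Y,j}$) shows that $g_{u,X}$, extended measurably to $Y\setminus X$, is an honest $p$-weak upper gradient of $v$ on all of $Y$, giving $v\in D^p(Y)$. Your proposal neither invokes Lemma~\ref{lem-gu-on-p-path-open-Xhat} in this step nor uses the $p$-path almost open hypothesis at all there, so the conclusion $\uhat\in\hDp(Y)$ is unjustified.

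Finally, the obstacle you flag --- nonmeasurability of $E$ --- is not where the difficulty lies. Since $u_j,u_k$ are $\muY$-measurable on $B_{Y,j}\cap B_{Y,k}$, the set $\{u_j\ne u_k\}$ is $\muY$-measurable and has measure $\mu_X(\{u_j\ne u_k\}\cap X)=0$ by \eqref{eq-muY}; the gluing itself is insensitive to whether $E$ is measurable. The real issue is the quantitative control of the upper gradient described above.
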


Note that $Y=\R$ with $E=\{0\}$ shows that
in order for the 
equivalences in the
last part to hold it is not
  possible to replace \ref{F-X} by the assumption that
  ``$X$ supports a local \p-Poincar\'e inequality''.

\begin{proof}
\ref{E-None-i} $\eqv$ \ref{E-Done-i}  
		This follows from Theorem~\ref{thm-isom-rem}.

\ref{F-Done-i} $\imp$ \ref{F-Done}
This is trivial.

\ref{F-Done} $\imp$ \ref{F-None}
This follows from Proposition~\ref{prop-hDp=>hNp}.

\ref{E-None-i} $\imp$ \ref{E-Y}
The proof of this implication is similar to the proof of
the corresponding implication in
Theorem~\ref{thm-hN1-rem-global}.

\ref{E-Y} $\imp$ \ref{E-X}
Since $Y$ is Lindel\"of and supports a local \p-Poincar\'e inequality, 
this is straightforward.

Now assume that $X$ is \p-path almost open in $Y$.

\ref{F-None} $\imp$ \ref{F-None-i} This follows from
  Lemma~\ref{lem-gu-on-p-path-open-Xhat}.

\ref{E-X} $\imp$ \ref{E-rem-Done}
Since $\muY$ is locally doubling on $Y$, we may assume
that the cover $B_{Y,j}$ has been chosen so that
$\muY$ is doubling within each $B_{Y,j}$.
It follows directly from \eqref{eq-muY}
that $\muX$ is doubling within each $B_{X,j}$.
Let $\uhat \in \hDp(X)$.
Then there is $u \in \Dp(X)$ such that $u=\uhat$ a.e.\ in $X$.
Note that $Y \subset \Xhat$.  

Using
Theorem~\ref{thm-intro}
(when $p=1$)
    and \cite[Theorem~4.1]{BBnoncomp} (when $p>1$),
  we can find $u_j \in \Dp(B_{Y,j})$ 
such that $u_j=u$ $\CpX$-q.e.\ in $B_{X,j}$, $j=1,2,\ldots$\,.
As $u_i,u_j \in D^p(B_{Y,i} \cap B_{Y,j})$ and
the set $\{y \in B_{Y,i} \cap B_{Y,j} : u_i(y) \ne u_j(y)\}$
has measure zero, it must be of zero $\CpY$-capacity for all $i,j$.
We can thus construct $v\in \Dploc(Y)$
such that $v=u_j$ $\CpY$-q.e.\ in $B_{Y,j}$, $j=1,2,\ldots$\,,
and hence $v=u$ $\CpX$-q.e.\ in $X$.

Since $X$ is \p-path almost open in $Y$, we have $g_{u_j,Y}=g_{u,X}$
a.e.\ in $B_{X,j}$,
by Lemma~\ref{lem-gu-on-p-path-open-Xhat}.
As every curve $\ga$ in $Y$ is compact, it
can be covered by finitely many $B_{Y,j}$. 
From this it follows that
$g_{u,X}$ (extended measurably to $Y \setm X$)
is a \p-weak upper gradient also of $v$ in $Y$,
and thus $v \in \Dp(Y)$.
Since $v=\uhat$ a.e.\ in $X$, any $\muY$-measurable extension of
  $\uhat$ will belong to $\hDp(Y)$.
Hence $E$ is removable for $\hDp(X)$.
\end{proof}

The following result, albeit a bit trivial, gives us plenty of examples
of nonmeasurable removable sets with zero inner measure.
Consider e.g.\ $Y$ to be the von Koch snowflake curve
(see e.g.\ \cite[Example~1.23]{BBbook}) and $X \subset Y$ be any nonmeasurable
subset with full outer measure.

\begin{prop} \label{prop-no-curves}
  Assume that there are  no or \p-almost no curves in $Y$,
  i.e.\ that $\Mod_{p,Y}(\Ga)=0$, where $\Ga$ is the collection
  of all nonconstant rectifiable curves in $Y$.
  Then any $E\subset Y$ satisfying \eqref{eq-mu-Y,in} 
is isometrically removable for $\hNp(X)$ and $\hDp(X)$.
\end{prop}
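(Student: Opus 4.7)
The whole proposition hinges on the observation that the hypothesis $\Mod_{p,Y}(\Ga)=0$ makes the upper gradient inequality vacuous for $p$-almost every curve: the zero function is a $p$-weak upper gradient of \emph{every} $\mu_Y$-measurable function $u$ on $Y$. Applying the Koskela--MacManus approximation (quoted just after Definition~\ref{deff-ug}) to $g\equiv 0\in L^p(Y)$, I obtain honest upper gradients $g_j$ of $u$ with $\|g_j\|_{L^p(Y)}\to 0$, so any $\mu_Y$-measurable function that is finite a.e.\ on $Y$ automatically lies in $\Dp(Y)$. Moreover the minimal $p$-weak upper gradient must satisfy $g_{u,Y}=0$ a.e.

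The same reasoning works inside $X$: every nonconstant rectifiable curve in $X$ is also a curve in $Y$, and the identity $\Mod_{p,X}=\Mod_{p,Y}$ on curve families in $X$ (established in the discussion around~\eqref{eq-compare-Mod}, now with $\mu_X$ defined via~\eqref{eq-muY}) forces the family of all nonconstant rectifiable curves in $X$ to have zero $p$-modulus too. Hence $g_{u,X}=0$ a.e.\ for every $u\in\hDp(X)$ as well.

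Given $u\in\hDp(X)$ with a representative $v\in\Dp(X)$, I would take any $\mu_Y$-measurable extension $\hat v\colon Y\to\eR$ of $v$; such an extension exists by the same construction that produced $\muhat$-measurable extensions from $X$ to $\Xhat$ in Section~\ref{sect-Xhat}, applied now via~\eqref{eq-muY}. Then $\hat v$ is $\mu_Y$-measurable, finite a.e.\ on $Y$ (since $v$ is finite a.e.\ on $X$ and $\muYin(E)=0$), and admits the zero function as a $p$-weak upper gradient, so by the first paragraph $\hat v\in\Dp(Y)$. Any other $\mu_Y$-measurable extension of $u$ agrees with $\hat v$ a.e.\ and therefore also represents an element of $\hDp(Y)$. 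If in addition $v\in\Np(X)$, then $\|\hat v\|_{L^p(Y)}=\|v\|_{L^p(X)}$ by~\eqref{eq-muY}, so $\hat v\in\Np(Y)$ and $u\in\hNp(Y)$. Combined with $g_{u,X}=0=g_{u,Y}$ a.e., this yields isometric removability for both $\hNp(X)$ and $\hDp(X)$.

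The only mildly delicate point is the measurability bookkeeping: when $X$ is genuinely $\mu_Y$-nonmeasurable one must invoke the extension principle from Section~\ref{sect-Xhat} to produce $\hat v$, rather than just extending $v$ by zero on $E$. Every other step, including the identification of the minimal $p$-weak upper gradient, trivializes under the vacuity hypothesis, so the proposition is essentially a consistency check for the definitions once this extension issue is handled.
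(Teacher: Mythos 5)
Your argument is correct and rests on the same key observation as the paper: the hypothesis $\Mod_{p,Y}(\Ga)=0$ makes the upper gradient condition vacuous, so $g_u=0$ a.e.\ for every measurable $u$ on $X$ or on $Y$, and removability boils down to comparing $L^p$-integrals via~\eqref{eq-muY}. The one organizational difference is at the end: the paper proves isometric removability for $\hNp(X)$ (since $\Np=L^p$ with identical norm once $g_u\equiv 0$) and then invokes Theorem~\ref{thm-isom-rem} to transfer this to $\hDp(X)$, whereas you handle $\hDp(X)$ directly by extending a representative and verifying membership in $\Dp(Y)$ via the Koskela--MacManus approximation. Your route is self-contained and a bit more explicit about the measurability bookkeeping in the nonmeasurable case; the paper's route is shorter because Theorem~\ref{thm-isom-rem} already packages exactly the reduction you redo by hand. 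Either is fine.
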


\begin{proof}
  In this case $g_u=0$ a.e.\ for every measurable function
  $u$ on $X$ or $Y$,
and so 
$\hNp(X)=\Np(X)=L^p(X)$ and $\hNp(Y)=\Np(Y)=L^p(Y)$.
It thus follows directly 
from~\eqref{eq-muY}
that $E$ is removable for $\hNp(X)$.
Since
$g_{u,X}=g_{u,Y}$ a.e.\ in $X$,
the removability is isometric.
By Theorem~\ref{thm-isom-rem},
$E$ is isometrically removable also for $\hDp(X)$.
\end{proof}

\section{Extension from a non-\texorpdfstring{\p}{p}-path almost open set}

\label{sec:counterexample}

We are now going to construct a set $X \subset \R^2$
which satisfies the assumptions in
Theorem~\ref{thm-intro}
but
is not \p-path almost open in $\R^2$.
However,
its complement 
  is isometrically removable for $\hNp(X)$ and $\hDp(X)$.

We first construct a planar Cantor set $C\subset [0,1] \times [0,1]$
as follows.
Let $H_0=[0,1]$ and for every $k=0,1,\ldots$\,, let $H_{k+1}$
be the set obtained by removing from the centre of
every interval in $H_k$ the open interval of length $2^{-2k-1}$.
Then let $C=\bigcap_{k=1}^{\infty}(H_k \times H_k)$
which is a planar Cantor set.
This set projects (orthogonally)
onto full intervals on the lines
\begin{equation} \label{eq-lines}
y= \pm \tfrac12 x + c
\quad \text{and} \quad
y=\pm 2x+c,
\end{equation}
but has zero length projections on all other
lines. 
This is easy to check by sketching the set $H_1\times H_1$ and then
	noting the self-similarity of the construction.
In particular, $C$
has $1$-dimensional
Hausdorff measure $0 <\Hone(C)< \infty$
(where the latter inequality is easy to show).

The Cantor set $C$ is often called the
four corners Cantor set, as well as the 
Garnett--Ivanov set
in complex analysis, since
Garnett~\cite{garnett70} and 
Ivanov~\cite[footnote on p.\ 346]{ivanov75}
(independently) showed that it is removable for bounded analytic functions.\footnote{
For the historically interested reader it may be worth noting that
Veltmann~\cite{veltmannA} considered planar Cantor sets in 1882
before Cantor~\cite[p.~590 (p.~407 in \emph{Acta Math.})]{cantor}
published his ternary set in 1883.}

Let next $\{q_j\}_{j=1}^{\infty}$ be an enumeration of $\Q^2$ and define
\begin{equation}    \label{eq-def-A}
A=\bigcup_{j=1}^{\infty} (q_j+C),
\end{equation}
i.e.\ we shift $C$
by all rational numbers and take the union.
We are now going to show the following properties
for $X=\R^2 \setm A$.

\begin{thm} \label{thm-A}
  Let $X = \R^2 \setm A$, where $A \subset \R^2$ is as 
in \eqref{eq-def-A}.
Also let
$\Om'$ be a nonempty open subset of $\R^2$
and $\Om = \Om' \cap X$, all sets being equipped with
the Lebesgue measure $\mathcal L^2$.
Then the following are true\/\textup{:}
\begin{enumerate}
\item \label{k-isom}
  $A \cap \Om'$ is isometrically removable
  for $\hNp(\Om)$\/\textup{;}
\item \label{k-PI}
  $X$ supports a global $1$-Poincar\'e inequality\/\textup{;}
\item \label{k-pathalmostopen}
  $\Om$ is not \p-path almost open in $\R^2$.
\end{enumerate}
\end{thm}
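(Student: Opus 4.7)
The plan is to treat the three claims in turn, using the common ingredient that $A$ is \emph{purely $1$-unrectifiable}. Indeed, by Besicovitch's projection theorem, $C$ is purely $1$-unrectifiable (only the four special slopes give positive projections, which form a direction-null set), and this property is preserved under translations and countable unions. Hence $\mathcal H^1(\ga\cap A)=0$ for every rectifiable curve $\ga$ in $\R^2$, so with $\ga$ parametrized by arc length one has $\mathcal L^1(\ga^{-1}(A))=0$.

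For \ref{k-PI} and \ref{k-isom}, this permits an upper-gradient transfer between $X$ and $\R^2$. Given $u$ with upper gradient $g$ on $X$, I would extend $g$ by zero to $A$ and choose a quasicontinuous extension of $u$. The open-in-measure set $\ga^{-1}(X)$ has full measure in $[0,l_\ga]$, and a decomposition-and-telescoping over its constituent open subintervals, together with continuity at the boundary points in $A$, shows that the extended $g$ is a $1$-weak upper gradient of the extended $u$ on $\R^2$. Since $\mathcal L^2(A)=0$, ball averages on $X$ coincide with those on $\R^2$, so the global $1$-Poincar\'e inequality on $\R^2$ transfers verbatim to $X$, giving \ref{k-PI}. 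The same transfer shows $g_{u,\Om}=g_{u,\Om'}$ a.e.\ in $\Om$ for every $u\in\hNp(\Om')$; combined with removability from \ref{k-PI} via Theorem~\ref{thm-hN1-rem-global}, this yields \ref{k-isom}.

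The heart of the theorem is \ref{k-pathalmostopen}. I plan to exhibit a positive $p$-modulus family $\Ga$ of curves along which the path-almost-open condition provably fails. Fix a closed rectangle $R\subset\Om'$ with one pair of sides parallel to $(1,2)/\sqrt{5}$, and let $\Ga$ be its foliation by maximal slope-$2$ line segments; a Fubini/H\"older estimate yields $\Mod_p(\Ga)>0$. For each $\ga\in\Ga$ on the line $L_c\colon y=2x+c$, parametrized by arc length, I verify two facts. First, $\mathcal L^1(\ga^{-1}(A))=0$: the $x$-coordinates of $L_c\cap(q_j+C)$ lie in $(q_j)_1+H_\infty$, a set of measure zero, and the countable union over $j$ preserves this. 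Second, and crucially, $\ga^{-1}(A)$ is \emph{dense} in $[0,l_\ga]$: a point $(u,2u+c)\in L_c$ lies in $A$ exactly when both $u$ and $2u+c$ lie in $T:=\Q+H_\infty$, and the key algebraic identity is
\[
   2H_\infty-H_\infty=[-1,2].
\]
This is obtained by computing $2H_1-H_1$ directly as a tiling of $[-1,2]$ by four subintervals, and pushing to the limit via compactness in $H_\infty\times H_\infty$ together with the nesting $H_\infty\subset H_k$. Given this, for any $c\in\R$ one can find $h,h'\in H_\infty$ with $2h-h'+c\in\Q$; then for every $q\in\Q$ the point $u=q+h$ satisfies $u\in T$ and $2u+c=2q+(2h+c-h')+h'\in T$. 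Hence the coset $\Q+h$, which is dense in $\R$, is contained in $T\cap((T-c)/2)$, yielding density of $\ga^{-1}(A)$ in $[0,l_\ga]$.

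Combining these two facts, $\ga^{-1}(\Om)=[0,l_\ga]\setm\ga^{-1}(A)$ has full Lebesgue measure yet empty interior; the only open subset is $\emptyset$, and $\ga^{-1}(\Om)\setm\emptyset=\ga^{-1}(\Om)$ has positive measure, so $\ga^{-1}(\Om)$ is not a union of an open set and a Lebesgue null set for any $\ga\in\Ga$. Since $\Mod_p(\Ga)>0$, $\Om$ is not $p$-path almost open in $\R^2$, proving \ref{k-pathalmostopen}. The principal obstacle is the identity $2H_\infty-H_\infty=[-1,2]$, which is indispensable: density of $T$ alone does not imply density of $T\cap((T-c)/2)$, and the proof must blend an explicit tiling at level $1$ with the multiplicative self-similar scaling of $H_\infty$. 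A secondary delicate point is the upper-gradient transfer used in \ref{k-isom} and \ref{k-PI}, which requires care in handling the possibly uncountable $A$-boundary of the subintervals constituting $\ga^{-1}(X)$.
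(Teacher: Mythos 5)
Your argument for part~\ref{k-pathalmostopen} is correct and essentially equivalent to the paper's. The paper phrases the key fact geometrically --- any line of slope~$2$ that meets $q_j+[0,4^{-i}]^2$ meets $q_j+4^{-i}C$ --- while you phrase it algebraically as $2H_\infty-H_\infty=[-1,2]$ and then deduce density of $\ga^{-1}(A)$ from an explicit dense coset $\Q+h$. These are the same observation, and your compactness/nesting argument for pushing $2H_1-H_1=[-1,2]$ to the limit is sound. The modulus estimate and the final contradiction with path-almost-openness are the same.

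The serious gap is in your argument for \ref{k-isom} (and hence \ref{k-PI}). You propose to extend $g$ by zero, and then verify the upper gradient inequality by ``decomposition-and-telescoping over the constituent open subintervals'' of $\ga^{-1}(X)$, using ``continuity at the boundary points in $A$''. But $\ga^{-1}(X)$ need not contain any nonempty open subinterval: for the positive-modulus family of slope-$2$ segments that you yourself construct in \ref{k-pathalmostopen}, the preimage $\ga^{-1}(A)$ is \emph{dense} in $[0,l_\ga]$, so $\ga^{-1}(X)$ has empty interior. There are then no constituent open subintervals over which to telescope, and this failure occurs on a curve family of positive \p-modulus, so it cannot be discarded as an exceptional family. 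Your argument for \ref{k-isom} implicitly requires $\Om$ to be (close to) \p-path open in $\R^2$, which part~\ref{k-pathalmostopen} says is false; the two parts of your proposal are in direct tension. The paper avoids this entirely by going through the $\ACL$ characterization of $W^{1,p}$: since the projection of $C$ (and hence $A$) has zero length in every direction except the four slopes $\pm\tfrac12,\pm2$, for each non-exceptional direction \emph{almost every} perpendicular line misses $A$ entirely, so $u$ is absolutely continuous along those lines with $|u'_d|\le g_{u,\Om}$. This gives $u\in\ACL(\Om')=W^{1,p}(\Om')$ and $|\nabla u|\le g_{u,\Om}$ a.e., hence $g_{u,\Om'}\le g_{u,\Om}$, with no curve-by-curve telescoping needed. (Your appeal to pure $1$-unrectifiability would also give zero projections in a.e.\ direction, which is enough for this $\ACL$ route; but you would need to switch to that argument rather than the telescoping one.) Part~\ref{k-PI} then follows from \ref{k-isom} via Theorem~\ref{thm-hN1-rem-global}, as you say, but the isometry you need must come from the $\ACL$ argument, not from a curvewise transfer.
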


This in particular shows that the assumptions and
conclusions in 
Theorem~\ref{thm-intro},
as well as in the corresponding  Theorem~4.1
in Bj\"orn--Bj\"orn~\cite{BBnoncomp} for $1<p<\infty$,
can be fulfilled even if
$\Om$
is not \p-path almost open in $\Xhat$.
Similarly, it shows that the assumptions in Theorem~\ref{thm-hN1-rem-global}
can be fulfilled without $X$
being \p-path almost open in $Y=\Xhat$,
and that even in this case it is possible 
that \ref{F-None}--\ref{F-X}
all hold.
Moreover, the conclusions in Theorem~\ref{thm-A-intro} hold.

\begin{proof}
  \ref{k-isom}
Let  $u\in \Np(\Om)$.
Then $u$ is absolutely continuous on \p-almost every curve
in $\Om$, by Proposition~3.1 in Shan\-mu\-ga\-lin\-gam~\cite{Sh-rev}
(or \cite[Theorem~1.56]{BBbook}).
Let  $l$ be any line which is \emph{not} among those 
in \eqref{eq-lines}.
The orthogonal
projection of $C$,  and thus of $A$, on $l$ has zero length.
Hence almost every line in $\R^2$, which is perpendicular to $l$,
does not intersect $A$. Thus,
by~\cite[Lemmas~2.14 and~A.1]{BBbook}, 
$u$ is absolutely continuous along the intersection of almost
  every such line with $\Om'$ and the corresponding directional
derivative $u'_d$ of $u$ satisfies $|u'_d| \le g_{u,\Om}$ a.e.
(Note that
$\mathcal L^2(\Om'\setm\Om)=0$.)

In particular, $u \in \ACL(\Om')$ and thus
$u \in W^{1,p}(\Om') = \hNp(\Om')/{\simae}$, by
e.g.\ Theorem~2.1.4 in Ziemer~\cite{Ziemer}.
Since we have only excluded four directions of lines for $l$,
the distributional gradient of $u$ satisfies
$|\nabla u| \le g_{u,\Om}$ a.e.\ in $\Om'$.
Thus,
\[
g_{u,\Om'}=|\nabla u| \le 
g_{u,\Om}
\quad \text{a.e.\ in $\Om'$,}
\] 
by Theorem~7.13 in Haj\l asz~\cite{Haj03} (or \cite[Corollary~A.4]{BBbook}),
while
the reverse inequality is trivial.
Hence $A$ is isometrically removable for $\hNp(\Om)$.

\ref{k-PI}  
This now follows directly from \ref{k-isom} and Theorem~\ref{thm-hN1-rem-global}.

\ref{k-pathalmostopen}
	Consider the family $\Gamma_0$ of all lines
	$\{\gamma(t):=(t/\sqrt{5},(2t+c)/\sqrt{5}) : t\in\R\}_{c\in\R}$.
The crucial property of the Cantor set $C$ is that if
any such line intersects $[0,4^{-i}]\times [0,4^{-i}]$, then it intersects
$4^{-i}C \subset C$, $i=0,1,\ldots$\,, though only in a set of zero $1$-dimensional
        Lebesgue measure.
	Thus if any line $\gamma\in \Gamma_0$ intersects
	$q_j+[0,4^{-i}]\times [0,4^{-i}]$ for some indices $i,j$,
	then it intersects $q_j+4^{-i} C$.
	
	Fix $\gamma\in\Gamma_0$, $t\in \R$ and $\eps>0$.
	We then find $i,j$ such that $4^{-i}<\eps/2$
	and $\gamma(t)\in q_j+[0,4^{-i}]\times [0,4^{-i}]$.
	As explained above, the line $\gamma$ intersects $q_j+4^{-i} C$
	and so there is $s\in \R$ with $|s-t|<\eps$ such that
	$\gamma(s)\in q_j+4^{-i} C\subset A$.
	It follows that
	$\gamma^{-1}(A)$ is dense in $\R$ but of zero $1$-dimensional
        Lebesgue measure.
	The lines $\gamma\in\Gamma_0$ are not rectifiable curves
	since they are not of finite length, but
	we can define $\Gamma$ as the collection of all
	compact line segments on these lines
        that also belong to $\Om'$.
        Let $\ga \colon [0,l_{\gamma}] \to \Om'$,
        $\ga \in \Gamma$,
          be an arc-length parameterized curve.
        Then by the above argument,
	$\gamma^{-1}(A)$ is dense in
	$[0,l_{\gamma}]$ but of zero $1$-dimensional Lebesgue measure,
	and so $\gamma^{-1}(\Om)=[0,l_{\gamma}] \setm \gamma^{-1}(A)$
        is not the union of an
        open set and a set of
	zero $1$-dimensional Lebesgue measure.
	By \cite[Lemma~A.1]{BBbook}, we also have $\Mod_{p,\R^2}(\Ga)>0$
	for all $1\le p<\infty$.
	In conclusion, $\Om$	is not \p-path almost open in $\R^2$.
\end{proof}

In the rest of this section, we
provide examples of removable sets $E$ fulfilling the 
assumptions in Theorem~\ref{thm-hN1-rem-global},
with $X=Y\setm E$ that is \p-path almost open but not \p-path open.

\begin{example} \label{ex-bow-tie-p>2}
Let $p > 2$ and let $Y$ be the so-called bow-tie
\begin{align*}
   Y &=\{(x_1,x_2) \in \R^2 : x_1 x_2 \ge 0\}, \\
   E & =\{(x_1,x_2) \in \R^2 : x_1=0 \text{ or } x_2 =0\} \setm \{(0,0)\}, \\
   X & = Y \setm E.
\end{align*}
We equip $Y$ with the Lebesgue measure, which is globally doubling on $Y$.
Then $Y$ supports a global \p-Poincar\'e inequality, by
\cite[Example~A.23]{BBbook}.
The same proof also shows that $X$ supports a global \p-Poincar\'e inequality.
By Theorem~\ref{thm-path-almost-open-char-intro}, $X$ is \p-path almost
open in $Y$.
Thus, by Theorem~\ref{thm-hN1-rem-global}, $E$ is isometrically removable
for $\hNp(X)$.
Note that the closure $\clE$
(taken in $Y$ or, equivalently, $\R^2$)
separates $Y$ and thus is not removable for
  $\hNp(Y \setm \clE)$.

Since $p>2$
it is well known that $\CpRtwo(\{x\}) =\CpRtwo(\{0\}) >0$ for $x \in \R^2$.
It is not difficult to see that
$\CpY(\{x\}) \ge \tfrac14 \CpRtwo(\{x\})$
for $x \in Y$.
Thus, by definition,
every \p-quasiopen set in $Y$ is open.
By Theorem~\ref{thm-BBM-gen} every \p-path open set in $Y$ is open,
and in particular $X$ is not \p-path open.
\end{example}  

By adding a weight, we now modify the previous example to 
cover all $p\ge1$.

\begin{example}
  Let $Y$, $E$ and $X$ be as in Example~\ref{ex-bow-tie-p>2},
    but this time we
equip $Y$ with the measure $d\mu=w\,dx$, where $w(x)=|x|^{-1}$, which is globally doubling on $Y$.
Then $Y$ supports a global $1$-Poincar\'e inequality, by
\cite[Example~A.24]{BBbook}.
The same proof also shows that $X$ supports a global $1$-Poincar\'e inequality.
By  Theorem~\ref{thm-path-almost-open-char-intro}, $X$ is \p-path almost
open in $Y$ for every $p \ge 1$.
Thus, by Theorem~\ref{thm-hN1-rem-global}, $E$ is isometrically removable
for $\hNp(X)$.
Note that $\clE$ separates $Y$ and thus is not removable for
$\hNp(Y \setm \clE)$.

We shall see that $X$ is not \p-path open in $Y$ for any $p \ge 1$.
This will be done by showing that
$E$ is not $1$-thin at $0=(0,0)$ and hence that $X$ is not $1$-finely open.
Since $\ConeY(\{0\})>0$, by \cite[Example~A.24 and Lemma~6.15]{BBbook}, it 
then follows from Theorem~\ref{thm-BBM-gen} that $X$ is not $1$-path open
in $Y$.

Let $u$ be a function admissible for $\coneY(E \cap B(0,b),B(0,2b))$
and let $g$ be an upper gradient of $u$.
Then
for each $0<a<b$,
\[
   \int_0^{2b} g(a,t) w(a,t)\,dt
   \ge \frac{1}{2\sqrt{2} b} \int_0^{2b} g(a,t) \,dt
   \ge \frac{1}{2\sqrt{2} b},
\]
since $g$ is an upper gradient, $u(a,0)=1$ and $u(a,2b)=0$.
It follows that
\[
   \int_{B(0,2b)} g w\,dx
   \ge \iintlim{0}{b} \int_0^{2b} g(a,t) w(a,t)\,dt\,da
   \ge b \frac{1}{2\sqrt{2} b}
   =  \frac{1}{2\sqrt{2}}.
\]
Hence, by taking infimum over all such $u$ and $g$, we see that
\[
  \coneY(E \cap B(0,b),B(0,2b)) \ge \frac{1}{2\sqrt{2}}.
\]
Testing with $u(x)=\min\{(2b-|x|)/b,1\}_\limplus$ shows
that
\[
  \coneY(B(0,b),B(0,2b)) \le \pi,
\]
 and so
  $E$ is not $1$-thin at $0$ by the definition~\eqref{deff-thin-p1},
  and thus $X$ is not $1$-path open in $Y$.
\end{example}

With a bit more work we can create similar examples of removable sets 
$E$ with non-\p-path open complements in unweighted $\R^n$. 
Moreover, it can be done so that any $E'\supset E$ with \p-path open complement
is not removable.

We start with the following result.
As in Example~\ref{ex-Rn-countable} this gives a lot of examples
of removable sets whose closure is not removable.

\begin{prop} \label{prop-Hnone}
Let $\Om \subset \R^n$, $n \ge 2$, be open and equipped with the Lebesgue measure $\Leb^{n}$.
Let  $E \subset \Om$ be a set with $(n-1)$-dimensional
Hausdorff measure $\Hnone(E)=0$.
Then $E$ is isometrically removable for $\hNp(\Om \setm E)$
for every $p \ge 1$.
\end{prop}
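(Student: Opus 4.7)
The plan is to imitate the proof of Theorem~\ref{thm-A}\ref{k-isom}. The role of the specific projection-vanishing property of the four-corner Cantor set is now played by the following trivial consequence of $\Hnone(E)=0$: for every direction $e\in S^{n-1}$, the orthogonal projection $\pi_{e^\perp}\colon\R^n\to e^\perp$ is $1$-Lipschitz, so $\Hnone(\pi_{e^\perp}(E))\le\Hnone(E)=0$, and on the hyperplane $e^\perp$ the measure $\Hnone$ coincides with $\Leb^{n-1}$. Consequently, for $\Leb^{n-1}$-almost every $y\in e^\perp$, the line $l_y=y+\R e$ misses $E$ entirely, and in particular $l_y\cap\Om\subset \Om\setm E$.

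Given $u\in\Np(\Om\setm E)$ with minimal \p-weak upper gradient $g:=g_{u,\Om\setm E}$, I would extend $u$ arbitrarily (e.g.\ by zero on $E$) to obtain a function $\uhat$ on $\Om$; since $\Hnone(E)=0$ forces $\Leb^n(E)=0$, any such extension represents the same a.e.-equivalence class. By Shanmugalingam~\cite[Proposition~3.1]{Sh-rev} (or \cite[Theorem~1.56]{BBbook}), $u$ is absolutely continuous on \p-almost every curve in $\Om\setm E$. Fixing a coordinate direction and combining the two facts above via \cite[Lemmas~2.14 and~A.1]{BBbook} in a Fubini-type argument, $\uhat$ is absolutely continuous along $\Leb^{n-1}$-a.e.\ line in $\Om$ parallel to that direction, with directional derivative bounded a.e.\ by $g$ extended by zero to $E$. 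Running this for each of the $n$ coordinate directions yields $\uhat\in\ACL(\Om)$ with weak partial derivatives dominated a.e.\ by~$g$, hence lying in $L^p(\Om)$.

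By Ziemer~\cite[Theorem~2.1.4]{Ziemer}, $\uhat\in W^{1,p}(\Om)$ with $|\nabla\uhat|\le g$ a.e.\ in $\Om$. The identification $W^{1,p}(\Om)=\hNp(\Om)/{\simae}$, due to Haj\l asz~\cite[Theorem~7.13]{Haj03} (or \cite[Corollary~A.4]{BBbook}), which is valid for all $p\ge 1$, then gives $\uhat\in\hNp(\Om)$ with $g_{\uhat,\Om}=|\nabla\uhat|\le g$ a.e.\ in $\Om\setm E$, while the reverse inequality $g\le g_{\uhat,\Om}|_{\Om\setm E}$ is immediate since a restriction of an upper gradient is an upper gradient. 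This yields isometric removability of $E$ for $\hNp(\Om\setm E)$. No serious obstacle is anticipated beyond verifying that the cited ACL characterization and $W^{1,p}$-identification apply uniformly in $p\ge 1$ and on a general open $\Om\subset\R^n$, exactly as in the proof of Theorem~\ref{thm-A}\ref{k-isom}.
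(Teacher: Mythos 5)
Your proposal is correct and follows essentially the same route as the paper's proof, which simply refers back to the proof of Theorem~\ref{thm-A}\,\ref{k-isom} and notes that now there are no exceptional directions. The key observation you make --- that $\Hnone(E)=0$ forces every orthogonal projection of $E$ onto a hyperplane to be $\Leb^{n-1}$-null, so that a.e.\ line in every direction misses $E$ --- is precisely the replacement for the four-corner Cantor set's special projection property, and the remaining steps (absolute continuity on \p-a.e.\ curve via Shanmugalingam, the Fubini argument via \cite[Lemmas~2.14 and~A.1]{BBbook}, the ACL characterization from Ziemer, and the identification $W^{1,p}(\Om)=\hNp(\Om)/{\simae}$ from Haj\l asz, all valid for $p\ge1$ on unweighted $\R^n$) match the paper's.
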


\begin{proof}
The proof is essentially identical to the proof of 
Theorem~\ref{thm-A}\,\ref{k-isom}.
However, this time we do not have any exceptional directions
as given by \eqref{eq-lines}.
\end{proof}  

\begin{example} \label{ex-hyperplane-1}
  Let $Y=\R^n$, $n \ge 3$, equipped with the Lebesgue measure $\Leb^n$,
  and $A \subset [0,1]$ be a nonempty set 
of zero 1-dimensional Lebesgue measure.
Let $\{q_j\}_{j=1}^{\infty}$ be an enumeration of $\Q$
and let $X = \R^n \setm E$, where
\begin{equation*}    \label{eq-def-E0}
E= \biggl( \bigcup_{i,j=1}^{\infty}(q_j+2^{-i} A) \biggr) \times
\R^{n-2} \times \{0\}.
\end{equation*}
Let $p>2-d$, where $0\le d\le 1$ is the Hausdorff dimension of $A$.
Note  that all $p>1$ are included when $\dim_H A=1$
  and $\Leb^1(A)=0$.
It follows from Proposition~\ref{prop-Hnone} that $E$ is removable
for $\hNp(X)$.
As $E$ is contained in the hyperplane $H:=\R^{n-1}\times\{0\}$, $X$ is a union
of the open set
$\R^n \setm H$ and a set of measure zero,
and thus \p-path almost open in $\R^n$,
by Theorem~\ref{thm-path-almost-open-char-intro}.
We shall now show that $X$ is not \p-path open in $\R^n$.

By Theorem~\ref{thm-BBM-gen}, this amounts to showing that 
$\CpRn(X \setm \fineint X)>0$,
where $\fineint X$ denotes the \p-fine interior of $X$, which consists
of all points $x\in X$ for which
\begin{equation}   \label{eq-Wien-ex}
  \sum_{i=0}^\infty \biggl(\frac{\cpRn(B(x,2^{-i}) \cap E,B(x,2^{1-i}))}
           {\cpRn(B(x,2^{-i}),B(x,2^{1-i}))}\biggr)^{1/(p-1)}  < \infty,
\end{equation}
see Mal\'y--Ziemer~\cite[Theorem~2.136]{MZ}.
We alert the reader that it is not enough to show that
$\CpRn(X \setm \interior X)>0$, since e.g.\ the complement of 
any countable dense set in $\R^n$, $n\ge p$, is \p-path open but 
has empty interior.

By Heinonen--Kilpel\"ainen--Martio~\cite[Lemma~12.10]{HeKiMa},
\eqref{eq-Wien-ex} is equivalent to the \p-thinness condition~\eqref{deff-thin}.
It is clear that \eqref{eq-Wien-ex} holds  for all $x\in X\setm H$.
For $x=(x_1,\ldots,x_n) \in X \cap H$ and $r=2^{1-i}$, $i=1,2,\ldots$\,,
find 
\[
y=(y_1,\ldots,y_n)\in H \quad \text{with} \quad y_1 \in \Q 
      \text{ and }|x-y|<\tfrac{1}{2}r = 2^{-i}.
\]
Let 
\[
A_i:= 2^{-i}A \times \R^{n-2} \times \{0\}, \quad i=0,1,\ldots.
\]
Then, by the scaling property and translation invariance of $\cpRn$
together with the construction of $E$,
\begin{align*}
\cpRn(B(x,r) \cap E,B(x,2r))
&\ge \cpRn(B(y,\tfrac12r ) \cap (y+ A_i),
B(y,\tfrac52r)) \\
&=  2^{-i(n-p)} \cpRn(A_0\cap B(0,1), B(0,5)) \\
&=: C_0 r^{n-p}.
\end{align*}
Since $A_0$ is $(d+n-2)$-dimensional and $p>2-d$, it follows from
e.g.\
Heinonen--Kilpel\"ainen--Martio~\cite[Theorem~2.26]{HeKiMa}
that $C_0>0$.
It is crucial here that $C_0$, by its definition above,
	only depends on the set $A$ fixed at the beginning,
	and not on the ball $B(x,r)$.
Similarly,
\[
  \cpRn(B(x,r),B(x,2r))=Cr^{n-p}
\quad \text{for some } C >0.
\]
Hence for all $r=2^{1-i}$, $i=1,2,\ldots$\,,
\[
\biggl(\frac{\cpRn(B(x,r) \cap E,B(x,2r))}
      {\cpRn(B(x,r),B(x,2r)} \biggr)^{1/(p-1)}\ge\frac{C_0}{C} >0,
\]
and inserting this into the Wiener criterion~\eqref{eq-Wien-ex}
shows that $x\notin \fineint X$, and hence $\fineint X=\R^n \setm H$.
Moreover, $H\setm E$ has infinite $(n-1)$-dimensional Hausdorff
measure and thus by e.g.\ \cite[Theorem~2.26]{HeKiMa} again,
\[
 \CpRn(X \setm \fineint X)=\CpRn(H\setm E)>0,
\]
i.e.\ $X$ is not \p-path open, by Theorem~\ref{thm-BBM-gen}.

It also follows that if $E' \supset E$ is any set
such that $X'=\R^n \setm E'$ is \p-path open (and
$\Leb^n(E')=0$),
then
$\CpRn(H \cap X')=0$. 
Since $H\cap X'$ separates $X'$, it follows that 
$X'$ cannot support
  a \p-Poincar\'e inequality and thus $E'$ is not removable
for $\hNp(X')$,
by Theorem~\ref{thm-hN1-rem-global}.
  Thus the removability of $E$ cannot be achieved by considering
  larger sets with \p-path open complements.
\end{example}

\section{\texorpdfstring{\p}{p}-path almost open sets}
\label{sec:p-path-almost-open}

Despite
the example given in Theorem~\ref{thm-A},
\p-path almost open sets
played a rather central
role in our studies of removable sets in Section~\ref{sect-rem}.
In this section, we therefore characterize \p-path almost open sets,
and in particular answer Open problem~3.4 in Bj\"orn--Bj\"orn~\cite{BBnonopen},
which asked whether every \p-path almost open set can be written as 
a union of a \p-path open set and a set of a measure zero.
We give an affirmative answer
for measurable sets, under natural assumptions.
At the same time, we also answer it in the negative for
nonmeasurable sets in unweighted $\R^n$, $n \ge 2$,
and give a measurable counterexample with a nondoubling underlying measure 
on $\R$.

We call a set $N\subset X$ \emph{\p-path negligible} if
for \p-almost every arc-length parameterized curve
$\gamma$ we have $\Leb^1(\gamma^{-1}(N))=0$, 
where $\Leb^1$ denotes the $1$-dimensional Lebesgue measure.
(Recall that we only consider rectifiable curves.)
A \p-path negligible set is obviously \p-path almost open.

It is easy to check that a set of measure zero is
\p-path negligible, see
Shan\-mu\-ga\-lin\-gam~\cite[Proof of Lemma~3.2]{Sh-rev}
  (or \cite[Lemma~1.42]{BBbook}).
Conversely, we have the following result.

\begin{prop}\label{prop-path-negligible}
  Assume that $\mu$ is locally doubling and supports a
 local \p-Poincar\'e inequality.
  Let $N\subset X$ be measurable and \p-path negligible.
  Then $\mu(N)=0$.
\end{prop}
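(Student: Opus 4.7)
The plan is to reduce to a compact connected globally doubling $p$-Poincar\'e space via Theorem~\ref{thm-rajala-iter}, and then apply the Poincar\'e inequality to the distance function from a Lebesgue density point of $N$ to produce a contradiction with $p$-path negligibility.

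First, I would invoke Theorem~\ref{thm-rajala-iter} to cover $X$ by bounded uniform domains $\{G_j\}_{j=1}^\infty$ whose closures $\clGj$ are compact and on which $\mu$ is globally doubling and supports a global \p-Poincar\'e inequality. Since \eqref{eq-compare-Mod} shows that $\Modp$ is unchanged by restriction to a measurable subset, the set $N\cap G_j$ remains \p-path negligible in $\clGj$. It thus suffices to prove $\mu(N\cap G_j)=0$ for each $j$, so we may assume henceforth that $X=\clGj$ is a compact connected globally doubling \p-Poincar\'e space.

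The key observation is that for every $u\in \Np(X)$, the \p-path negligibility of $N$ forces the minimal \p-weak upper gradient $g_u$ to vanish $\mu$-a.e.\ on $N$. Indeed, for \p-a.e.\ curve $\gamma$ one has $\Leb^1(\gamma^{-1}(N))=0$, hence $\int_\gamma g_u\chi_N\,ds=0$, so
\[
|u(\gamma(l_\gamma))-u(\gamma(0))|\le \int_\gamma g_u\,ds = \int_\gamma g_u\chi_{X\setminus N}\,ds.
\]
This makes $g_u\chi_{X\setminus N}$ another \p-weak upper gradient of $u$, and minimality of $g_u$ forces $g_u=0$ a.e.\ on $N$.

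Suppose, for contradiction, that $\mu(N)>0$. By the doubling property, Lebesgue differentiation supplies a density point $x_0\in N$. Take $u(x)=d(x,x_0)$, which is $1$-Lipschitz, so $g_u\le 1$ a.e., and by the preceding step $g_u\le \chi_{X\setminus N}$ a.e. Applying the global \p-Poincar\'e inequality on $B_r=B(x_0,r)$ then yields
\[
\vint_{B_r}|u-u_{B_r}|\,d\mu
\le Cr\biggl(\vint_{\lambda B_r} g_u^p\,d\mu\biggr)^{1/p}
\le Cr\biggl(\frac{\mu(\lambda B_r\setminus N)}{\mu(\lambda B_r)}\biggr)^{1/p}=o(r)
\]
as $r\to 0$, using density of $N$ at $x_0$ together with $\mu(\lambda B_r)\sim \mu(B_r)$ from doubling.

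The main obstacle is the matching lower bound $\vint_{B_r}|u-u_{B_r}|\,d\mu\ge cr$ for all sufficiently small $r$ and some $c>0$ independent of $r$. I would obtain this from the reverse doubling property of $\clGj$, which follows from its connectedness together with doubling: there is $c'>0$ with $\mu(B(x_0,r)\setminus B(x_0,r/2))\ge c'\mu(B(x_0,r))$ for small $r$. Since $u=d(\cdot,x_0)\le r/2$ on the inner ball $B(x_0,r/2)$ and $u\ge r/2$ on the outer annulus, a short case split on whether $u_{B_r}$ lies above or below $r/2$ produces a fixed fraction of $B_r$ on which $|u-u_{B_r}|\ge cr$. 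Comparing with the $o(r)$ upper bound yields the contradiction and hence $\mu(N)=0$.
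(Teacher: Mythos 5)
Your core idea closely parallels the paper's proof: you establish the key observation that $g_u=0$ a.e.\ on $N$ for $u\in\Np(X)$ by the same minimality argument, you pick a density point of $N$ via Lebesgue differentiation, and you contradict the Poincar\'e inequality by pairing the density estimate on the right-hand side with a reverse-doubling-type lower bound on the left. The paper uses the cut-off functions $\eta_i(y)=(1-i\dist(y,B_i))_\limplus$ rather than $u=d(\cdot,x_0)$, but that is a cosmetic difference and both produce the same two-sided contradiction.

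There is, however, a genuine gap in your reduction step. Proposition~\ref{prop-path-negligible} assumes only that $\mu$ is locally doubling and supports a local \p-Poincar\'e inequality; it does \emph{not} assume that $X$ is locally compact. Theorem~\ref{thm-rajala-iter} requires the closure $\itoverline{\La B}_1$ to be compact, so invoking it silently strengthens the hypotheses, and the resulting proof would only establish the proposition for locally compact $X$. (This matters to the paper: Proposition~\ref{prop-path-negligible} is deliberately stated without local compactness, while the nearby Theorems~\ref{thm-BBM-gen} and~\ref{thm-p-path-almost-open-char}, which do use Rajala's approximation, explicitly include that hypothesis.) The reduction is also unnecessary. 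Everything you do afterwards---the vanishing of $g_u$ on $N$, the choice of a density point, the Poincar\'e estimate, and the lower bound via a nonempty sphere at a suitable small radius (which already follows from the local Poincar\'e inequality, since it implies local connectedness)---can be carried out directly using the local doubling and local \p-Poincar\'e inequality inside a single small ball around the density point, exactly as the paper does with $\eta_i$ and the balls $B_i,2B_i,3B_i$. You would only need to replace $d(\cdot,x_0)$ by a truncated version so that it lies in $\Np$ without assuming $\mu(X)<\infty$. One further small inaccuracy: \eqref{eq-compare-Mod} compares modulus on $\Om\cap X$ with modulus on $\Om\subset\Xhat$; it is not literally the statement that $\Mod_p$ is unchanged under restriction to a measurable subset, although the monotonicity of modulus under passing to a subspace of the ambient measure space is elementary and does give what you want.
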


Proposition~\ref{prop-exist-nonmeas-path-almost-open-set}
  below shows that the measurability assumption cannot be dropped.

\begin{proof}
  First we make the following observation: if $u\in N^{1,p}(X)$,
 then the minimal \p-weak upper gradient satisfies
  $g_u=0$ a.e.\ in $N$. To see this, note that
  for \p-almost every curve $\gamma$, we have $\Leb^1(\gamma^{-1}(N))=0$
  and so
  \[
  \int_{\gamma}g_u\,ds=\int_{\gamma}g_u\chi_{X\setminus N}\,ds.
  \]
  Thus $g_u\chi_{X\setminus N}$ is also a \p-weak upper gradient of $u$,
  and then by the minimality of $g_u$,
  we must have $g_u=0$ a.e.\ in $N$.
  
  In order to prove that $\mu(N)=0$, suppose instead that $\mu(N)>0$.
  Then there exists a point $x\in N$ of density one,
  see e.g.\ Heinonen~\cite[Theorem~1.8]{heinonen}.
  For each $i=1,2\ldots$\,, let $B_i=B(x,i^{-1})$ and
  $\eta_i(y)=(1-i\dist(y,B_i))_\limplus$.
Then
  $g_{\eta_i}\le  i \chi_{2B_i}$
  and in fact $g_{\eta_i}\le  i\chi_{2B_i\setminus N}$, by the earlier observation.
  
  By the local \p-Poincar\'e inequality,
  we have for all sufficiently large $i$ that the sphere
    $\bdy\tfrac52 B_i$ is nonempty and
\begin{equation} \label{eq-int-3Bi}
  \vint_{3B_i}
      |\eta_i - c_i| \,d\mu
      \le \frac{C}{i} \biggl( \vint_{3B_i}
      g_{\eta_i}^{p}
       \,d\mu \biggr)^{1/p},
\end{equation}
where $c_i:= \vint_{3B_i} \eta_i \,d\mu$ is the integral average.
Considering the cases $c_i\le\tfrac12$ and $c_i\ge\tfrac12$ separately,
we conclude that
the left-hand side satisfies
\[
  \vint_{3B_i}
  |\eta_i - c_i| \,d\mu
\ge \frac{\min\{\mu(B_i),\mu(3B_i\setm 2B_i)\}}{2\mu(3B_i)}
\ge \frac{1}{C'},
\]
by the local doubling property (and for large $i$).
On the other hand, the right-hand side satisfies
\[
  \frac{1}{i} \biggl( \vint_{3B_i} g_{\eta_i}^{p} \,d\mu \biggr)^{1/p}
  \le  \biggl( \frac{\mu(3B_i\setminus N)}{\mu(3B_i)} \biggr)^{1/p},
\]
which
tends
  to zero as $i\to\infty$, since $x$ is a density point of $N$.
  This contradicts
\eqref{eq-int-3Bi},
and so we have the result.
\end{proof}

Next we prove the following characterization of \p-path almost open sets.
Note that it applies also to nonmeasurable sets.

\begin{thm}        \label{thm-path-almost-open-char}
Assume that 
$X$ is locally compact and that $\mu$ is locally doubling
and supports a local \p-Poincar\'e inequality.
	Then $U \subset X$ is \p-path almost open  if and only if
	it can be written as a
        union
	$U=V\cup N$, where $V$ is \p-path open and $N$ is \p-path negligible.
\end{thm}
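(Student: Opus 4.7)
The direction $(\Leftarrow)$ is immediate from the definitions: given $U=V\cup N$ with $V$ \p-path open and $N$ \p-path negligible, for \p-almost every arc-length parameterized curve $\gamma\colon [0,l_\gamma]\to X$, the set $\gamma^{-1}(V)$ is relatively open in $[0,l_\gamma]$ and $\Leb^1(\gamma^{-1}(N))=0$, so $\gamma^{-1}(U)=\gamma^{-1}(V)\cup\gamma^{-1}(N)$ is the union of an open set and a set of zero $1$-dimensional Lebesgue measure.

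For the direction $(\Rightarrow)$, assume $U$ is \p-path almost open. I would first dispatch the easy sub-case where $U$ is $\mu$-measurable: the equivalence \ref{h-1}$\Leftrightarrow$\ref{h-2} in Theorem~\ref{thm-path-almost-open-char-intro} gives $U=V\cup N$ with $V$ \p-path open and $\mu(N)=0$, and Shan\-mu\-ga\-lin\-gam's lemma (cf.~\cite[Lemma~1.42]{BBbook}) upgrades $\mu(N)=0$ to \p-path negligibility.

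For a possibly non-$\mu$-measurable $U$ the plan is to reduce to the measurable case via a Borel hull. Since $\mu$ is $\sigma$-finite and Borel regular, I would pick a Borel set $\tilde U\supset U$ with $\mu(\tilde U)=\mu^*(U)$, and first establish that $\tilde U$ is itself \p-path almost open and that $\tilde U\setminus U$ is \p-path negligible. Theorem~\ref{thm-path-almost-open-char-intro}, specifically \ref{h-1}$\Leftrightarrow$\ref{h-4}, then writes $\tilde U=W\cup N_1$ with $W$ \p-finely open and $\mu(N_1)=0$. Next I would descend from $\tilde U$ to $U$ by modifying $W$: take a Borel set $M$ of $\mu$-measure zero containing $N_1$ together with a Borel cover of the \p-path negligible set $\tilde U\setminus U$ (the latter being possible once the first step is in hand, since the relevant ``discrepancy'' lives in a curve-wise null set). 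Setting $W':=W\setminus M$, the set $W'$ remains \p-finely open because the capacity $\cpX$, and hence the \p-thinness condition \eqref{deff-thin}--\eqref{deff-thin-p1} defining \p-fine openness, is unchanged by $\mu$-null modifications; and by construction $W'\subset U$. A final application of Theorem~\ref{thm-path-almost-open-char-intro} \ref{h-4}$\Rightarrow$\ref{h-2} to the measurable \p-finely open set $W'$ produces a \p-path open $V\subset W'\subset U$ with $\mu$-null remainder $N_2$. Assembling the pieces, the difference $N:=U\setminus V$ is contained in $N_1\cup M\cup N_2\cup(\tilde U\setminus U)$, which is a union of $\mu$-null measurable sets and the \p-path negligible set $\tilde U\setminus U$, and is therefore itself \p-path negligible, giving the required decomposition $U=V\cup N$.

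The main obstacle will be the first technical claim in the non-measurable reduction: that $\tilde U\setminus U$ is \p-path negligible, equivalently that the Borel hull $\tilde U$ is \p-path almost open. Because \p-path negligibility for non-$\mu$-measurable sets is not in general equivalent to outer-measure zero, this cannot be obtained purely measure-theoretically from the minimality $\mu(\tilde U)=\mu^*(U)$. Instead, one must combine the curve-wise Lebesgue measurability of $\gamma^{-1}(U)$ afforded by the \p-path almost open hypothesis with the minimality of the Borel hull via a Fubini/coarea-type comparison resting on Shan\-mu\-ga\-lin\-gam's lemma for the curve modulus, in order to conclude $\Leb^1(\gamma^{-1}(\tilde U\setminus U))=0$ for \p-almost every~$\gamma$.
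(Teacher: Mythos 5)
Your proposal has two serious problems; the first is a circularity, and the second is that the key technical claim in your non-measurable reduction is actually false, not merely hard to prove.

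\textbf{Circularity.} You invoke Theorem~\ref{thm-path-almost-open-char-intro} (equivalences \ref{h-1}$\Leftrightarrow$\ref{h-2} and \ref{h-1}$\Leftrightarrow$\ref{h-4}) to handle the measurable case and to process the Borel hull $\tilde U$. But Theorem~\ref{thm-path-almost-open-char-intro} is, by the paper's own account, the combination of Theorem~\ref{thm-BBM-gen} (about \p-path \emph{open} sets) and Theorem~\ref{thm-p-path-almost-open-char} (the measurable \p-path \emph{almost} open characterization). The implication you actually need---\ref{h-1}$\Rightarrow$\ref{h-2}---is precisely Theorem~\ref{thm-p-path-almost-open-char}, and its proof in the paper reads ``If $U$ is \p-path almost open, then by Theorem~\ref{thm-path-almost-open-char} \ldots''. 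That is, Theorem~\ref{thm-p-path-almost-open-char} is a \emph{corollary} of the statement you are trying to prove (together with Proposition~\ref{prop-path-negligible}); you cannot use it as an ingredient. This is not a presentational quibble: without the present theorem there is no known route from ``\p-path almost open'' to any of \ref{h-2}--\ref{h-4}, so the entire scaffolding of your argument collapses.

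\textbf{The Borel hull claim is false.} You propose to pick a Borel set $\tilde U\supset U$ with $\mu(\tilde U)=\mu^*(U)$ and to show that $\tilde U\setminus U$ is \p-path negligible. The paper's own Proposition~\ref{prop-exist-nonmeas-path-almost-open-set} supplies a counterexample: under CH there is a dense Sierpi\'nski set $S\subset\R^n$, $n\ge2$, which is \p-path negligible (hence \p-path almost open) yet nonmeasurable and of full outer measure. Any Borel hull $\tilde U$ of $S$ then has positive (indeed full) Lebesgue measure, and $\tilde U\setminus S$ is the complement of a \p-path negligible set inside a set of positive measure; \p-almost every curve spends positive length there, so $\tilde U\setminus S$ is emphatically \emph{not} \p-path negligible. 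No Fubini/coarea refinement will rescue this: outer-measure minimality of the hull simply does not control the curve-wise behaviour of the discrepancy set. Your concluding paragraph senses the danger, but the obstruction is not a missing lemma; the assertion itself is wrong.

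\textbf{What the paper actually does.} The paper's proof is constructive and sidesteps measurability of $U$ entirely. Starting from a Borel $\rho\in L^p(X)$ witnessing $\Mod_p(\Gamma)=0$ for the exceptional curve family $\Gamma$, it forms (for bounded $U$, inside a ball $B$)
\[
u(x)=\min\Bigl\{1,\ \inf_{\gamma}\int_{\gamma}(\rho+\chi_B)\,ds\Bigr\},
\]
the infimum over curves from $x$ to $X\setminus U$; then $u\in N^{1,p}(X)$, $u=0$ on $X\setminus U$, and $V:=\{u>0\}\subset U$ is \p-path open since $u$ is absolutely continuous on \p-a.e.\ curve. The delicate part is showing $N:=U\setminus V$ is \p-path negligible; this is done by a contradiction argument using a ``zigzagging loop'' construction: if some curve $\widehat\gamma$ with $\int_{\widehat\gamma}\rho\,ds<\infty$ spent positive length in $N$, one grafts onto it a countable family of tiny loops through $X\setminus U$, producing a curve in $\Gamma$ with finite $\rho$-integral, contradicting the choice of~$\rho$. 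That construction is the genuinely new content, and nothing in your proposal replicates or replaces it.
\end{document}
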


Recall that under these assumptions a set is \p-path open
if and only if it is \p-quasiopen, by 
Theorem~\ref{thm-BBM-gen}.

\begin{proof}
If $U=V\cup N$, where $V$ is \p-path open and $N$ is \p-path negligible,
then it is easy to see that $U$ is \p-path almost open.
	
Conversely, suppose that $U$ is \p-path almost open.
Now the family $\Gamma$ of curves $\gamma$, for which $\gamma^{-1}(U)$ is not the union
of an open set and a set of zero $1$-dimensional Lebesgue
measure,
has zero \p-modulus,
i.e.\ there is 
a Borel function
$0\le\rho\in L^p(X)$ such that $\int_{\gamma}\rho\,ds=\infty$
for every $\gamma\in\Gamma$, see \cite[Proposition~1.37]{BBbook}.
	
Assume first that $U$ is bounded and let $B$ be a ball containing
a $1$-neighbourhood of $U$.
Define
\[
u(x)=\min\biggl\{1,\inf_{\gamma}\int_{\gamma}(\rho+\chi_B)\,ds\biggr\},\quad x\in X,
\]
where the infimum is taken over all rectifiable curves (including constant curves)
from $x$ to $X\setminus U$.
Then
$u=0$ in $X\setminus U$, and $\rho+\chi_B$ is an upper gradient of $u$,
by Bj\"orn--Bj\"orn--Shanmugalingam~\cite[Lemma~3.1]{BBS5}
(or~\cite[Lemma~5.25]{BBbook}).
By Corollary~1.10 in
J\"arvenp\"a\"a--J\"arvenp\"a\"a--Rogovin--Rogovin--Shan\-mu\-ga\-lin\-gam~\cite{JJRRS}
(or Theorem~\ref{thm-JJRRS-gen}), $u$ is measurable.
As $u$ and $U$ are bounded and $\rho\in L^p(X)$, it
follows that $u\in N^{1,p}(X)$.
	
Let $V=\{x\in U : u(x)>0\}=\{x\in X : u(x)>0\}$ and 
$N=U \setm V$.
Then $V$
is \p-path open, since $u\in N^{1,p}(X)$ 
is (absolutely) continuous on \p-almost every curve in $X$,
by Proposition~3.1 in Shan\-mu\-ga\-lin\-gam~\cite{Sh-rev}
(or \cite[Theorem~1.56]{BBbook}).
It remains to show that $N$ is \p-path negligible.
Assume it is not. Then there necessarily
is an arc-length parameterized  curve $\widehat{\gamma}$ for which
$\Leb^{1}(D)>0$, where $D:=\widehat{\gamma}^{-1}(N)$,
but $\int_{\widehat{\gamma}}\rho\,ds<\infty$.

Let $x\in N$ and $0<\de\le 1$.
As $u(x)=0$, there are arc-length parameterized
curves $\gamma_j\colon [0,l_{\ga_j}] \to X$, $j=1,2,\ldots$\,,
such that $\gamma_j(0)=x$,
$\gamma_j(l_{\gamma_j})\in X\setminus U$
and
\[
\int_{\gamma_j}(\rho+\chi_B)\,ds\le 2^{-j-1}\de.
\]
Since $B$ contains a $1$-neighbourhood of $U$, necessarily
$l_{\gamma_j}\le 2^{-j-1}\de$.
We define a curve $\gamma_x$ as follows.
Let $L_0=0$ and for $i=1,2\ldots$\,,
\[
L_i=2\sum_{j=1}^i l_{\gamma_j}\le  2\sum_{j=1}^{\infty} l_{\gamma_j}
=: L\le 2\de\sum_{j=1}^{\infty} 2^{-j-1}=\de,
\]
and
\[
\gamma_x=
	\begin{cases}
	\gamma_j(t-L_{j-1}) &\textrm{for }L_{j-1}\le t\le L_{j-1}+l_{\gamma_j},\\
	\gamma_j(L_{j}-t) &\textrm{for }L_{j-1}+l_{\gamma_j}\le t\le L_{j},\\
	\end{cases}
\]
and $\gamma_x(L):=x$.
Then $\gamma_x\colon [0,L]\to X$
is an arc-length parameterized  curve with
$\gamma_x(0)=x=\gamma_x(L_j)=\gamma_x(L)$
and $\gamma_x(L_j+l_{\gamma_{j+1}})\in X\setminus U$
for all $j=1,2\ldots$\,, with $L_j+l_{\gamma_{j+1}}\to L$ as $j\to\infty$.
Also, $\length(\ga_x)= L\le\de$ and $\int_{\gamma_x}\rho\,ds\le \de$.
In essence, $\gamma_x$ is a short ``zigzagging loop'' at $x$
which intersects $X\setminus U$ arbitrarily close to its end point.
	
Now take a dense set $\{s_k\}_{k=1}^{\infty}\subset D$. 
For every $k=1,2,\ldots$\,, we find such a zigzagging loop
$\gah_k:=\ga_{x_k}$ at $x_k=\widehat{\gamma}(s_k)$, with
$l_{\gah_k}\le 2^{-k}$ and $\int_{\gah_k}\rho\,ds\le 2^{-k}$.
Next we define a curve $\gamma$ that is obtained
from $\widehat{\gamma}$ by adding the ``loops'' $\gah_k$ at the points
$x_k$, for $k=1,2,\ldots$\,.
More precisely, first let $l=\sum_{k=1}^{\infty}l_{\gah_k}$.
Then define the function
\[
f\colon [0,l_{\widehat{\gamma}}]\to [0,l_{\widehat{\gamma}}+l],\quad
	f(t):=\nu([0,t])\ \ \textrm{with}\ \
	\nu=\Leb^1+\sum_{k=1}^{\infty}l_{\gah_k}\delta_{s_k},
\]
where $\delta_{s_k}$ are Dirac measures at the points $s_k$.
Now $f^{-1}$ is defined on a subset of $[0,l_{\widehat{\gamma}}+l]$ and is $1$-Lipschitz.
We define a curve $\gamma$ on $[0,l_{\widehat{\gamma}}+l]$ as follows.
For $t\in f([0,l_{\widehat{\gamma}}])$, let
$\gamma(t)=\widehat{\gamma}(f^{-1}(t))$.
If $t\in [0,l_{\widehat{\gamma}}+l]\setminus f([0,l_{\widehat{\gamma}}])$,
then for some $k=1,2\ldots$\,, the number $t$ belongs to an interval
of length $l_{\gah_k}$ which does not intersect
$f([0,l_{\widehat{\gamma}}])$ apart from the right end point $f(s_k)$.
Define $\gamma$ to be the curve $\gah_k$ on this interval.
Note that $\gamma$ is a $1$-Lipschitz mapping
and that $\length(\ga)
  =l_{\widehat{\gamma}}+l$.
Thus $\gamma$ is arc-length parameterized, and so it is indeed a ``curve''
in our sense.
	
Since $\widehat{\gamma}(D)\subset N$, we also get $\gamma (f(D))\subset N$.
Moreover, since $f^{-1}$ is $1$-Lipschitz,
$\Leb^1(f(D))\ge \Leb^1(D)>0$ and so
$\gamma$ travels a positive length in $N$.
Let $t:=f(\xi)\in f(D)$ and $\eps>0$.
Then by the construction of $f$, together with the density of
$\{s_k\}_{k=1}^{\infty}$ in $D$, we can find $k$ and $j_0(k)$
such that
$\lim_{k \to \infty} j_0(k)=\infty$ and
\[
|f(s_k)-t|=
|f(s_k)-f(\xi)| \le |s_k-\xi| + \sum_{j\ge j_0(k)} l_{\gah_j}
\le |s_k-\xi| +2^{1-j_0(k)} <
\eps.
\]
By the construction of the zigzagging loop
$\gah_k$, there is
a sequence $t_l\nearrow f(s_k)$
such that $\gamma(t_l)\in X\setminus U$ for
$l=1,2,\ldots$\,.
Since $\eps>0$ was arbitrary, we conclude that
$t$ is not in the interior of $\gamma^{-1}(U)$.
Thus no $t\in f(D)$ is an interior point
of $\gamma^{-1}(U)$, and since we had $\Leb^1(f(D))>0$,
$\gamma^{-1}(U)$ is not the union of a relatively open set
and a set of zero $\Leb^1$-measure.
This shows that $\ga\in\Ga$.
	
At the same time,
\[
\int_{\gamma}\rho\,ds =\int_{\widehat{\gamma}}\rho\,ds
	+\sum_{k=1}^{\infty}\int_{\gah_k}\rho\,ds
\le \int_{\widehat{\gamma}}\rho\,ds+\sum_{k=1}^{\infty}2^{-k}
=
\int_{\widehat{\gamma}}\rho\,ds+1<\infty.
\]
This contradicts the choice of $\rho$.
Thus $N$ is in fact a \p-path negligible set and we have the result
for bounded sets $U$.
	
If $U$ is \p-path almost open and unbounded, we know that each
$U\cap B(x_0,j)$
is a disjoint union of a \p-path open set $V_j$ and a \p-path negligible
set $N_j$, $j=1,2\ldots$\,, where
$x_0 \in X$ is fixed.
Now we can write $U$ as the union
\[
     U=\bigcup_{j=1}^{\infty}V_j\cup
    \bigcup_{j=1}^{\infty}N_j,
\]
where $\bigcup_{j=1}^{\infty}V_j$ is obviously \p-path open
and $\bigcup_{j=1}^{\infty}N_j$
is \p-path negligible.
\end{proof}

Finally, we obtain the following natural
characterization of measurable \p-path almost open sets.
This answers Open problem~3.4 in Bj\"orn--Bj\"orn~\cite{BBnonopen}
  in the affirmative for measurable sets, under natural assumptions.

\begin{thm} \label{thm-p-path-almost-open-char}
Assume that 
$X$ is locally compact and that $\mu$ is locally doubling
and supports a local \p-Poincar\'e inequality.
  Suppose that $U\subset X$ is measurable.
  Then $U \subset X$ is \p-path almost open  if and only if
  it can be written as 
  $U=V\cup N$, where $V$ is \p-path open and $\mu(N)=0$.
\end{thm}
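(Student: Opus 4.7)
The plan is to bootstrap from Theorem~\ref{thm-path-almost-open-char}: in the measurable case, the only upgrade needed is to replace ``\p-path negligible'' by ``measure zero'', which is exactly the content of Proposition~\ref{prop-path-negligible} provided the negligible set is known to be measurable. That measurability will be read off from the decomposition produced in the proof of Theorem~\ref{thm-path-almost-open-char}.

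For the converse direction, assume $U=V\cup N$ with $V$ \p-path open and $\mu(N)=0$. Since any set of $\mu$-measure zero is \p-path negligible (see \cite[Lemma~1.42]{BBbook}), $N$ is \p-path negligible, so Theorem~\ref{thm-path-almost-open-char} immediately yields that $U$ is \p-path almost open.

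For the forward direction, suppose $U$ is measurable and \p-path almost open. Applying Theorem~\ref{thm-path-almost-open-char} gives a decomposition $U=V\cup N$ with $V$ \p-path open and $N$ \p-path negligible; inspection of the construction in that proof shows we may take $V\subset U$ (indeed, in the bounded case $V=\{x\in U:u(x)>0\}$ where $u$ vanishes on $X\setm U$, and the unbounded case is assembled from bounded pieces), so $N=U\setm V$. Under the present assumptions, Theorem~\ref{thm-BBM-gen} identifies \p-path open sets with \p-quasiopen sets, and \p-quasiopen sets are measurable by \cite[Lemma~9.3]{BBnonopen}. Hence $V$ is measurable, and together with the measurability of $U$ this forces $N$ to be measurable. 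Proposition~\ref{prop-path-negligible} then applies to the measurable \p-path negligible set $N$ and yields $\mu(N)=0$, completing the proof.

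The proof is thus a direct assembly of earlier results in this section, and there is no real obstacle; the one point to be careful about is the inclusion $V\subset U$ from the construction in Theorem~\ref{thm-path-almost-open-char}, which ensures $N=U\setm V$ is a difference of two measurable sets rather than a potentially nonmeasurable residue.
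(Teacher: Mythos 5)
Your argument is correct and follows essentially the same route as the paper: apply Theorem~\ref{thm-path-almost-open-char}, identify $V$ as measurable via Theorem~\ref{thm-BBM-gen}, deduce $N=U\setm V$ is measurable, and invoke Proposition~\ref{prop-path-negligible}. The one place the paper differs is in getting $N=U\setm V$: rather than inspecting the construction inside the proof of Theorem~\ref{thm-path-almost-open-char}, the paper simply replaces the given $N'$ by $N:=N'\setm V$, which is still \p-path negligible (as a subset of a \p-path negligible set) and now satisfies $N=U\setm V$ by construction; this is a touch cleaner since it treats Theorem~\ref{thm-path-almost-open-char} as a black box. Also note that your stated justification ``$V\subset U$, so $N=U\setm V$'' is not quite a valid inference ($V\subset U$ is automatic from $U=V\cup N$ but does not by itself give $V\cap N=\emptyset$); what you actually verified by inspection is that $N$ is \emph{defined} as $U\setm V$ in that proof, which is what you need.
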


Under these assumptions, it follows
from Theorem~\ref{thm-BBM-gen}
  that every \p-path open set is \p-quasiopen and thus measurable.
  Hence it follows from 
  Proposition~\ref{prop-exist-nonmeas-path-almost-open-set}
  below that the measurability assumption
  in Theorem~\ref{thm-p-path-almost-open-char}
   cannot be dropped.

\begin{proof}
  If $U$ is \p-path almost open, then by
  Theorem~\ref{thm-path-almost-open-char} we know that
  it is a
  union
  $U=V\cup N'$ where $V$ is \p-path open and $N'$ is \p-path
  negligible.
Then $U=V \cup N$, where $N=N' \setm V$ is also \p-path negligible.
By Theorem~\ref{thm-BBM-gen}, $V$ is measurable.
As
$U$ is measurable by
  assumption, so is
$N=U\setminus V$.
  Thus by Proposition~\ref{prop-path-negligible}, we have $\mu(N)=0$.
  
  Conversely, if $U=V\cup N$, where $V$ is \p-path open
  and $\mu(N)=0$, then $N$ is \p-path negligible by
  \cite[Lemma~1.42]{BBbook}, and hence
  $U$ is \p-path almost open
  by Theorem~\ref{thm-path-almost-open-char}.
\end{proof}

A natural question is whether there exist nonmeasurable
\p-path almost open sets.
If there are no nonconstant rectifiable curves in $X$, as e.g.\
on the von Koch snowflake curve, then
all sets are \p-path open as well
as \p-path almost open, and thus there are plenty of
nonmeasurable
\p-path open and \p-path almost open sets.
But what can happen under natural assumptions, 
such as doubling and a Poincar\'e inequality?

First consider the $1$-dimensional case.

\begin{prop}
Let $X=\R$ be equipped with a locally doubling measure $\mu$ 
supporting a local \p-Poincar\'e inequality.
Then every \p-path almost open set $G$ is a union
of an open set and a set of measure zero, and is in particular
measurable.
\end{prop}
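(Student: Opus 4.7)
My plan is to apply the characterization of \p-path almost open sets from Theorem~\ref{thm-path-almost-open-char} and then sharpen the resulting decomposition by exploiting the one-dimensional structure. Write $G = V \cup N$ with $V$ \p-path open and $N$ \p-path negligible; applying the implication \ref{it-quasi}$\imp$\ref{it-pfine} of Theorem~\ref{thm-BBM-gen} then gives $V = W \cup Z$ with $W$ \p-finely open and $\CpX(Z)=0$, so $\mu(Z)=0$. It therefore suffices to show that $W$ is topologically open and that $\mu^*(N)=0$; the decomposition $G = W \cup (Z \cup N)$ will then exhibit $G$ as a union of an open set and a $\mu$-null set, and $G$ will be $\mu$-measurable because $\mu$ is complete.

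For the openness of $W$, I would argue by contradiction. Suppose $x \in W$ but $y_n \to x$ with $y_n \in \R \setminus W$, and set $r_n = 2|x-y_n|$. In one dimension, every admissible test function for $\capp_p^\R(\{y_n\}, B(x, 2r_n))$ must change by $1$ across an interval of length at most $3 r_n$, and the fundamental theorem of calculus together with H\"older's inequality then yield
\[
\capp_p^\R(\{y_n\}, B(x, 2r_n)) \ge c\, \capp_p^\R(B(x, r_n), B(x, 2r_n))
\]
with $c > 0$ independent of $n$, the constant depending only on local doubling data and the local structure of $\mu$. Since $\{y_n\}\subset(\R\setminus W)\cap B(x, r_n)$, the ratio in the Wiener-type criterion \eqref{deff-thin} (for $p>1$) or \eqref{deff-thin-p1} (for $p=1$) does not tend to zero, contradicting \p-thinness of $\R\setminus W$ at $x$. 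Hence $W$ is topologically open.

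For the \p-path negligible part, I would fix $L > 0$ and work with the one-parameter family of arc-length parametrized monotone curves $\gamma_a\colon[0,L]\to\R$, $\gamma_a(t)=a+t$. For each $\eps>0$, zero \p-modulus of the family of bad curves for $N$ yields a Borel $\rho_\eps\ge 0$ with $\int_\R \rho_\eps^p\,d\mu<\eps$ and $\int_a^{a+L}\rho_\eps\,dt\ge 1$ whenever $\Leb^1(\gamma_a^{-1}(N))>0$. In one dimension, local doubling together with a local \p-Poincar\'e inequality forces $\mu$ to be absolutely continuous with respect to $\Leb^1$ with a weight whose reciprocal (raised to $1/(p-1)$ when $p>1$, or to $1$ when $p=1$) is locally integrable, so H\"older's inequality gives $\int_{I'} \rho_\eps\,dt \le C(I')\,\eps^{1/p}$ on any compact interval $I'$. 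Fubini then yields $\Leb^1(\{a \in I : \gamma_a \text{ is bad}\}) \le L\, C(I')\,\eps^{1/p}$ on any compact $I$, with $I'$ the $L$-enlargement of $I$; letting $\eps\to 0$ gives $\Leb^1$-measure zero for the bad set, hence $\Leb^1(N\cap J)=0$ for every bounded interval $J$ and $\Leb^1(N)=0$. Absolute continuity of $\mu$ finally gives $\mu^*(N)=0$.

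The main obstacle is the one-dimensional regularity of $\mu$ under the hypotheses, which is needed both for the capacity comparison in the first step and for the Fubini/H\"older bound in the second. One must argue, or cite, that local doubling together with a local \p-Poincar\'e inequality on $\R$ implies $\mu$ is absolutely continuous with respect to $\Leb^1$ with a weight whose reciprocal is sufficiently locally integrable; this is standard in the 1-dimensional weighted Sobolev setting but requires unpacking the Poincar\'e inequality on explicit coordinate functions.
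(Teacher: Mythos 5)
Your approach is correct in outline but considerably more circuitous than the paper's. You route through the decomposition machinery of Theorems~\ref{thm-path-almost-open-char} and~\ref{thm-BBM-gen} and then must separately prove that the resulting $p$-finely open piece is topologically open (a Wiener-criterion argument) and that the $p$-path negligible piece has Lebesgue measure zero (a Fubini argument). The paper's argument is instead self-contained: by \cite[Theorem~1.2]{BBSpadm} one has $d\mu=w\,dx$ with $w$ a local $A_p$ weight, and the local $A_p$ condition, via essentially the same H\"older estimate you use in your Fubini step, shows that the \emph{single} identity curve $\ga\colon[-a,a]\to\R$, $\ga(t)=t$, has positive $p$-modulus. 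Since $\ga$ therefore cannot lie in the exceptional family, the very definition of $p$-path almost open already yields that $\ga^{-1}(G)=G\cap[-a,a]$ is a union of a relatively open set and a Lebesgue-null set; taking the countable union over $a=k$ then finishes, using that $\mu$ and $\Leb^1$ are mutually absolutely continuous. This dispenses with the characterization theorems (and their hypotheses) entirely. One small slip in your write-up: for $p=1$ the H\"older step requires $w^{-1}$ to be locally \emph{bounded} (the content of the local $A_1$ condition), not merely locally integrable as you state.
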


\begin{proof}
By Bj\"orn--Bj\"orn--Shanmugalingam~\cite[Theorem~1.2]{BBSpadm},
$d\mu=w\,dx$ and $w$ is a local $A_p$-weight.
Let $a>0$ and $\ga\colon [-a,a] \to \R$ with $\ga(t)=t$.
If $\rho\ge0$ is a function admissible in the definition of
$\Mod_{p,X}(\{\ga\})$ and $p>1$, then
\[
1\le \int_{\ga} \rho\,ds 
   = \int_{-a}^a \rho w^{1/p}w^{-1/p}\,dx 
   \le \biggl( \int_{-a}^a \rho^p \,d\mu\biggr)^{1/p}
   \biggl( \int_{-a}^a
   w^{-1/(p-1)}\,dx\biggr)^{(p-1)/p}.
\]
Taking infimum over all such $\rho$ and in view of   
the local $A_p$-condition \cite[(5.1)]{BBSpadm}, we see that
the single curve family $\{{\ga}\}$ 
has positive \p-modulus.
(The calculation is similar when $p=1$.)
Thus
necessarily $\ga^{-1}(G)=G \cap [-a,a]$
is a 
union of an
open set and 
a set of measure zero. 
Hence also $G=\bigcup_{k=1}^\infty (G \cap [-k,k])$
is a 
union of an open set and a set of measure zero.
\end{proof}

The same argument applies to any connected metric graph $X$
equipped with a locally
doubling measure $\mu$ supporting a local \p-Poincar\'e inequality, 
where each edge
is considered to be a 
segment.
To see this, first note that there are at most a countable number
of vertices and edges, and that $\mu(\{x\})=0$ for each $x \in X$,
see \cite[Corollary~3.9]{BBbook}.
It follows that the set of vertices has zero measure.
On each open edge, $\mu$
is given by a locally \p-admissible weight,
by \cite[Theorem~4.6]{BBSpadm},
and we can apply the argument above.

On the contrary, in higher dimensions there always
exist nonmeasurable \p-path almost open sets,
at least if we assume the continuum hypothesis.

\begin{prop} \label{prop-exist-nonmeas-path-almost-open-set}
Assume that the continuum hypothesis is true.
Let $X=\R^n$, $n \ge 2$, be equipped with a measure $d\mu=w\,dx$
such that $0 < w \in L^1\loc(\R^n)$.

Then there is a nonmeasurable dense \p-path negligible set $S$.
In particular, $S$ is a nonmeasurable dense \p-path almost open set.
\end{prop}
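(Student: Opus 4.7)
The plan is to build $S$ by transfinite recursion under CH, combining a Sierpinski-type construction (which will yield $p$-path negligibility) with a Bernstein-style splitting of positive-measure sets (which will force nonmeasurability). Since $w>0$ and $w\in L^1\loc(\R^n)$, the $\mu$-null sets coincide with the Lebesgue-null sets, so I will freely switch between $\mu$ and $\Leb^n$ when discussing nullity. Under CH there are exactly $\aleph_1$ Borel null sets and $\aleph_1$ compact sets of positive measure in $\R^n$, so I would enumerate them as $\{N_\alpha\}_{\alpha<\omega_1}$ and $\{K_\alpha\}_{\alpha<\omega_1}$ respectively.

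At stage $\alpha<\omega_1$ the union $\bigcup_{\beta\le\alpha} N_\beta$ is a countable union of null sets, hence null, while at most countably many points have been chosen in earlier stages. Thus $K_\alpha$ minus these exceptions still has positive measure, and I can pick two distinct points $x_\alpha\neq y_\alpha$ in that residue, placing $x_\alpha$ into $S$ and reserving $y_\alpha$ in a ``forbidden'' set $F$ to be avoided at all later stages as well. Letting $S=\{x_\alpha:\alpha<\omega_1\}$, density is immediate since every nonempty open set contains some compact $K_\alpha$ of positive measure and hence some $x_\alpha\in S$. The Sierpinski-type bound $|S\cap N_\alpha|\le|\alpha|\le\aleph_0$ follows because $x_\beta\notin N_\alpha$ for $\beta>\alpha$ by construction. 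Nonmeasurability then drops out in two cases: if $\mu(S)=0$ then $\mu$-measurability places $S$ inside some Borel $\mu$-null set, hence inside some $N_\alpha$, forcing $|S|\le\aleph_0$ and contradicting $|S|=\aleph_1$; if $\mu(S)>0$ then $S$ would contain some compact positive-measure set, which appears in the enumeration as some $K_\alpha$, contradicting $y_\alpha\in K_\alpha\setminus S$.

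The remaining task is to verify $p$-path negligibility of $S$, and this is where $n\ge 2$ enters essentially. Any arc-length parametrized rectifiable curve $\gamma\colon[0,l_\gamma]\to\R^n$ has compact image of finite $\Hone$-measure and therefore zero $\Leb^n$-measure, so $\gamma([0,l_\gamma])$ is a Borel $\mu$-null set and consequently $S\cap\gamma([0,l_\gamma])$ is countable by the Sierpinski bound. Writing this intersection as $\{z_k\}_k$, I would split $\gamma^{-1}(S)=\bigcup_k\gamma^{-1}(z_k)$ and argue that each $\gamma^{-1}(z_k)$ has $\Leb^1$-measure zero: arc-length parametrization forces $|\gamma'|=1$ a.e., yet at any density point $t_0$ of the closed set $\gamma^{-1}(z_k)$ at which $\gamma$ is differentiable, approaching $t_0$ along the full-density set of $h$ with $t_0+h\in\gamma^{-1}(z_k)$ would yield $\gamma'(t_0)=0$, a contradiction off a null set. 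The main obstacle is purely combinatorial bookkeeping: one must simultaneously avoid all enumerated null sets $N_\beta$ up to the current stage and all previously reserved points $y_\beta$; but since at each $\alpha<\omega_1$ both families of constraints are countable while $K_\alpha$ still has positive measure after excluding them, the recursion proceeds without obstruction.
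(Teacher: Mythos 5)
Your proof is correct, but takes a genuinely different construction from the paper's. The paper cites the classical existence of Sierpi\'nski sets from the literature (Sierpi\'nski, Morgan), then makes the set dense by taking $S=\bigcup_{q\in\Q^n}(S'+q)$, and derives nonmeasurability via a Fubini argument stated in the paragraph preceding the proof: every uncountable subset of a Sierpi\'nski set in $\R^n$, $n\ge 2$, meets each hyperplane in a countable set, hence is Lebesgue-null if measurable, hence countable. You instead build $S$ from scratch by transfinite recursion of length $\omega_1$, interleaving a Sierpi\'nski-type requirement (avoid $\bigcup_{\beta\le\alpha}N_\beta$ at stage $\alpha$) with a Bernstein-type splitting (put $x_\alpha$ in $S$, reserve $y_\alpha\in K_\alpha\setminus S$), so that density and nonmeasurability are enforced directly by the enumeration of positive-measure compacta and the two-case dichotomy $\mu(S)=0$ versus $\mu(S)>0$. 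The final step --- that a rectifiable curve image is $\Leb^n$-null for $n\ge 2$, hence meets $S$ countably, and each single-point preimage has $\Leb^1$-measure zero because the metric derivative is $1$ a.e.\ --- is the same as in the paper (Lemma~\ref{lem-zero-preim-x}), which you reprove inline. Your version is more self-contained (no external citation for Sierpi\'nski sets, no Fubini), at the cost of carrying the bookkeeping of two simultaneous transfinite constraints; the paper's version is shorter because it off-loads existence and nonmeasurability to standard facts.
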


In particular,
Proposition~\ref{prop-exist-nonmeas-path-almost-open-set}
applies to \p-admissible weights $w$,
as studied extensively in
Heinonen--Kilpel\"ainen--Martio~\cite{HeKiMa} when $p>1$.
Note that $\mu$ and the Lebesgue measure $\Leb^n$ have the same measurable sets.

We shall use Sierpi\'nski sets to prove
Proposition~\ref{prop-exist-nonmeas-path-almost-open-set}.
A \emph{Sierpi\'nski set} $S$ is an uncountable subset of $\R^n$
such that $E \cap S$ is at most countable
for every set $E$ of
Lebesgue measure $\Leb^n(E)=0$.
Such sets exist if we assume the continuum hypothesis,
see Sierpi\'nski~\cite{sierpinski}
(Proposition~$C_{26}$ in~\cite[p.~80]{sierpinski} gives the existence
for $\R$, while in the paragraph just before 
Proposition~$C_{26^{\scriptstyle a}}$
in~\cite[p.~81]{sierpinski} it is explained
how to deduce the existence for $\R^2$) and Morgan~\cite[Theorem~7, p.~86]{Morgan}
(for $\R^n$).
On the other hand, there
are other models of set theory
containing ZFC (Zermelo--Fraenkel's system plus the axiom of choice)
for which the existence of Sierpi\'nski sets fails,
e.g.\ if one adds Martin's axiom for $\aleph_1$,
see Kunen~\cite[Exercise~V.6.29]{kunen}.

Let $S \subset \R^n$, $n \ge 2$, be a Sierpi\'nski set
and 
$A \subset S$.
Then $A \cap H \subset S \cap H$ is at most countable for every hyperplane $H$.
If $A$ is 
measurable, then it 
follows from Fubini's theorem that $\Leb^n(A)=0$,
but then $A =A \cap S$ is  
at most countable. 
Thus every uncountable subset of $S$ is nonmeasurable. In
particular $S$  itself is nonmeasurable.
Conversely it is
easy to show that if
$S \subset \R^n$, $n \ge 1$, is an uncountable set 
such that every uncountable subset is nonmeasurable, then $S$ is
a Sierpi\'nski set.

In fact, there exist Sierpi\'nski sets with additional, perhaps
surprising, properties. 
For example,
Bienias--G\l \aob b--Ra\l owski--\.Zeberski~\cite[Theorem~5.5]{BienasGRZ}
have shown that in $\R^2$ there is a Sierpi\'nski set that 
intersects every line in at most two points.
(This is again assuming the continuum hypothesis.)

When proving Proposition~\ref{prop-exist-nonmeas-path-almost-open-set}
  we will need the following lemma, which is no doubt well known.
  As we have not found a good reference, we provide a short proof.

\begin{lem}  \label{lem-zero-preim-x}
Let  $\ga\colon [0,l_\ga] \to X$ be an arc-length parameterized curve.
Then
\[
   \Leb^1(\ga^{-1}(x))=0
   \quad \text{for every }x\in X.
\]
\end{lem}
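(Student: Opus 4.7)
The plan is to argue by contradiction using the metric derivative of $\gamma$. Set $E:=\gamma^{-1}(x)$ and suppose $\Leb^1(E)>0$; note that $E$ is closed by continuity of $\gamma$ and that the arc-length identity $\length(\gamma|_{[0,t]})=t$ for all $t\in[0,l_\gamma]$ makes $\gamma$ a $1$-Lipschitz map.

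The key ingredient I would invoke is the classical theorem that for a continuous rectifiable curve into any metric space, the metric derivative
\[
|\gamma'|(t):=\lim_{h\to 0}\frac{d(\gamma(t+h),\gamma(t))}{|h|}
\]
exists for a.e.\ $t$ and $\length(\gamma|_{[s,t]})=\int_s^t |\gamma'|(u)\,du$. Combined with arc-length, this forces $|\gamma'|(t)=1$ for a.e.\ $t\in[0,l_\gamma]$.

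On the other hand, by the Lebesgue density theorem, a.e.\ $t_0\in E$ is a (two-sided, and hence also one-sided) density point. At such a $t_0$, for any $\eps>0$ and all sufficiently small $h>0$ the interval $(t_0+(1-\eps)h,\,t_0+h]$ must intersect $E$, since otherwise this interval of length $\eps h$ would lie in $E^c$, contradicting right-sided density $1$. Picking $s_h$ in the intersection, the $1$-Lipschitz property combined with $\gamma(s_h)=x=\gamma(t_0)$ yields
\[
d(\gamma(t_0+h),\gamma(t_0))=d(\gamma(t_0+h),\gamma(s_h))\le |t_0+h-s_h|<\eps h,
\]
with a symmetric bound from the left. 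Letting $\eps\to 0$ shows $|\gamma'|(t_0)=0$ wherever the metric derivative exists.

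Putting the two observations together, $|\gamma'|=1$ a.e.\ on $[0,l_\gamma]$ while $|\gamma'|=0$ at a.e.\ point of $E$, which is incompatible with $\Leb^1(E)>0$. The only step requiring an external citation is the classical identity $\length(\gamma|_{[s,t]})=\int_s^t |\gamma'|(u)\,du$ for rectifiable curves in metric spaces; invoking it bypasses the delicate polygonal/partition arguments that would otherwise be needed (and which, because $\gamma^{-1}(x)$ may be a Cantor-type set rather than an interval, do not go through by a naive refinement scheme).
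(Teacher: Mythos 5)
Your proof is correct and follows essentially the same route as the paper's: both invoke the fact that the metric derivative of an arc-length parameterized curve equals $1$ a.e.\ (the paper cites Haj\l{}asz, Corollary~3.7), and both observe that at a density point of the closed set $\gamma^{-1}(\{x\})$ the metric derivative, when it exists, must be $0$. You merely phrase it as a contradiction and spell out the density-point estimate a bit more explicitly, but the two key observations and their combination are identical to the paper's argument.
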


 \begin{proof} 
The metric derivative
  	\[
  	|\dot{\gamma}|(t):=\lim_{h\to 0}\frac{d(\gamma(t+h),\gamma(t))}{|h|}
  	\]
 satisfies $|\dot{\gamma}|(t)=1$ for a.e.\ $t\in [0,l_{\gamma}]$, 
 see e.g.\ Haj\l{}asz~\cite[Corollary~3.7]{Haj03}.
 At the same time, clearly $|\dot{\gamma}|(t)=0$ at every point $t$ 
 of density one for the closed set $\ga^{-1}(\{x\})$ (provided that the limit exists),
 and thus at a.e.\ $t\in \ga^{-1}(\{x\})$.
\end{proof}

\begin{proof}[Proof of Proposition~\ref{prop-exist-nonmeas-path-almost-open-set}]
By the assumptions on the measure $\mu$, it has the
same zero sets
and the same measurable sets
as the Lebesgue measure $\Leb^n$.
As mentioned above,
there exists a  Sierpi\'nski set $S'\subset\R^n$.
It is easy to see that a countable union of Sierpi\'nski sets is a
  Sierpi\'nski set, and hence $S=\bigcup_{q \in \Q^n} (S'+q)$ is a
  dense Sierpi\'nski set.

If  $\ga\colon [0,l_\ga] \to \R^n$ is an arc-length parameterized
curve,
then $\ga([0,l_\ga]) \cap S$ is at most countable, since
$\Leb^n(\ga([0,l_\ga]))=0$.
Lemma~\ref{lem-zero-preim-x} and the countable additivity of the
Lebesgue measure $\Leb^1$
then imply that $\Leb^1(\ga^{-1}(S))=0$.
As this holds for every curve $\ga$,
the set $S$ is \p-path negligible for every $p$.
However, $S$ is nonmeasurable with respect to $\Leb^n$, and
thus
also with respect to $\mu$.
\end{proof}

We end the paper by constructing a measurable \p-path almost
open set which cannot be written as a union of an open set and a set of measure zero.
Note that the measure is not doubling and does not support a Poincar\'e inequality.

\begin{example} \label{ex-Leb+Dirac}
Let $X=\R$, equipped with the measure $\Leb^1+\de_0$,
where $\de_0$ is the Dirac measure at $0$.
Then $C_p(\{x\})\ge2$ for all $x\in X$ and hence all
quasiopen sets in $X$ are open.
The interval $[0,1)$ cannot therefore be written
as a
union of a quasiopen set and a set of measure zero.
However, it is still \p-path almost open for any 
$p\ge1$, by Lemma~\ref{lem-zero-preim-x}.

For an example with a nonatomic measure, equip  
$\R\times(\R\setm\Q)^{n-1}$ with the measure $(\Leb^1+\de_0)\times\Leb^{n-1}$
and consider $U=[0,1) \times ((0,1)\setm\Q)^{n-1}$, $n\ge2$.
\end{example}

\end{document}